\newtheorem{theorem}{Theorem}[section]
\newtheorem{lemma}[theorem]{Lemma}
\newtheorem{corollary}[theorem]{Corollary}
\theoremstyle{definition}
\theoremstyle{remark}
\newtheorem{remark}[theorem]{Remark}
\numberwithin{equation}{section}
\title{\bf  Bloch waves in crystals and periodic high contrast media}
\author{Robert Lipton\thanks{Department of Mathematics, Louisiana State University,
Baton Rouge, LA 70803, USA,
{\tt lipton@math.lsu.edu}}
\and
Robert Viator Jr.
\thanks{Department of Mathematics,
Louisiana State University,
Baton Rouge, LA 70803, USA,
{\tt rviato2@lsu.edu}}}
\date{}
\begin{document}
\maketitle
\begin{abstract}

Analytic representation formulas and power series are developed describing the band structure inside  periodic photonic and acoustic crystals made from high contrast inclusions. Central to this approach is the identification and utilization of a resonance spectrum for quasi-periodic source free modes. These modes are used to represent solution operators associated with electromagnetic and acoustic waves inside periodic high contrast media. Convergent power series for the Bloch wave spectrum is recovered from the representation formulas. Explicit conditions on the contrast are found that provide lower bounds on the convergence radius. These conditions are sufficient for the separation of spectral branches of the dispersion relation. 
\end{abstract}

\maketitle

\section{Introduction}
\label{introduction}

Recent decades have seen intense interest in wave propagation through high contrast periodic media.These materials have been studied both theoretically and experimentally and have been shown to exhibit unique optical, acoustic, and elastic properties \cite{J87}, \cite{Y}.
Here we develop analytic representation formulas and power series for dispersion relations describing wave propagation inside periodic crystals made from high contrast inclusions.  These tools are applied to investigate the propagation band structure as a function of the crystal geometry. 

Consider a Bloch wave $h(x)$ with Bloch eigenvalue $\omega^2$  propagating through a two or three dimensional crystal lattice characterized by the periodic coefficient $a(x)=a(x+p)$, $ p\in\mathbb{Z}^d$, $d=2,3$, with unit cell $Y=(0,1]^d$. The Bloch wave satisfies the differential equation,
\begin{equation}
-\nabla\cdot(a(x)\nabla h(x)) =\omega^2 h(x) ,\hbox{  $x\in\mathbb{R}^d$, $d=2,3$}
\label{Eigen1}
\end{equation}
\linebreak
together with the $\alpha$ quasi-periodicity condition $h(x+p)=h(x)e^{i\alpha\cdot p}$. Here $\alpha$ lies in the first Brillouin zone of the reciprocal lattice given by $Y^\star=(-\pi,\pi]^d$.  Equation \eqref{Eigen1} describes acoustic wave propagation through crystals and transverse magnetic (TM) wave propagation through a two dimensional photonic crystal.
We examine Bloch wave propagation through high contrast crystals made from  periodic configurations of two materials. One material occupies disjoint inclusions and is completely contained within each period cell and surrounded by the second material.
The coefficient is taken to be $1$ inside the inclusions and $k>0$ outside. The domain occupied by the union of all the inclusions $D_1,D_2,\ldots,D_n$ inside $Y$ is denoted by $D$, see figure \ref{plane}. 
The coefficient is specified on the unit period cell by $a(x)=(k \chi_{Y \setminus D} (x)+ \chi_{D}(x))$ where $\chi_{D}$ and $\chi_{Y \setminus D}$ are  indicator functions for the sets $D$ and $Y\setminus D$ and are extended by periodicity to $\mathbb{R}^d$. In this paper we consider periodic crystals made from finite collections of separated inclusions each with $C^{1,\gamma}$ boundary.


\begin{figure} 
\centering
\begin{tikzpicture}[xscale=1.0,yscale=1.0]
\draw [thick] (-2,-2) rectangle (3,3);
\draw [fill=orange,thick] (-0.2,-0.6) circle [radius=1.25];
\draw [fill=orange,thick] (2.2,2.0) circle [radius=0.6];
\node [right] at (1.9,2.0) {$D_2$};
\draw [fill=orange,thick] (-1,2.2) circle [radius=0.6];
\node [right] at (-1.3,2.2) {$D_6$};
\draw[fill=orange,thick](-1.2,1.05) ellipse (20pt and 10pt);
\node [right] at (-1.5,1.05) {$D_5$};
\draw[fill=orange,thick](2.2,-0.1) ellipse (10pt and 20pt);
\node [right] at (1.85,-0.1) {$D_4$};
\node [below] at (-0.12,-0.3) {$D_1$};
\node [below] at (0.2,2.0) {$Y\setminus D$};
\draw [fill=orange, thick] plot[ smooth cycle] coordinates{(1,0) (2,1) (1,2) (1.1,1.1)};
\node [right] at (1.1,1.0) {$D_3$};
\end{tikzpicture} 
\caption{{\bf Period Cell.}}
 \label{plane}
\end{figure}
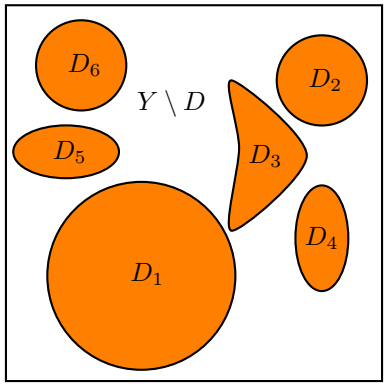

For each $\alpha\in Y^\star$ the Bloch eigenvalues $\omega^2$ are of finite multiplicity and denoted by $\lambda_j(k,\alpha)$, $j\in \mathbb{N}$. We develop power series expansions for each branch of the dispersion relation
\begin{equation}
\lambda_j(k,\alpha)=\omega^2,\hbox{ $j\in\mathbb{N}$}
\label{DispersionRelations}
\end{equation}
that are valid for $k$ in a neighborhood of infinity.

To proceed we complexify the problem and consider $k\in\mathbb{C}$.  Now $a(x)$ takes on complex values inside $Y\setminus D$ and the divergence form operator $-\nabla\cdot (k \chi_{Y \setminus D} + \chi_{D})\nabla$
is no longer uniformly elliptic. Our approach develops an explicit representation formula for $-\nabla\cdot (k \chi_{Y \setminus D} + \chi_{D})\nabla$ that holds for complex values of $k$.  We identify the subset $z={1}/{k}\in\Omega_0$ of $\mathbb{C}$ where this operator is invertible. The explicit formula shows that the solution  operator $(-\nabla\cdot (k \chi_{Y \setminus D} + \chi_{D})\nabla)^{-1}$ may be regarded more generally as a meromorphic operator valued function of $z$ for $z\in\Omega_0=\mathbb{C}\setminus S$, see section \ref{asymptotic}  and Lemma \ref{inverseoperator}. Here the set $S$ is discrete and consists of poles lying on the negative real axis with only one accumulation point at $z=-1$. For the problem treated here we expand about $z=0$ and the distance between $z=0$ and the set $S$ is used to bound the radius of convergence for the power series. The spectral representation for $-\nabla\cdot (k \chi_{Y \setminus D} + \chi_{D})\nabla$ follows from the existence of a complete orthonormal set of quasi-periodic functions associated with the {\em quasi-periodic resonances of the crystal}, i.e., quasi periodic functions $v$ and real eigenvalues $\lambda$ for which
\begin{eqnarray}
\label{sourcefree}
-\nabla\cdot (\chi_{D})\nabla v=-\lambda \Delta v.
\end{eqnarray}
These resonances are  shown to be connected to the spectra of Neumann-Poincar\'e operators  associated with quasi-periodic double layer potentials. For $\alpha=0$ these are the well known electrostatic resonances identified in \cite{BergmanES}, \cite{BergmanC}, \cite{MiltonES}, and \cite{Milton}. Both Neumann-Poincar\'e operators and associated electrostatic resonances have been the focus of  theoretical investigations \cite{Kang}, \cite{Shapero} and applied in analysis of plasmonic excitations for suspensions of noble metal particles \cite{Mayorgoz} and electrostatic breakdown \cite{JagerMosko}. The explicit spectral representation for the operator  $-\nabla\cdot (k \chi_{Y \setminus D} + \chi_{D})\nabla$ is crucial for elucidating the interaction between the contrast $k$ and the quasi-periodic resonances of the crystal, see \eqref{inverse},  \eqref{noninvertability}, and \eqref{representation}.  
The spectral representation is applied to analytically continue  the band structure $\lambda_j(k,\alpha)=\omega^2$, $j\in\mathbb{N}$, $\alpha\in Y^\star$ for $k$ onto $\mathbb{C}$, see Theorem \ref{extension}. 
Application of  the contour integral formula for spectral projections \cite{SzNagy}, \cite{TKato1}, \cite{TKato2} delivers an analytic representation formula for the band structure, see section \ref{asymptotic}.  We apply perturbation theory  in section \ref{asymptotic} together with a calculation provided in section \ref{derivation} to find an explicit formula for the radii of convergence for the power series  $\lambda_j(k,\alpha)$  about $1/k=0$. The formula shows that the radius of convergence and separation between different branches of the dispersion relation are determined by: 1) the distance of the origin to the nearest pole $z^\ast$ of $(-\nabla\cdot (k \chi_{Y \setminus D} + \chi_{D})\nabla)^{-1}$, and 2) the separation between distinct eigenvalues in the $z=1/k\rightarrow 0$ limit, see Theorems \ref{separationandraduus-alphanotzero} and \ref{separationandraduus-alphazero}. These theorems provide conditions on the contrast guaranteeing the separation of spectral bands that depend explicitly upon  $z^\ast$,  $j\in\mathbb{N}$ and $\alpha\in Y^\star$. Error estimates for series truncated after $N$ terms follow directly from the formulation. 

Next we apply these results and develop bounds on the convergence radii for a wide class of inclusions called buffered geometries. A buffered geometry  is described by any randomly distributed  collection of separated inclusions with a prescribed minimum distance of separation between inclusions, see section \ref{radiusgeneralshape}. For these geometries we demonstrate that the poles of $(-\nabla\cdot (k \chi_{Y \setminus D} + \chi_{D})\nabla)^{-1}$ associated with the quasi-periodic spectra are bounded away from the origin uniformly for $\alpha\in Y^*$. 
The quasi-periodic spectra $\{\mu_i\}_{i\in\mathbb{N}}$ associated with a buffered geometry is shown to lie inside the interval $-1/2<{\mu}^-\leq \mu_i\leq 1/2,$ for every $\alpha\in Y^\star$, see Theorem \ref{lowerboundrho} and Corollary \ref{theta}. 
The lower bound ${\mu}^-$ is independent of $\alpha\in Y^\star$ and depends explicitly on the geometry of the inclusions expressed in terms of the norm of the Dirichlet to Neumann map of each inclusion shape. This control insures that the associated poles of $(-\nabla\cdot (k \chi_{Y \setminus D} + \chi_{D})\nabla)^{-1}$ are uniformly bounded away from the origin and insures a nonzero radius of convergence for the power series representation for the band structure $\lambda_j(k,\alpha)=\omega^2$ for each $j\in\mathbb{N}$ and $\alpha\in Y^\star$, see Theorems \ref{separationandraduus-alphanotzero} and \ref{separationandraduus-alphazero}. In section \ref{radiusmultiplescatterers} we apply these observations to periodic assemblages of buffered disks. Here a buffered disk assemblage is characterized by a period filled with a randomly distributed collection of $N$ disks of equal radius separated by a minimum distance. For this case we recover explicit formulas for the radii of convergence of the power series expansion for $\lambda_j(k,\alpha)$ and explicit conditions for the separation of spectral branches in terms of the minimum distance between disks in the assemblage. It is important to emphasize that the results on separation of spectra  and convergence of power series are not asymptotic results but are valid for an explicitly delineated regime of finite contrast.

Earlier work on effective properties for periodic and stationary random media \cite{BergmanC}, \cite{MiltonCL}, and \cite{GoldenPap} show that the effective conductivity for a composite medium is an analytic function of the contrast. The effective conductivity function is seen to be nonzero and  analytic  off the negative real axis and is determined by its singularities and zeros. Estimates for effective properties are obtained from partial knowledge of the singularities and zeros.  The work of \cite{Bruno} develops power series solutions to bound the poles and zeros of the effective conductivity function. This provides bounds on the effective conductivity function for the class of inclusion geometries discussed here.

Subsequent research activity has provided insights on the frequency spectrum for high contrast periodic media. Rigorous analysis establishing existence of  band gaps in the limit of high contrast is developed in \cite{FigKuch2,FigKuch3,FigKuch1}. There an asymptotic analysis is carried out for establishing the existence of band gaps for photonic and acoustic crystals made from thin walled cubic lattices containing a low permittivity material such as air. Band gaps are shown to appear in the limit as walls become thin and the permittivity of the wall increases to infinity. The work \cite{HempelLienau}  considers a high contrast problem but with periodically distributed inclusions embedded inside a host material. The coefficient associated with the partial differential operator inside the host material is very large and the band-gap structure is analyzed in the limit when the coefficient is sent to infinity. Here the geometry is kept fixed for all values of the coefficient and it is shown asymptotically that band gaps emerge as this coefficient approaches infinity.   This phenomena is shown to be generic and holds  for a very general class of inclusion shapes. Asymptotic expansions for Bloch eigenvalues are developed and applied to this setting in \cite{AmmariKang1}. The expansions are in terms of the contrast and developed using a boundary integral perturbation approach based on the generalized Rouch\'e's theorem \cite{AmmariKang1}. The approach provides explicit asymptotic expressions for the band structure in the high contrast limit.

Along related lines the work of \cite{Zhikov}  examines the frequency spectrum of  high contrast periodic media in the high contrast sub-wavelngth limit. For periods of size $d$  the coefficient inside the included phase is proportional to  $d^2$ and   a multi branched quasi-static dispersion relation emerges for the Bloch spectra in the limit $d\rightarrow 0$, see \cite{Zhikov}. This effect is also observed for models of two dimensional electromagnetics  \cite{BouchetteFelbacq} and is responsible for the generation of artificial magnetism. Recent work applies a power series approach to recover this spectra \cite{3P}, as well as dispersion relations associated with wave transport inside plasmonic crystals \cite{3Peffective} and for periodic crystals of micro-resonators \cite{ShipmanMicroresonators}. Power series for the recovery of dispersion relations that rigorously demonstrate backwards wave behavior across selected frequency intervals are developed in \cite{ChenLipton}. In that work the electrostatic spectrum for a three phase problem is used to develop an analytic representation formula for determining the existence or non existence of pass bands. The techniques developed in the power series based approaches listed above are distinct from those developed here and instead work directly with coefficients obtained from formal power series expansions. These approaches apply majoring series techniques to establish convergence of the formal series.

The paper is organized as follows: In the next section we introduce the Hilbert space formulation of the problem and the variational formulation of the quasi-static resonance problem. The completeness of the eigenfunctions associated with the quasi-static spectrum is established and a spectral representation for the operator $-\nabla\cdot (k \chi_{Y \setminus D} + \chi_{D})\nabla$ is obtained. These results are collected and used to continue the frequency band structure into the complex plane, see  Theorem \ref{extension} of section \ref{bandstructure}. Spectral perturbation theory \cite{KatoPerturb} is applied to recover the power series expansion for Bloch spectra in section \ref{asymptotic}. The leading order spectral theory is developed for quasi-periodic $\alpha\not=0$ and periodic $\alpha=0$ problems in sections \ref{limitspectra} and \ref{limitspeczero}.  The main theorems on radius of convergence and separation of spectra given by Theorems \ref{separationandraduus-alphanotzero} and \ref{separationandraduus-alphazero}  are presented in  section \ref{radius}. The class of buffered inclusions is introduced in section \ref{radiusgeneralshape} and the explicit radii of convergence for a random suspension of buffered disks is presented in section \ref{radiusmultiplescatterers}. Explicit formulas for each term of the power series expansion is recovered and expressed in terms of layer potentials in section \ref{leading-order}. In section \ref{explicitfirstorder} the explicit formula for the first order correction in the power series is presented in the form of  the Dirichlet energy of the solution of a transmission boundary value problem. This formula follows from the layer potential representation for the first term and agrees with the first order correction obtained in the work of \cite{AmmariKang1}. The explicit formulas for the convergence radii are derived in section \ref{derivation} as well as hands on proofs of Theorems  \ref{separationandraduus-alphanotzero} and \ref{separationandraduus-alphazero} and the explicit error estimates for  the series truncated after N terms.

\section{Hilbert space setting, quasi-periodic resonances and representation formulas}
\label{layers}

We denote the spaces of all $\alpha$ quasi-periodic complex valued functions belonging to $L_{loc}^2(\mathbb{R}^d)$ by $L_{\#}^2(\alpha,Y)$ and the $L^2$ inner product over $Y$ is written
\begin{eqnarray}
(u,v)=\int_Y u\overline{v}\,dx.
\label{l2}
\end{eqnarray}
For $\alpha\not=0$ the eigenfunctions $h$ for  \eqref{Eigen1} belong to the space
\begin{equation}
H^1_{\#}(\alpha,Y) = \{ h \in  H_{loc}^1(\mathbb{R}^d): \hbox{$h$ is $\alpha$ quasiperiodic} \}.
\label{H1}
\end{equation}
\linebreak
The space  $H^1_{\#}(\alpha,Y)$  is a Hilbert space under the inner product
\begin{equation}
\langle u,v \rangle= \int_{Y} \nabla u(x) \cdot \nabla \bar{v}(x)dx.
\label{innerproduct}
\end{equation}
\linebreak
When $\alpha=0$, the pair $h(x)=1$, $\omega^2=0$ is a solution to \eqref{Eigen1}. For this case the remaining eigenfunctions associated with nonzero eigenvalues are orthogonal to  $1$ in the $L^2(Y)$ inner product.  These eigenfunctions are periodic and belong to  $L_{loc}^2(\mathbb{R}^d)$. The  set of $Y$ periodic functions with zero average over $Y$ belonging to $L_{loc}^2(\mathbb{R}^d)$ is denoted by $L^2_{\#}(0,Y)$.  The periodic eigenfunctions of \eqref{Eigen1} associated with nonzero eigenvalues  belong to the space
\begin{equation}
H^1_{\#}(0,Y) = \{ h \in  H_{loc}^1(\mathbb{R}^d): \hbox{$h$ is periodic, $\int_Y h\,dx=0$} \}.
\label{H1}
\end{equation}
\linebreak
The space  $H^1_{\#}(0,Y)$  is also Hilbert space with the inner product 
$\langle u,v \rangle$ defined by \eqref{innerproduct}.

For any $k \in \mathbb{C}$, the the variational formulation of the eigenvalue problem \eqref{Eigen1} for $h$ and $\omega^2$ is given by
\begin{eqnarray}
B_k(h,v)=\omega^2(h,v)
\label{weak}
\end{eqnarray}
 for all $v$ in $H^1_{\#}(\alpha,Y)$ where $B_k : H^1_{\#}(\alpha,Y) \times H^1_{\#}(\alpha,Y)\longrightarrow \mathbb{C}$ is the sesquilinear form
\begin{equation}
B_k(u,v) =   k\int_{Y \setminus D} \nabla u(x) \cdot \nabla \bar{v}(x)dx +  \int_{D} \nabla u(x) \cdot \nabla \bar{v}(x)dx.
\label{sesquoperator}
\end{equation}
\linebreak
The linear operator $T^\alpha_k: H^1_{\#}(\alpha,Y) \longrightarrow H^1_{\#}(\alpha,Y)$ associated with $B_k$ is defined by
\begin{equation}
\langle T^\alpha_k u, v \rangle := B_k(u,v).
\label{Threetwo}
\end{equation}

In what follows we decompose $H^1_{\#}(\alpha,Y)$ into invariant subspaces of source free modes and identify the associated quasi-periodic resonance spectra. This decomposition will provide an explicit spectral representation for the operator $T^\alpha_k$, see Theorem \ref{T_z spectrum}. We first address the case $\alpha\in Y^\star\setminus \{0\}$. Let $W_1 \subset H^1_{\#}(\alpha,Y)$ be the completion in $H^1_{\#}(\alpha,Y)$ of the subspace of functions with support away from $D$, and let $W_2 \subset H^1_{\#}(\alpha,Y)$ be the subspace of functions in $H^1_0(D)$ extended by zero into $Y$.  Clearly $W_1$ and $W_2$ are orthogonal subspaces of $H^1_{\#}(\alpha,Y)$, so define $W_3 := (W_1\oplus W_2)^{\bot}$.  We therefore have 
\begin{equation}
H^1_{\#}(\alpha,Y) = W_1 \oplus W_2 \oplus W_3.
\end{equation}
The orthogonal decomposition and integration by parts shows that elements $u \in W_3$ are harmonic separately in $D$ and $Y\setminus D$.

Now consider $\alpha=0$ and decompose $H^1_\#(0,Y)$. Let $W_1 \subset H^1_{\#}(0,Y)$ be the completion in $H^1_{\#}(0,Y)$ of the subspace of functions with support away from $D$. Here let $\tilde{H}^1_0(D)$ denote the subspace of functions $H^1_0(D)$ extended by zero into $Y\setminus D$ and let $1_Y$ be the indicator function of $Y$. We define $W_2 \subset H^1_{\#}(0,Y)$ be the subspace of functions given by 
\begin{equation}
\label{definitionW2periodic}
W_2 = \{ u = \tilde{u} - \left( \int_D \tilde{u} dx \right) 1_Y \; \mid \;  \tilde{u} \in \tilde{H}^1_0(D)\}.
\end{equation}
Clearly $W_1$ and $W_2$ are orthogonal subspaces of $H^1_{\#}(0,Y)$, and $W_3 := (W_1\oplus W_2)^{\bot}$.  As before we have 
\begin{equation}
H^1_{\#}(0,Y) = W_1 \oplus W_2 \oplus W_3
\end{equation}
and $W_3$ is identified with the subspace of $H^1_{\#}(0,Y)$ functions that are harmonic inside $D$ and $Y\setminus D$ respectively. The orthogonality between  $W_2$ and $W_3$ follows from the identity $\int_{\partial D}\partial_n w \, ds=0$ for $w\in W_3$.
We summarize with the following observation. 
\begin{lemma}
For every $\alpha\in Y^\star$, if $u \in W_3$ then $u$ is harmonic in $Y\setminus D$ and $D$ separately.
 \label{alphasplit}
 \end{lemma}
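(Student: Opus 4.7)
The claim is essentially a direct consequence of the orthogonal decomposition, so the plan is to unpack the definition of $W_3$ as the orthogonal complement of $W_1\oplus W_2$ under the gradient inner product $\langle u,v\rangle=\int_Y\nabla u\cdot\nabla\bar v\,dx$, and then extract distributional harmonicity in each subdomain by choosing appropriate test functions.

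The plan is to handle the two cases $\alpha\ne 0$ and $\alpha=0$ separately, but the logic is the same in both. Fix $u\in W_3$. To show harmonicity in $Y\setminus D$, I will use orthogonality to $W_1$: any $\varphi\in C_c^\infty(Y\setminus D)$ (extended $\alpha$-quasiperiodically, which is legitimate because the support is strictly interior to $Y\setminus D$) lies in $W_1$, so
\begin{equation*}
0=\langle u,\varphi\rangle=\int_{Y\setminus D}\nabla u\cdot\nabla\bar\varphi\,dx,
\end{equation*}
which says $\Delta u=0$ in $Y\setminus D$ in the sense of distributions, hence (by Weyl's lemma / elliptic regularity) $u$ is harmonic there.

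For harmonicity inside $D$, I would exploit orthogonality to $W_2$. In the case $\alpha\ne 0$, any $\psi\in H^1_0(D)$ extended by zero belongs to $W_2$, and the same integration by parts gives $\int_D\nabla u\cdot\nabla\bar\psi\,dx=0$ for all such $\psi$, so $\Delta u=0$ in $D$. In the case $\alpha=0$, a typical element of $W_2$ has the form $\psi=\tilde\psi-\left(\int_D\tilde\psi\,dx\right)1_Y$ with $\tilde\psi\in\tilde H^1_0(D)$; the key observation is that $\nabla\psi=\nabla\tilde\psi$ (the added constant disappears under the gradient), and $\nabla\tilde\psi$ is supported in $D$. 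Therefore
\begin{equation*}
0=\langle u,\psi\rangle=\int_D\nabla u\cdot\nabla\overline{\tilde\psi}\,dx,
\end{equation*}
and letting $\tilde\psi$ range over $H^1_0(D)$ again yields $\Delta u=0$ in $D$.

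There is no real obstacle; the only subtle point is the $\alpha=0$ case, where one must observe that the zero-mean correction term $\bigl(\int_D\tilde\psi\bigr)1_Y$ in the definition of $W_2$ is annihilated by the gradient and so does not interfere with the argument. I would present the proof as a single paragraph treating both cases, noting this observation in passing.
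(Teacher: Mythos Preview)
Your proposal is correct and follows exactly the approach the paper sketches: the paper merely remarks that ``the orthogonal decomposition and integration by parts shows that elements $u\in W_3$ are harmonic separately in $D$ and $Y\setminus D$,'' and your argument is precisely the unpacking of that sentence, including the observation for $\alpha=0$ that the constant correction in the definition of $W_2$ is killed by the gradient.
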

To set up the spectral analysis observe that Lemma \ref{alphasplit}, together with uniqueness of traces onto $\partial D$ of functions in $H^1_{\#}(\alpha,Y)$ for $\alpha \in Y^*$, implies that elements of $W_3$ can be represented in terms of single layer potentials supported on $\partial D$. We introduce the  $d$-dimensional $\alpha$-quasi-periodic Green's function, $d=2,3$
given by, see, e.g., \cite{AmmariKangLee},
\begin{equation}
G^{\alpha}(x,y) = -\sum \limits_{n \in \mathbb{Z}^d} \frac{e^{i(2\pi n + \alpha)\cdot(x-y)}}{|2\pi n +\alpha |^2}\hbox{ for $\alpha\not=0$}
\label{Greensalpha}
\end{equation}
and the periodic Green's function given by
\begin{equation}
G^{0}(x,y) = -\sum \limits_{n \in \mathbb{Z}^d\setminus\{0\}} \frac{e^{i2\pi n\cdot(x-y)}}{|2\pi n|^2}\hbox{ for $\alpha=0$}.
\label{Greensalphazero}
\end{equation}

Let $H^{1/2}(\partial D)$ be the fractional Sobelev space on $\partial D$ defined in the usual way, and denote its dual by $(H^{1/2}(\partial D))^* = H^{-1/2}(\partial D)$.  For $\rho \in H^{-1/2}(\partial D)$, and $\alpha\in Y^\star$ define the single layer potential
\begin{equation}
S_D[\rho](x) = \int_{\partial D} G^\alpha(x,y)\rho(y)d\sigma(y)\text{,           }x\in Y.
\end{equation}
It follows from \cite{CostabelBdryOps}, that for any $\rho \in H^{-1/2}(\partial D)$ 
\begin{eqnarray}
\Delta S_D\rho & = & 0\text{ in } D \text{ and } Y\setminus D,\nonumber \\
S_D\rho \mid^{+}_{\partial D} & = & S_D\rho \mid^{-}_{\partial D},\nonumber\\
\frac{\partial}{\partial \nu}S_D\rho \mid^{\pm}_{\partial D} & = & \pm \frac{1}{2}\rho + (\tilde{K}^{-\alpha}_D)^{*}\rho,
\label{singlelayer}
\end{eqnarray}
where $\nu$ is the outward directed normal vector on $\partial D$ and $(\tilde{K}^{-\alpha}_D)^{*}$ is the Neumann Poincar\'e operator defined by
\begin{equation}
(\tilde{K}^{-\alpha}_D)^{\ast}\rho(x) = \text{ p. v. } \int_{\partial D} \frac{\partial G^\alpha(x,y)}{\partial \nu(x)}\rho(y)d\sigma(y)\text{,    } x \in \partial D,
\label{neumannp}
\end{equation}
and
$\tilde{K}^{\alpha}_D$ is the Neumann Poincar\'e operator
\begin{equation}
\tilde{K}^{\alpha}_D\rho(x) = \text{ p. v. } \int_{\partial D} \frac{\partial G^\alpha(y,x)}{\partial \nu(y)}\rho(y)d\sigma(y)\text{,    } x \in \partial D.
\label{adjointneumannp}
\end{equation}

In what follows we assume the boundary  $\partial D$ is $C^{1,\gamma}$, for some $\gamma>0$.
Here the layer potentials  $\tilde{K}^{\alpha}_D$, and $(\tilde{K}^{-\alpha}_D)^*$ are continuous linear mappings from $L^2(\partial D)$ to $L^2(\partial D)$ and compact, since  $\frac{\partial G^\alpha(x,y)}{\partial \nu(x)}$ is a continuous kernel of order $d-2$ in dimensions $d=2,3$.  The operator $S_{D}$ is a continuous linear map from $H^{-1/2}(\partial D)$ into $H^1_\#(\alpha,Y)$ and we define $S_{\partial D} \rho = S_D \rho \mid_{\partial D}$ for all $\rho \in H^{-1/2}(\partial D)$.  Here $S_{\partial D} : H^{-1/2}(\partial D) \longrightarrow H^{1/2}(\partial D)$ is continuous and invertible, see \cite{CostabelBdryOps}.

One readily verifies the symmetry 
\begin{eqnarray}
G^\alpha(x,y)=G^{-\alpha}(y,x),
\label{greensymmetries}
\end{eqnarray}
and application delivers the Plemelj symmetry for $\tilde{K}^{-\alpha}$, $(\tilde{K}^{-\alpha})^*$ and $S_{\partial D}$ as operators on $L^2(\partial D)$  given by
\begin{eqnarray}
\tilde{K}^{-\alpha}S_{\partial D}=S_{\partial D}(\tilde{K}^{-\alpha})^*.
\label{Plemelj}
\end{eqnarray}
Moreover as seen in \cite{Shapero} the operator $-S_{\partial D}$ is positive and selfadjoint in $L^2(\partial D)$ and in view of \eqref {Plemelj}    $(\tilde{K}^{-\alpha}_D)^*$ is  a compact operator on  $H^{-1/2}(\partial D)$.

Let $G : W_3 \longrightarrow H^{1/2}(\partial D)$ be the trace operator, which is bounded and onto.
\begin{lemma}
$S_D: H^{-1/2}(\partial D) \longrightarrow W_3$ is a one-to-one, bounded linear map with bounded inverse $S_D^{-1} = S_{\partial D}^{-1}G$.
\end{lemma}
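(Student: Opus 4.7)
The plan is to prove each of the four claims in turn: (i) the image of $S_D$ lies in $W_3$, (ii) $S_D$ is continuous, (iii) $S_D$ is injective, and (iv) $S_D$ is surjective with the stated inverse. The continuity (ii) is immediate from the continuity $S_D : H^{-1/2}(\partial D)\to H^1_\#(\alpha,Y)$ stated just before the lemma, so the real work is in (i), (iii), and (iv).

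For (i), the function $S_D\rho$ is $\alpha$-quasi-periodic by construction of $G^\alpha$, and by \eqref{singlelayer} it is harmonic separately in $D$ and $Y\setminus D$. By Lemma \ref{alphasplit} it suffices to show $S_D\rho \perp W_1$ and $S_D\rho \perp W_2$ in the gradient inner product. For $u\in W_1$, approximate by smooth $u_n$ with support away from $\overline{D}$; then
\[
\langle S_D\rho,u_n\rangle=\int_{Y\setminus D}\nabla S_D\rho\cdot\overline{\nabla u_n}\,dx,
\]
and integration by parts in $Y\setminus D$ kills the bulk term (harmonicity), kills the trace on $\partial D$ (support of $u_n$), and kills the $\partial Y$ terms (the outward normals on opposing faces differ by a sign while the integrand is $1$-periodic because the conjugate of an $\alpha$-quasi-periodic function multiplied by an $\alpha$-quasi-periodic function is periodic). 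Passing to the limit gives orthogonality to $W_1$. For $u\in W_2$, $u$ vanishes on $\partial D$ (in the $\alpha\neq 0$ case this is immediate, and in the $\alpha=0$ case the added constant $-(\int_D\tilde u)1_Y$ contributes $-\overline{(\int_D \tilde u)}\int_{\partial D}\partial_\nu S_D\rho\,d\sigma$, which vanishes by \eqref{singlelayer} together with the fact that the total flux through $\partial D$ of a harmonic function quasi-periodic on $Y$ is zero), so integration by parts in $D$ kills the term.

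For injectivity (iii), if $S_D\rho=0$ as an element of $W_3$, then its trace on $\partial D$ vanishes, i.e.\ $S_{\partial D}\rho=0$; since $S_{\partial D}:H^{-1/2}(\partial D)\to H^{1/2}(\partial D)$ is invertible, $\rho=0$. For surjectivity (iv), given $u\in W_3$, define $\rho:=S_{\partial D}^{-1}Gu\in H^{-1/2}(\partial D)$ and set $w:=u-S_D\rho$. By (i), $w\in W_3$, and by construction $Gw=0$. In $D$, $w$ is harmonic with zero trace on $\partial D$, so $w\equiv 0$ there. In $Y\setminus D$, $w$ is harmonic with zero trace on $\partial D$; applying Green's identity,
\[
\int_{Y\setminus D}|\nabla w|^2\,dx=\int_{\partial(Y\setminus D)}\overline{w}\,\partial_\nu w\,d\sigma=0,
\]
where the $\partial Y$ terms cancel by quasi-periodicity and the $\partial D$ term vanishes because $w=0$ there. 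Hence $w$ is constant on $Y\setminus D$; the constant is forced to be $0$ either by $\alpha\neq 0$ quasi-periodicity or, for $\alpha=0$, by the zero-average constraint on $H^1_\#(0,Y)$ combined with $w=0$ on $D$. Therefore $u=S_D\rho$, and the explicit inverse is the composition $S_{\partial D}^{-1}G$, which is bounded as a composition of bounded maps.

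The main obstacle I expect is the bookkeeping in step (i) for the $\alpha=0$ case, where $W_2$ contains the correction $-(\int_D\tilde u)1_Y$ and one must verify that orthogonality still holds using the zero net flux identity $\int_{\partial D}\partial_\nu S_D\rho\,d\sigma=0$. Everything else is standard single-layer potential theory plus the quasi-periodic Green's identity.
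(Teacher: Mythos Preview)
Your proposal is correct and follows essentially the same approach as the paper. The only noteworthy difference is in the surjectivity step: where you show $w=u-S_D\rho$ vanishes by direct Green's-identity computations in $D$ and $Y\setminus D$, the paper argues more abstractly that $G(w)=0$ forces $w\in W_1\oplus W_2$, and since also $w\in W_3=(W_1\oplus W_2)^\perp$ one gets $w=0$; this is the same content packaged via the orthogonal decomposition rather than the underlying integration by parts.
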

\begin{proof}
Let $\rho \in H^{-1/2}(\partial D)$, and set $f = S_D \rho$. Then by the first equation of \eqref{singlelayer}, $f$ is harmonic in $D$ and $Y\setminus D$ separately, and so for any $v_1 \in W_1$ and $v_2 \in W_2$ we have
\begin{equation}
\langle f,v_1\rangle = 0 = \langle f, v_2 \rangle\text{.}
\end{equation}
But $W_3 = (W_1\oplus W_2)^{\bot}$, so $f = S_D\rho \in W_3$ for every $\rho \in H^{-1/2}(\partial D)$. 
\par
Now suppose $u\in W_3$, and consider $G u = u\mid_{\partial D} \in H^{1/2}(\partial D)$.  For all $x \in Y$ define $w(x) = S_D(S_{\partial D}^{-1}G u)$.  Since $u, w \in W_3$, it follows that $w-u \in W_3$ as well.  Since $G u = G w$, we have that $G (w-u) = 0$, and so $w-u \in (W_1 \oplus W_2)$. But $W_3 = (W_1\oplus W_2)^{\bot}$, so $w=u$ as desired.
\end{proof}
We introduce an auxiliary operator $T:W_3 \longrightarrow W_3$, given by the sesquilinear form
\begin{equation}
\langle Tu,v \rangle = \frac{1}{2} \int_{Y \setminus D} \nabla u(x) \cdot \nabla \bar{v}(x)dx - \frac{1}{2}\int_{D} \nabla u(x) \cdot \nabla \bar{v}(x)dx.
\label{threefourteen}
\end{equation}
The next theorem will be useful for the spectral decomposition of $T^\alpha_k$ and in the proof of Theorem \ref{reptheorem1}.
\begin{theorem}
The linear map $T$ defined in equation \eqref{threefourteen} is given by
$$T = S_D (\tilde{K}^{-\alpha}_D)^*S_D^{-1}$$
and is compact and self-adjoint.
\end{theorem}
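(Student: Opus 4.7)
The strategy is to identify $T$ with $S_D(\tilde{K}^{-\alpha}_D)^*S_D^{-1}$ at the level of sesquilinear forms on $W_3\times W_3$, then deduce compactness from the resulting factorization and self-adjointness directly from the definition. First I would fix $u,v\in W_3$ and, using the preceding lemma, write $u = S_D\rho$ with $\rho = S_D^{-1}u \in H^{-1/2}(\partial D)$. Because Lemma \ref{alphasplit} says $u$ is harmonic in $D$ and in $Y\setminus D$ separately, Green's identity reduces each of the two volume integrals in \eqref{threefourteen} to a boundary integral on $\partial D \cup \partial Y$. The boundary terms on $\partial Y$ cancel pairwise: on opposite faces the outward normals point in opposite directions while $(\partial_n u)\bar{v}$ is $Y$-periodic, since the quasi-periodic phase factors from $u$ and $\bar{v}$ cancel (and are simply periodic when $\alpha = 0$). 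After tracking the orientation of $\partial D$ as seen from $Y\setminus D$, what survives is
\begin{equation*}
\langle Tu,v\rangle = -\frac{1}{2}\int_{\partial D}\left(\frac{\partial u}{\partial \nu}\bigg|^{+} + \frac{\partial u}{\partial \nu}\bigg|^{-}\right)\bar{v}\,d\sigma,
\end{equation*}
with $\nu$ the unit outward normal to $D$.

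Next, the jump formulas in \eqref{singlelayer} applied to $u = S_D\rho$ give $\partial_\nu u|^+ + \partial_\nu u|^- = 2(\tilde{K}^{-\alpha}_D)^*\rho$, so
\begin{equation*}
\langle Tu,v\rangle = -\int_{\partial D}(\tilde{K}^{-\alpha}_D)^*\rho\,\bar{v}\,d\sigma.
\end{equation*}
Carrying out the same integration by parts on $\langle S_D\phi,v\rangle$ for an arbitrary $\phi\in H^{-1/2}(\partial D)$, and using this time the complementary jump $\partial_\nu S_D\phi|^+ - \partial_\nu S_D\phi|^- = \phi$, yields the auxiliary identity $\langle S_D\phi,v\rangle = -\int_{\partial D}\phi\,\bar{v}\,d\sigma$, interpreted in the $H^{-1/2}$/$H^{1/2}$ duality. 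Specializing $\phi = (\tilde{K}^{-\alpha}_D)^*S_D^{-1}u$ and comparing with the previous display shows $\langle S_D(\tilde{K}^{-\alpha}_D)^*S_D^{-1}u,v\rangle = \langle Tu,v\rangle$ for every $v\in W_3$, which forces the operator equality.

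For compactness I would simply combine the compactness of $(\tilde{K}^{-\alpha}_D)^*$ on $H^{-1/2}(\partial D)$ (recorded in the text just after \eqref{Plemelj}) with the boundedness of $S_D$ and $S_D^{-1}$ between $W_3$ and $H^{-1/2}(\partial D)$ supplied by the preceding lemma. Self-adjointness is visible directly from \eqref{threefourteen}: conjugating the scalar and swapping arguments leaves the form invariant, giving $\overline{\langle Tv,u\rangle} = \langle Tu,v\rangle$ and hence $\langle u,Tv\rangle = \langle Tu,v\rangle$. The main obstacle is really just bookkeeping, namely tracking the two sides of $\partial D$, the sign flip from the outward normal of $Y\setminus D$, and the quasi-periodic cancellation on $\partial Y$; once those are in hand, the jump relations in \eqref{singlelayer} do the rest of the work.
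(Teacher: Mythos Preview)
Your proof is correct and follows essentially the same approach as the paper: both arguments reduce the identification to the boundary identity $\langle Tu,v\rangle = -\int_{\partial D}(\tilde{K}^{-\alpha}_D)^*S_D^{-1}u\,\bar{v}\,d\sigma$ via Green's formula and the jump relations \eqref{singlelayer}, the only difference being that the paper starts from $\langle S_D(\tilde{K}^{-\alpha}_D)^*S_D^{-1}u,v\rangle$ and works toward $\langle Tu,v\rangle$, whereas you compute each side separately and match them through your auxiliary identity. Your treatment is in fact slightly more explicit than the paper's about the quasi-periodic cancellation on $\partial Y$ and about self-adjointness (which the paper leaves implicit from the form \eqref{threefourteen}).
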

\begin{proof}
For $u,v \in W_3$, consider
\begin{equation}
\langle S_D (\tilde{K}^{-\alpha}_D)^*S_D^{-1}u, v \rangle = \int_Y \nabla [S_D (\tilde{K}^{-\alpha}_D)^*S_D^{-1}u] \cdot \nabla \bar{v}.
\end{equation}
Since $\Delta S_D\rho = 0$ in $D$ and $Y\setminus D$ for any $\rho \in H^{-1/2}(\partial D)$, an integration by parts yields
$$\langle S_D (\tilde{K}^{-\alpha}_D)^*S_D^{-1}u, v \rangle = \int_{\partial D} \bar{v} ( \frac{\partial [S_D (\tilde{K}^{-\alpha}_D)^*S_D^{-1}u]}{\partial \nu}\mid^-_{\partial D} -  \frac{\partial [S_D (\tilde{K}^{-\alpha}_D)^*S_D^{-1}u]}{\partial \nu}\mid^+_{\partial D})d\sigma.$$
Applying the jump conditions from \eqref{singlelayer} yields
\begin{equation}
\langle S_D (\tilde{K}^{-\alpha}_D)^*S_D^{-1}u, v \rangle = -\int_{\partial D} (\tilde{K}^{-\alpha}_D)^*S_D^{-1}u\bar{v}d\sigma.
\label{Tboundary}
\end{equation}
Note that by the same jump conditions
\begin{equation}
\label{samejump}
(\tilde{K}^{-\alpha}_D)^*S_D^{-1}u = \frac{1}{2}(\frac{\partial u}{\partial \nu}|^-_{\partial D} + \frac{\partial u}{\partial \nu}|^+_{\partial D}).
\end{equation}
Application of \eqref{samejump} to equation \eqref{Tboundary} and an integration by parts yields the desired result. Compactness follows directly from the properties of 
$S_D$ and $(\tilde{K}^{-\alpha})^\ast$.
\end{proof}

Rearranging terms in the weak formulation of \eqref{sourcefree} and writing $\mu=1/2-\lambda$ delivers the equivalent eigenvalue problem for quasi-periodic electrostatic resonances.

$$\langle Tu,v \rangle = \mu\langle u,v \rangle\text{,  } u,v \in W_3.$$
Since $T$ is compact and self adjoint on $W_3$, there exists a countable subset $\{ \mu_i \}_{i\in \mathbb{N}}$ of the real line with a single accumulation point at $0$ and an associated family of orthogonal finite-dimensional projections $\{ P_{\mu_i}\}_{i\in \mathbb{N}}$ such that
$$\langle \sum_{i=1}^\infty P_{\mu_i}u, v \rangle = \langle u,v \rangle \text{,       } u,v \in W_3$$
and
$$\langle \sum_{i=1}^\infty \mu_iP_{\mu_i}u, v \rangle = \langle Tu,v \rangle \text{,       } u,v \in W_3.$$
Moreover, it is clear by \eqref{threefourteen} that 
$$-\frac{1}{2} \leq \mu_i \leq \frac{1}{2}.$$
The upper bound $1/2$ is the eigenvalue associated with the eigenfunction $\Pi\in H_{\#}^1(\alpha,Y)$ such that $\Pi=1$ in $D$ and is harmonic on $Y\setminus D$.  In section \ref{radiusgeneralshape} an explicit lower bound $\mu^-$ is identified such that the inequality $-1/2<\mu^-\leq \mu_i$,   holds for a generic class of geometries uniformly with respect to $\alpha\in Y^\star$.
\begin{lemma}
The eigenvalues $\{\mu_i\}_{i\in\mathbb{N}}$ of $T$ are precisely the eigenvalues of the Neumann-Poincar\'e operator  $(\tilde{K}^{-\alpha}_D)^*$ associated with quasi-periodic double layer potential restricted to $\partial D$. \end{lemma}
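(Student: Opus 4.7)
The plan is to exploit the factorization $T = S_D (\tilde{K}^{-\alpha}_D)^{\ast} S_D^{-1}$ established in the previous theorem, together with the fact that $S_D : H^{-1/2}(\partial D) \to W_3$ is a bounded bijection with bounded inverse. These two facts together immediately exhibit $T$ and $(\tilde{K}^{-\alpha}_D)^{\ast}$ as similar operators, so their spectra, and in particular their eigenvalues, coincide.

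More concretely, I would argue in two directions. First, suppose $u \in W_3$ satisfies $T u = \mu u$ with $u \neq 0$. Setting $\rho := S_D^{-1} u \in H^{-1/2}(\partial D)$, which is nonzero because $S_D^{-1}$ is a bijection, the factorization gives
\begin{equation*}
S_D (\tilde{K}^{-\alpha}_D)^{\ast} \rho = T u = \mu u = \mu S_D \rho,
\end{equation*}
and applying $S_D^{-1}$ on both sides yields $(\tilde{K}^{-\alpha}_D)^{\ast} \rho = \mu \rho$. Conversely, if $(\tilde{K}^{-\alpha}_D)^{\ast} \rho = \mu \rho$ with $\rho \neq 0$ in $H^{-1/2}(\partial D)$, set $u := S_D \rho \in W_3$; then $u \neq 0$ and
\begin{equation*}
T u = S_D (\tilde{K}^{-\alpha}_D)^{\ast} S_D^{-1} u = S_D (\tilde{K}^{-\alpha}_D)^{\ast} \rho = \mu S_D \rho = \mu u.
\end{equation*}
This establishes a bijection between eigenpairs of $T$ on $W_3$ and eigenpairs of $(\tilde{K}^{-\alpha}_D)^{\ast}$ on $H^{-1/2}(\partial D)$, via $u \leftrightarrow S_D^{-1} u$, so the two sets of eigenvalues coincide.

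There is no substantive obstacle, since the preceding lemma already supplies the boundedness and invertibility of $S_D : H^{-1/2}(\partial D) \to W_3$ (with inverse $S_{\partial D}^{-1} G$), and the preceding theorem supplies the factorization. The only point worth being careful about is checking that the correspondence $\rho \mapsto S_D \rho$ maps nonzero elements to nonzero elements and vice versa, which is immediate from invertibility. I would conclude by noting that this correspondence automatically preserves eigenspace dimensions, so the algebraic multiplicities agree as well, matching the spectral decomposition of $T$ with the eigenstructure of $(\tilde{K}^{-\alpha}_D)^{\ast}$ described in the discussion preceding the lemma.
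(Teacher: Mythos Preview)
Your proof is correct and follows essentially the same approach as the paper: both argue via the factorization $T = S_D (\tilde{K}^{-\alpha}_D)^{\ast} S_D^{-1}$ and the invertibility of $S_D$ to pass eigenpairs back and forth between $W_3$ and $H^{-1/2}(\partial D)$. Your formulation is in fact slightly cleaner, and the extra remark on multiplicities is a nice addition not explicitly stated in the paper.
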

\begin{proof}
If a pair $(\mu, u)$ belonging to $(-1/2, 1/2] \times W_3$ satisfies $Tu = \mu u$ then
$S_D(\tilde{K}_D^{-\alpha})^\ast S_D^{-1}u= \mu u$. Multiplication of both sides by $S_D^{-1}$ shows that $S_D^{-1}u$ is an eigenfunction for  
function for $(\tilde{K}_D^{-\alpha})^\ast$  associated with $\mu$. Suppose the pair $(\mu,w)$ belongs to $(-1/2,1/2]\times
H^{-1/2}(\partial D)$ and satisfies $(\tilde{K}_D^{-\alpha})^\ast w=\mu w$. Since the trace map from $W_3$ to $H^{1/2}(\partial D)$ is onto then there is a  $u$ in $W_3$ for which $w=S_D^{-1}u$ and  $(\tilde{K}_D^{-\alpha})^\ast S_D^{-1}u=\mu S_D^{-1}u.$
Multiplication of this identity by $S_D$ shows that $u$ is an eigenfunction for $T$ associated with $\mu$.
\end{proof}

Finally, we see that if $u_1\in W_1$ and $u_2\in W_2$, then  

$$\langle Tu_1,v \rangle = \frac{1}{2}\langle u_1,v \rangle \text{,}$$
$$\langle Tu_2, v\rangle = -\frac{1}{2}\langle u_2,v\rangle $$
for all $v\in H^1_{\#}(\alpha,Y)$.

Let $Q_1, Q_2$ be the orthogonal projections of $H^1_{\#}(\alpha,Y)$ onto $W_1$ and $W_2$ respectively, and define $P_1 := Q_1 + P_{1/2} \text{,  } P_2:= Q_2 $.  Here $P_{1/2}$ is the projection onto the one dimensional subspace spanned by the function $\Pi\in H_{\#}^1(\alpha,Y)$. Then $\{ P_1, P_2\} \cup \{ P_{\mu_i}\}_{-\frac{1}{2} < \mu_i < \frac{1}{2}}$ is an orthogonal family of projections, and
$$\langle P_1u + P_2u + \sum_{-\frac{1}{2} < \mu_i < \frac{1}{2}} P_{\mu_i}u, v\rangle = \langle u,v\rangle$$
for all $u,v \in H^1_{\#}(\alpha,Y)$.

We now recover the spectral decomposition for $T^\alpha_k$ associated with the sesqualinear form \eqref{Threetwo}.
\begin{theorem}
\label{T_z spectrum}
The linear operator $T^\alpha_k: H^1_{\#}(\alpha,Y) \longrightarrow H^1_{\#}(\alpha,Y)$ associated with the sesqualinear form $B_k$ is  is given by
$$\langle T^\alpha_k u,v \rangle = \langle k P_1u + P_2u + \sum_{-\frac{1}{2} < \mu_i < \frac{1}{2}}[k(1/2 + \mu_i) + (1/2-\mu_i)] P_{\mu_i}u, v\rangle$$
for all $u,v\in H^1_{\#}(\alpha,Y)$.
\end{theorem}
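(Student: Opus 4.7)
The plan is to exploit the orthogonal decomposition $H^1_\#(\alpha,Y)=W_1\oplus W_2\oplus W_3$ and compute the sesquilinear form $B_k$ separately on each invariant subspace. Write $u=u_1+u_2+u_3$ with $u_i\in W_i$; since the projections $Q_1,Q_2$ onto $W_1,W_2$ and the spectral projections $P_{\mu_i}$ on $W_3$ are mutually orthogonal in the $\langle\cdot,\cdot\rangle$ inner product, it suffices to verify that $\langle T^\alpha_k u_i,v\rangle$ equals the desired expression for $v\in H^1_\#(\alpha,Y)$ for each component, and then sum.

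For $u_1\in W_1$, elements have support (up to completion) away from $D$, so $\nabla u_1\equiv 0$ on $D$. Hence $B_k(u_1,v)=k\int_{Y\setminus D}\nabla u_1\cdot\nabla\bar v=k\langle u_1,v\rangle$, so $\langle T^\alpha_k u_1,v\rangle=\langle kQ_1u,v\rangle$. For $u_2\in W_2$, note that in both the $\alpha\neq 0$ case and the $\alpha=0$ case (where the constant term $-\left(\int_D \tilde u\,dx\right)\mathbf{1}_Y$ has zero gradient), we have $\nabla u_2\equiv 0$ on $Y\setminus D$. Therefore $B_k(u_2,v)=\int_D\nabla u_2\cdot\nabla\bar v=\langle u_2,v\rangle$, yielding $\langle T^\alpha_k u_2,v\rangle=\langle Q_2u,v\rangle$.

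The heart of the argument is the computation on $W_3$. Introducing the shorthand $A=\int_{Y\setminus D}\nabla u_3\cdot\nabla\bar v$ and $B=\int_D\nabla u_3\cdot\nabla\bar v$, we have $\langle u_3,v\rangle=A+B$ while by \eqref{threefourteen} $\langle Tu_3,v\rangle=\tfrac12(A-B)$. Solving for $A,B$ and substituting into $B_k(u_3,v)=kA+B$ gives the key identity
\begin{equation*}
B_k(u_3,v)=\tfrac{k+1}{2}\langle u_3,v\rangle+(k-1)\langle Tu_3,v\rangle.
\end{equation*}
Now apply the spectral decomposition of $T$ on $W_3$ established just before the theorem: $T=\sum_i \mu_i P_{\mu_i}$ and the identity operator on $W_3$ is $\sum_i P_{\mu_i}$. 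Substituting these expansions, every $u_3\in W_3$ yields
\begin{equation*}
\langle T^\alpha_k u_3,v\rangle=\sum_i\left[\tfrac{k+1}{2}+(k-1)\mu_i\right]\langle P_{\mu_i}u_3,v\rangle=\sum_i\bigl[k(\tfrac12+\mu_i)+(\tfrac12-\mu_i)\bigr]\langle P_{\mu_i}u_3,v\rangle.
\end{equation*}

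Finally, isolate the eigenvalue $\mu_i=\tfrac12$: its coefficient collapses to $k$, matching the coefficient of the $W_1$ piece. Combining with the contributions from $W_1$ and $W_2$, and using $P_1:=Q_1+P_{1/2}$ and $P_2:=Q_2$, gathers the two $k$-coefficient pieces into a single term $\langle kP_1 u,v\rangle$ and gives the stated formula. The only minor subtlety is verifying that $P_{1/2}$ (viewed inside $W_3$) is orthogonal to $Q_1$, which is automatic because $W_3\perp W_1$, so $P_1$ is indeed an orthogonal projection of $H^1_\#(\alpha,Y)$. I expect the main obstacle to be purely bookkeeping — keeping track of the two different definitions of $W_2$ for $\alpha\ne 0$ and $\alpha=0$ while ensuring $\nabla u_2$ vanishes outside $D$ in both — and the algebraic step identifying $\tfrac{k+1}{2}+(k-1)\mu_i$ with $k(\tfrac12+\mu_i)+(\tfrac12-\mu_i)$.
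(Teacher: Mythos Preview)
Your proof is correct and follows essentially the same strategy as the paper: decompose $H^1_\#(\alpha,Y)=W_1\oplus W_2\oplus W_3$, compute $B_k$ on each piece, and invoke the spectral resolution of $T$ on $W_3$. The only organizational difference is that on $W_3$ you first derive the clean identity $B_k(u_3,v)=\tfrac{k+1}{2}\langle u_3,v\rangle+(k-1)\langle Tu_3,v\rangle$ and \emph{then} insert the spectral expansion of $T$, whereas the paper works eigenspace by eigenspace, using the ratio $\int_{Y\setminus D}\nabla P_{\mu_i}u\cdot\nabla\bar v=\frac{1/2+\mu_i}{1/2-\mu_i}\int_D\nabla P_{\mu_i}u\cdot\nabla\bar v$ and then multiplying back by $(1/2-\mu_i)$. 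Your route is marginally tidier since it never divides by $1/2-\mu_i$ and hence does not need a separate caveat for the eigenvalue $\mu_i=1/2$ during the computation; the paper's route is perhaps more direct in that it never names the operator identity explicitly. Either way the content is the same.
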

\begin{proof}
For $u, v\in H^1_{\#}(\alpha,Y)$ we have
$$B_k(P_{\mu_i}u, v) = k\int_{Y \setminus D} \nabla P_{\mu_i}u \cdot \nabla \bar{v} +  \int_{D} \nabla P_{\mu_i}u \cdot \nabla \bar{v}.$$
Since $P_{\mu_i}u$ is an eigenvector corresponding to $\mu_i \neq \pm \frac{1}{2}$, we have
$$\int_{Y \setminus D} \nabla P_{\mu_i}u \cdot \nabla \bar{v} = \frac{(1/2+\mu_i)}{(1/2-\mu_i)}\int_{D} \nabla P_{\mu_i}u \cdot \nabla \bar{v}$$
and so we calculate
$$B_k(P_{\mu_i}u, v) = [k\frac{(1/2+\mu_i)}{(1/2-\mu_i)} +1]\int_{D} \nabla P_{\mu_i}u \cdot \nabla \bar{v}.$$
But we also know that
$$\int_D \nabla P_{\mu_i} u \cdot \nabla \bar{v} = (1/2-\mu_i)\int_Y \nabla P_{\mu_i} u \cdot \nabla \bar{v}$$
and so
$$B_k(P_{\mu_i}u, v) = [k(1/2 + \mu_i) + (1/2-\mu_i)] \int _Y \nabla P_{\mu_i} u \cdot \nabla \bar{v}.$$
Since we clearly have
$$B_k(P_1u, v) = k \int_{Y \setminus D} \nabla P_1u \cdot \nabla \bar{v}\text{,}$$
$$B_k(P_2u, v) = \int_{D} \nabla P_2u \cdot \nabla \bar{v}\text{,}$$
and the projections $P_1, P_2, P_{\mu_i}$ are mutually orthogonal for all $-\frac{1}{2} < \mu_i < \frac{1}{2}$, the proof is complete.
\end{proof}

It is evident that $T^\alpha_k: H^1_{\#}(\alpha,Y) \longrightarrow H^1_{\#}(\alpha,Y)$ is invertible whenever 
\begin{eqnarray} k \in \mathbb{C}\setminus Z \hbox{ where } Z=\{ \frac{\mu_i - 1/2}{\mu_i + 1/2} \}_{\{-\frac{1}{2} \leq \mu_i \leq \frac{1}{2}\}}
\label{invertability}
\end{eqnarray}
 and for $z=k^{-1}$,
\begin{eqnarray}
(T^\alpha_k)^{-1}= z P_1u + P_2u + \sum_{-\frac{1}{2} < \mu_i < \frac{1}{2}}z[(1/2 + \mu_i) + z(1/2-\mu_i)] ^{-1}P_{\mu_i}.
\label{inverse}
\end{eqnarray}
For future reference we also introduce the set $S$ of $z\in\mathbb{C}$ for which $T^\alpha_k$ is not invertible given by
\begin{eqnarray} 
S=\{ \frac{\mu_i + 1/2}{\mu_i - 1/2} \}_{\{-\frac{1}{2} < \mu_i < \frac{1}{2}\}}
\label{noninvertability}
\end{eqnarray}
which also lies on the negative real axis. In section \ref{radiusgeneralshape} we will provide explicit upper bounds on $S$ that depend upon the geometry of the inclusions.

Collecting results, the spectral representation of the operator $-\nabla\cdot(k \chi_{Y \setminus D} + \chi_{D})\nabla$ on $H^1_{\#}(\alpha,Y)$ is given by
 \begin{eqnarray}
 -\nabla\cdot (k \chi_{Y \setminus D} + \chi_{D})\nabla=-\Delta_\alpha T_k^\alpha,
 \label{representation}
 \end{eqnarray}
in the sense of linear functionals over the space $H^1_{\#}(\alpha,Y)$. Here $-\Delta_\alpha$ is the Laplace operator associated with the bilinear form $\langle\cdot,\cdot\rangle$ defined on ${H^1_{\#}(\alpha,Y)}$. This formulation is useful since it separates the effect of the contrast $k$ from the underlying geometry of the crystal.

\section{Band Structure for Complex Coupling Constant }
\label{bandstructure}

We set $\omega^2=\lambda$ in  \eqref{Eigen1} and extend the Bloch eigenvalue problem to complex coefficients $k$ outside the set $Z$ given by \eqref{invertability}. The operator representation is applied to write the Bloch eigenvalue problem as

\begin{eqnarray}
 -\nabla\cdot (k \chi_{Y \setminus D} + \chi_{D})\nabla u=-\Delta_\alpha T_k^\alpha u=\lambda u.
 \label{representationform}
 \end{eqnarray}
 
We characterize the Bloch spectra by analyzing the operator 
\begin{eqnarray}
B^\alpha(k)=(T_k^\alpha)^{-1}(-\Delta_{\alpha})^{-1},
\label{inverse}
\end{eqnarray}
where the operator $(-\Delta_{\alpha})^{-1}$ defined for all $\alpha\in Y^\ast$ is given by
\begin{eqnarray}
(-\Delta_{\alpha})^{-1} u(x)=-\int_Y G^\alpha(x,y) u(y)\,dy.
\label{inverselaplacian}
\end{eqnarray}

The operator $B^\alpha(k): L^2_{\#}(\alpha,Y) \longrightarrow H^1_{\#}(\alpha,Y)$ is easily seen to be bounded for $k\not\in Z$, see Theorem \ref{bounded}. Since $H^1_{\#}(\alpha,Y)$ embeds compactly into $L^2_{\#}(\alpha,Y)$ we find by virtue of Poincare's inequality that $B^\alpha(k)$ is a bounded compact linear operator on $L^2_\#(\alpha,Y)$ and therefore has a discrete spectrum $\{ \gamma_i(k,\alpha) \}_{i \in \mathbb{N}}$ with a possible accumulation point at $0$, see Remark \ref{compact2}. The corresponding  eigenspaces are finite dimensional and the eigenfunctions $p_i\in L^2_{\#}(\alpha,Y)$ satisfy
\begin{eqnarray}
B^\alpha(k)p_i(x)=\gamma_i(k,\alpha)p_i(x)\hbox{ for $x$ in $Y$}
\label{compactproblem}
\end{eqnarray}
and also belong to $H^1_\#(\alpha,Y)$. Note further for $\gamma_i\not =0$ that \eqref{compactproblem} holds if and only if \eqref{representationform} holds with $\lambda_i(k,\alpha)=\gamma_i^{-1}(k,\alpha)$, and $-\Delta_\alpha T_k^\alpha p_i=\lambda_i(k,\alpha) p_i.$
Collecting results we have the following theorem
\begin{theorem}
\label{extension}
Let $Z$ denote the set of points on the negative real axis defined by \eqref{invertability}. Then the Bloch eigenvalue problem \eqref{Eigen1} for the operator $-\nabla(k\chi_{Y\setminus D}+\chi_D)\nabla$ associated with the sesquilinear form \eqref{sesquoperator} can be extended for values of the coupling constant $k$ off the positive real axis into $\mathbb{C}\setminus Z$, i.e., for each $\alpha\in Y^\star$  the Bloch eigenvalues are of finite multiplicity and denoted by $\lambda_j(k,\alpha)=\gamma_j^{-1}(k,\alpha)$, $j\in \mathbb{N}$ and the band structure
\begin{equation}
\lambda_j(k,\alpha)=\omega^2,\hbox{ $j\in\mathbb{N}$}
\label{DispersionRelations}
\end{equation}
extends to complex coupling constants $k\in\mathbb{C}\setminus Z$.
\end{theorem}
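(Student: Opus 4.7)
The plan is to realize the extended Bloch eigenvalue problem as the eigenvalue problem for the compact operator $B^\alpha(k)=(T_k^\alpha)^{-1}(-\Delta_\alpha)^{-1}$ on $L^2_\#(\alpha,Y)$, and then invoke the Riesz--Schauder spectral theorem. All the structural ingredients are already assembled: the spectral representation of $T_k^\alpha$ from Theorem \ref{T_z spectrum}, its explicit inverse \eqref{inverse}, the Green's function formula \eqref{inverselaplacian} for $(-\Delta_\alpha)^{-1}$, and the decomposition \eqref{representation} of the divergence-form operator.

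First I would show that $(T_k^\alpha)^{-1}:H^1_\#(\alpha,Y)\to H^1_\#(\alpha,Y)$ is bounded for every $k\in\mathbb{C}\setminus Z$. Using the diagonal representation \eqref{inverse} on the orthogonal family $\{P_1,P_2,\{P_{\mu_i}\}\}$, the coefficients $z\big[(1/2+\mu_i)+z(1/2-\mu_i)\big]^{-1}$ are finite precisely because $z=1/k\notin S$. As $\mu_i\to 0$ these coefficients tend to $2z/(1+z)$, which is finite except at $z=-1$ (the unique accumulation point of $S$, excluded by $k\notin Z$); thus the coefficient sequence is uniformly bounded in $i$, and the operator bound follows from the orthogonality of the projections. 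The bounded map $(-\Delta_\alpha)^{-1}:L^2_\#(\alpha,Y)\to H^1_\#(\alpha,Y)$ is standard, with Poincar\'e's inequality holding on $H^1_\#(\alpha,Y)$ thanks to the quasi-periodic constraint for $\alpha\neq 0$ and the zero-mean constraint in \eqref{H1} for $\alpha=0$. Composing yields $B^\alpha(k):L^2_\#(\alpha,Y)\to H^1_\#(\alpha,Y)$ bounded.

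Second, I would compose with the compact Rellich embedding $H^1_\#(\alpha,Y)\hookrightarrow L^2_\#(\alpha,Y)$ to conclude that $B^\alpha(k)$ is a compact operator on $L^2_\#(\alpha,Y)$. The spectral theorem for compact operators on a separable Hilbert space then delivers a discrete spectrum $\{\gamma_j(k,\alpha)\}_{j\in\mathbb{N}}$ whose only possible accumulation point is $0$, with each nonzero eigenvalue of finite multiplicity. To identify these with the Bloch eigenvalues, observe that $B^\alpha(k)p=\gamma p$ with $\gamma\neq 0$ forces $p\in H^1_\#(\alpha,Y)$ and, after applying $-\Delta_\alpha T_k^\alpha$ to both sides and invoking \eqref{representation}, gives $-\nabla\cdot(k\chi_{Y\setminus D}+\chi_D)\nabla p=\gamma^{-1}p$; conversely any nontrivial Bloch solution produces an eigenfunction of $B^\alpha(k)$ with eigenvalue $1/\lambda$. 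Setting $\lambda_j(k,\alpha):=\gamma_j(k,\alpha)^{-1}$ yields the desired extension of \eqref{DispersionRelations} to $k\in\mathbb{C}\setminus Z$.

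The main technical point is the uniform boundedness of $(T_k^\alpha)^{-1}$. Pointwise finiteness of each coefficient in \eqref{inverse} is immediate from $k\notin Z$, but uniformity in $i$ requires controlling the accumulation of the quasi-periodic resonance spectrum at $\mu_i=0$; this is precisely what confines $S$ to a discrete set with sole limit point $z=-1$ and lets one avoid the singular regime by working in $\mathbb{C}\setminus Z$. Everything else in the proof is a direct application of compact operator theory.
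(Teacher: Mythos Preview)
Your proposal is correct and follows essentially the same route as the paper: define $B^\alpha(k)=(T_k^\alpha)^{-1}(-\Delta_\alpha)^{-1}$, bound $(T_k^\alpha)^{-1}$ on $H^1_\#(\alpha,Y)$ via the diagonal representation \eqref{inverse} (the paper does this in Theorem \ref{bounded}), use Poincar\'e and Rellich to obtain compactness on $L^2_\#(\alpha,Y)$, and then read off the Bloch spectrum from Riesz--Schauder together with the identification \eqref{representation}. Your explicit observation that the coefficients limit to $2z/(1+z)$ as $\mu_i\to 0$ is exactly what underlies the paper's uniform bound $M=\max\{1,|z|,\sup_i|(1/2+\mu_i)+z(1/2-\mu_i)|^{-1}\}$ in \eqref{summupperbound}.
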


\section{Power Series Representation of Bloch Eigenvalues for High Contrast Periodic Media }
\label{asymptotic}
In what follows we set $\gamma=\lambda^{-1}(k,\alpha)$ and analyze the spectral problem
\begin{equation}
B^\alpha(k) u = \gamma (k,\alpha) u
\label{forpointtwo}
\end{equation}
Henceforth we will analyze the high contrast limit by  by developing a power series in $z=\frac{1}{k}$ about $z=0$ for the spectrum of the family of operators associated with \eqref{forpointtwo}. 
$$\begin{array}{lcl}
B^{\alpha}(k) & := & (T_k^\alpha)^{-1}(-\Delta_{\alpha})^{-1}\\
& = &  (zP_1 + P_2 + z\sum_{-\frac{1}{2} < \mu_i < \frac{1}{2}}[(1/2 + \mu_i) + z(1/2-\mu_i)]^{-1} P_{\mu_i})(-\Delta_{\alpha})^{-1}\\
&=& A^\alpha(z).
\end{array}$$
Here we define the operator $A^\alpha(z)$ such that $A^\alpha(1/k)=B^\alpha(k)$ and the associated eigenvalues $\beta(1/k,\alpha)=\gamma(k,\alpha)$ and the spectral problem is $A^\alpha(z)u=\beta(z,\alpha)u$ for $u\in L^2_{\#}(\alpha,Y)$.

It is easily seen from the above representation that  $A^{\alpha}(z)$ is self-adjoint for $k\in\mathbb{R}$ and is a family of bounded operators taking $L^2_{\#}(\alpha,Y)$ into itself and we have the following:
\begin{lemma}
\label{inverseoperator}
$A^{\alpha}(z)$ is holomorphic on $\Omega_0 := \mathbb{C} \setminus S$. 
Where $S=\cup_{i\in\mathbb{N}} z_i$ is the collection of points $z_i=(\mu_i+1/2)/(\mu_i-1/2)$ on the negative real axis associated with
the eigenvalues $\{\mu_i\}_{i\in\mathbb{N}}$. The set $S$ consists of poles  of $A^\alpha(z)$ with only one accumulation point at $z=-1$.
\end{lemma}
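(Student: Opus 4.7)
My approach is to factor $A^\alpha(z) = (T_k^\alpha)^{-1}(-\Delta_\alpha)^{-1}$ and observe that $(-\Delta_\alpha)^{-1}$ is a fixed bounded operator independent of $z$, so holomorphy of $A^\alpha$ on $\Omega_0$ reduces to operator-norm holomorphy of $z \mapsto (T_k^\alpha)^{-1}$. I would use the spectral representation from Theorem \ref{T_z spectrum} to write
$$
(T_k^\alpha)^{-1} = z P_1 + P_2 + \sum_{i} c_i(z) P_{\mu_i}, \qquad c_i(z) := \frac{z}{(1/2-\mu_i)(z-z_i)},
$$
with $z_i = (\mu_i+1/2)/(\mu_i-1/2) \in S$ and the projections $\{P_1, P_2, P_{\mu_i}\}$ mutually orthogonal in $H^1_\#(\alpha,Y)$. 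Each $c_i$ is a rational function of $z$ whose only singularity is a simple pole at $z_i$. Because $T$ is compact and self-adjoint on $W_3$, the $\mu_i$ accumulate only at $0$; the identity $z_i \to -1$ as $\mu_i \to 0$ then pins down the accumulation point of $S$ and shows that each $z_i$ is a simple pole of $A^\alpha$.

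For a fixed $z_0 \in \Omega_0$ with $z_0 \ne -1$, I would pick a closed disk $\overline{D(z_0,r)} \subset \Omega_0$ bounded away from $-1$. Because the $z_i$ accumulate only at $-1$ and $z_0\notin S$, there is a uniform lower bound $|z - z_i| \ge c_1 > 0$ for all $z$ in the disk and all $i$; combined with $|1/2 - \mu_i| \ge c_2>0$ (strict since $\mu_i<1/2$ and $\mu_i\to 0$) and $|z|\le|z_0|+r$, this yields a uniform bound $|c_i(z)| \le M$ on the disk. The subtlety is that $|c_i(z)|$ does not in general tend to zero with $i$, so $\sum_i c_i(z) P_{\mu_i}$ converges only in the strong operator topology and the Weierstrass $M$-test does not directly give norm holomorphy. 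I would circumvent this by the standard criterion that a locally bounded, weakly holomorphic operator-valued function is norm holomorphic. Weak holomorphy follows because for $u,v \in H^1_\#(\alpha,Y)$ the series
$$
\langle (T_k^\alpha)^{-1} u, v\rangle = z\langle P_1 u, v\rangle + \langle P_2 u, v\rangle + \sum_i c_i(z)\langle P_{\mu_i}u,v\rangle
$$
converges uniformly on the disk by the Cauchy--Schwarz bound $\sum_i |c_i(z)||\langle P_{\mu_i}u,v\rangle| \le M\|u\|\|v\|$, yielding a uniformly convergent sum of holomorphic scalar functions. Local boundedness in operator norm follows from mutual orthogonality of the projections, which gives $\|(T_k^\alpha)^{-1}\| = \max(|z|,1,\sup_i|c_i(z)|) \le \max(|z_0|+r,1,M)$.

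The main obstacle is exactly this operator-norm-versus-strong-operator gap; once one commits to proving weak holomorphy together with a uniform operator-norm bound, the remaining verifications (the pole structure at each $z_i \in S$ and the identification of $-1$ as the unique accumulation point of $S$) are routine. A clean alternative is to split $c_i(z) = c_\infty(z) + [c_i(z)-c_\infty(z)]$ with $c_\infty(z) := 2z/(1+z)$; a short computation gives
$$
c_i(z) - c_\infty(z) = \frac{2z\mu_i(z-1)}{[(1/2+\mu_i)+z(1/2-\mu_i)](1+z)},
$$
which tends to $0$ uniformly on the disk as $\mu_i\to 0$. In this rewriting the residual series $\sum_i [c_i(z)-c_\infty(z)]P_{\mu_i}$ does converge in operator norm (Weierstrass $M$-test applies), and the bulk behaviour is absorbed into the rational term $c_\infty(z)(I-P_1-P_2)$ whose only singularity is a pole at $-1$; this decomposition bypasses the weak-holomorphy machinery.
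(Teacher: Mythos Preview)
Your proposal is correct and considerably more thorough than the paper's treatment: the paper offers no proof of this lemma, merely the preceding remark that the boundedness and structure of $A^\alpha(z)$ are ``easily seen from the above representation.'' In effect the paper treats holomorphy as immediate from the explicit spectral formula for $(T_k^\alpha)^{-1}$, without confronting the strong-versus-norm convergence issue you identify. Both of your routes are sound. One small terminological point on the second: by mutual orthogonality the operator norm of the tail $\sum_{i>N}[c_i(z)-c_\infty(z)]P_{\mu_i}$ equals $\sup_{i>N}|c_i(z)-c_\infty(z)|$, not the sum, so what you are actually using is that this supremum tends to zero uniformly on the disk (hence uniform operator-norm convergence of the partial sums), rather than a literal Weierstrass $M$-test, which would require $\sum_i|\mu_i|<\infty$. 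Also, your restriction to $z_0\neq -1$ is appropriate: when $0$ is an accumulation point of $\{\mu_i\}$ the coefficients $c_i(-1)=-1/(2\mu_i)$ are unbounded, so $A^\alpha$ is not even bounded at $-1$; the lemma's phrasing of $\Omega_0$ is slightly loose on this point, and your argument covers exactly the region where holomorphy genuinely holds.
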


In the sections \ref{radiusgeneralshape}   and \ref{radiusmultiplescatterers} we develop explicit lower bounds  $-1/2<\mu^-\leq \mu^-(\alpha)=\min_i\{\mu_i\}$, that hold for generic classes of inclusion domains $D$ and for every $\alpha\in Y^\star$. The corresponding upper bound $z^+$ on $S$ is written
\begin{eqnarray}
 \max_i \{z_i\}=\frac{\mu^-(\alpha)+1/2}{\mu^-(\alpha)-1/2}=z^\ast\leq z^+<0.
\label{bdsonS}
\end{eqnarray}

Let $\beta^\alpha_0 \in \sigma(A^{\alpha}(0))$ with spectral projection $P(0)$, and let $\Gamma$ be a closed contour in $\mathbb{C}$ enclosing $\beta^\alpha_0$ but no other $\beta \in \sigma(A^{\alpha}(0))$.
The spectral projection associated with $\beta^\alpha (z) \in \sigma(A^{\alpha}(z))$ for $\beta^\alpha(z) \in \text{int}(\Gamma)$ is denoted by $P(z)$. We write $M(z) = P(z)L^2_{\#}(\alpha,Y)$ and suppose for the moment that $\Gamma$ lies in the resolvent of $A^\alpha(z)$ and $\text{dim}(M(0)) =\text{dim}(M(z))= m$, noting that  Theorems \ref{separationandraduus-alphanotzero} and \ref{separationandraduus-alphazero}  provide explicit conditions for when this holds true.  Now define $\hat{\beta}^\alpha(z) = \frac{1}{m} \text{tr}(A^{\alpha}(z)P(z))$, the weighted mean of the eigenvalue group $\{ \beta^\alpha_1(z), \ldots \beta^\alpha_m(z) \}$ corresponding to $\beta^\alpha_1(0) = \ldots = \beta^\alpha_m(0) = \beta^\alpha_0$.   We  write the weighted mean as
\begin{equation}
\label{analyticforband}
\hat{\beta}^\alpha(z) = \beta^\alpha_0 +  \frac{1}{m} \text{tr}[(A^{\alpha}(z)-\beta^\alpha_0)P(z)].
\end{equation}
Since $A^{\alpha}(z)$ is analytic in a neighborhood of the origin we  write
\begin{equation}
A^{\alpha}(z) = A^{\alpha}(0) + \sum_{n=1}^{\infty} z^nA^{\alpha}_n.
\end{equation}
The explicit form of the sequence $\{A^{\alpha}_n\}_{n\in \mathbb{N}}$ is given in Section \ref{radius}. Define the resolvent of $A^{\alpha}(z)$ by
$$R(\zeta, z) = (A^{\alpha}(z) - \zeta )^{-1}\text{,}$$
and expanding successively in Neumann series and power series we have the identity

\begin{equation}
\begin{array}{lcl}
R(\zeta,z) & = & R(\zeta,0)[I + (A^{\alpha}(z) - A^{\alpha}(0))R(\zeta,0)]^{-1} \\
\\
& = & R(\zeta,0)+\sum_{p=1}^\infty [-(A^{\alpha}(z) - A^{\alpha}(0))R(\zeta,0)]^p\\
\\
& = & R(\zeta,0) + \sum_{n=1}^{\infty} z^nR_n(\zeta)\text{,}
\end{array}
\label{foursix}
\end{equation}

where
$$R_n(\zeta) = \sum_{k_1 + \ldots k_p = n, k_j \geq 1} (-1)^pR(\zeta,0)A^{\alpha}_{k_1}R(\zeta,0)A^{\alpha}_{k_2}\ldots R(\zeta,0)A^{\alpha}_{k_p}$$
for $n\geq 1$.

Application of  the contour integral formula for spectral projections \cite{SzNagy}, \cite{TKato1}, \cite{TKato2} delivers the expansion for the spectral projection
\begin{equation}
\begin{array}{lcl}
P(z) & = & -\frac{1}{2\pi i} \oint_{\Gamma} R(\zeta, z)d\zeta\\
\\
& = & P(0) + \sum_{n=1}^\infty z^n P_n
\end{array}
\label{Project1}
\end{equation}
where $P_n =  -\frac{1}{2\pi i} \oint_{\Gamma} R_n(\zeta)d\zeta$.  Now we develop the series for the  weighted mean of the eigenvalue group. Start with
\begin{eqnarray}
(A^{\alpha}(z) - \beta^\alpha_0)R(\zeta,z) = I + (\zeta - \beta^\alpha_0)R(\zeta,z)
\label{project2}
\end{eqnarray}
and  we have

\begin{equation}
(A^{\alpha}(z) - \beta^\alpha_0)P(z) = - \frac{1}{2 \pi i}\oint_{\Gamma} (\zeta - \beta^\alpha_0)R(\zeta,z)d\zeta \text{,}
\end{equation}
so
\begin{equation}
\hat{\beta}(z) - \beta^\alpha_0 = - \frac{1}{2m \pi i} \text{tr}\oint_{\Gamma} (\zeta - \beta^\alpha_0)R(\zeta,z)d\zeta.
\label{fourten}
\end{equation}
Equation \eqref{fourten} delivers an analytic representation formula for a Bloch eigenvalue or more generally the eigenvalue group when $\beta^\alpha_0 $ is not a simple eigenvalue.
Substituting the third line of \eqref{foursix} into \eqref{fourten} and manipulation  yields

\begin{equation}
\hat{\beta}^\alpha(z) = \beta^\alpha_0 + \sum_{n=1}^\infty z^n\beta^\alpha_n,
\label{foureleven}
\end{equation}
where
\begin{equation}
{\beta}^\alpha_n = - \frac{1}{2m \pi i} \text{tr}\sum_{k_1+\cdots+k_p=n} \frac{(-1)^p}{p}\oint_{\Gamma}A^{\alpha}_{k_1}R(\zeta,0)A^{\alpha}_{k_2}\ldots R(\zeta,0)A^{\alpha}_{k_p}R(\zeta,0)d\zeta;\hbox{ $n\geq 1$}.
\label{fourtwelve}
\end{equation}

\section{Spectrum in the High Contrast Limit: Quasi-periodic Case}
\label{limitspectra}
We now identify the spectrum of the limiting operator $A^{\alpha}(0)$ when $\alpha \neq 0$.  Using the representation
\begin{equation}
A^{\alpha}(z) = (zP_1 + P_2 + z\sum \limits_{-\frac{1}{2} < \mu_i < \frac{1}{2}} [(1/2 + \mu_i) + z(1/2-\mu_i)]P_{\mu_i})(-\Delta_{\alpha})^{-1} \text{,}
\end{equation}
we see that
\begin{equation}
A^{\alpha}(0) = P_2(-\Delta_{\alpha})^{-1}.
\end{equation}
Denote the spectrum of $A^{\alpha}(0)$ by $\sigma(A^{\alpha}(0))$. The following theorem provides the explicit characterization of $\sigma(A^{\alpha}(0))$. 

\begin{theorem}
\label{equiv1}
Let $-\Delta_D$ be the negative Laplacian with zero Dirichlet boundary conditions on $\partial D$ with inverse $-\Delta^{-1}_D:\,L^2(D)\rightarrow L^2(D)$.  Denote the spectrum of $-\Delta^{-1}_D$ by $\sigma(-\Delta^{-1}_D)$.  Then $\sigma(A^{\alpha}(0)) = \sigma(-\Delta^{-1}_D)$.
\end{theorem}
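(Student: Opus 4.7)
The plan is to exhibit an explicit bijection between the nonzero eigenvalues of $A^\alpha(0)=P_2(-\Delta_\alpha)^{-1}$ and the nonzero eigenvalues of $(-\Delta_D)^{-1}$, with $0$ belonging to both spectra because both operators are compact on infinite dimensional Hilbert spaces. Since the spectrum of a compact operator is the closure of the set of its eigenvalues together with $0$, matching the nonzero eigenvalues (and noting both contain $0$) is enough.

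First I would observe that if $A^\alpha(0)u=\beta u$ with $\beta\neq 0$, then $u=\beta^{-1}P_2(-\Delta_\alpha)^{-1}u$ lies in $W_2$, hence is the extension by zero of a function in $H^1_0(D)$. Set $v=(-\Delta_\alpha)^{-1}u$, which is the $\alpha$-quasi-periodic solution of $-\Delta v=u$ in $Y$. For any test function $\phi\in W_2$ one has
\begin{equation*}
\langle P_2 v,\phi\rangle = \langle v,\phi\rangle = \int_Y \nabla v\cdot\nabla\overline{\phi}\,dx = \int_D \nabla v\cdot\nabla\overline{\phi}\,dx,
\end{equation*}
since $\phi$ vanishes outside $D$. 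Integrating by parts, using $\phi|_{\partial D}=0$ and $-\Delta v=u$ in $D$, gives
\begin{equation*}
\langle P_2 v,\phi\rangle=\int_D u\overline{\phi}\,dx.
\end{equation*}
The eigenvalue equation $P_2 v=\beta u$ therefore reads $\int_D u\overline{\phi}\,dx = \beta\int_D \nabla u\cdot\nabla\overline{\phi}\,dx$ for every $\phi\in H^1_0(D)$. This is precisely the weak formulation of $-\Delta_D u=\beta^{-1}u$ in $D$ with Dirichlet boundary conditions, so $u\in L^2(D)$ is an eigenfunction of $(-\Delta_D)^{-1}$ with eigenvalue $\beta$.

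Conversely, given an eigenfunction $w\in H^1_0(D)$ of $-\Delta_D$ with eigenvalue $\lambda$, let $\tilde w$ be its extension by zero; then $\tilde w\in W_2$. Let $v=(-\Delta_\alpha)^{-1}\tilde w$. For any $\phi\in W_2$, the same integration by parts gives $\langle P_2 v,\phi\rangle=\int_D w\overline{\phi}\,dx$, while
\begin{equation*}
\langle \lambda^{-1}\tilde w,\phi\rangle=\lambda^{-1}\int_D\nabla w\cdot\nabla\overline{\phi}\,dx=\lambda^{-1}\cdot\lambda\int_D w\overline{\phi}\,dx=\int_D w\overline{\phi}\,dx,
\end{equation*}
so $P_2 v=\lambda^{-1}\tilde w$ in $W_2$. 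Since $A^\alpha(0)$ already maps into $W_2$, this yields $A^\alpha(0)\tilde w=\lambda^{-1}\tilde w$, showing that every nonzero element of $\sigma(-\Delta_D^{-1})$ lies in $\sigma(A^\alpha(0))$.

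Finally I would note that $(-\Delta_D)^{-1}$ is compact on $L^2(D)$ and, as remarked in Section \ref{bandstructure}, $A^\alpha(0)$ is compact on $L^2_\#(\alpha,Y)$; both spaces are infinite dimensional, so $0$ belongs to both spectra. Combining this with the eigenvalue correspondence above gives $\sigma(A^\alpha(0))=\sigma(-\Delta_D^{-1})$. The main subtlety—and the only step that requires care—is recognizing that membership of $u$ in the range of $P_2$ forces $u\in H^1_0(D)$ extended by zero, so that the boundary term in the integration by parts vanishes; without this, the identification with the Dirichlet Laplacian would fail.
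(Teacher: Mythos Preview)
Your argument is correct and follows essentially the same route as the paper: you identify eigenfunctions of $A^\alpha(0)$ with elements of $W_2=\tilde H^1_0(D)$, use the defining relation $\langle(-\Delta_\alpha)^{-1}u,\phi\rangle=(u,\phi)_{L^2(Y)}$ (which you phrase as an integration by parts) to reduce to the weak Dirichlet problem on $D$, and then read off the identification with $\sigma(-\Delta_D^{-1})$. Your treatment is slightly more complete than the paper's, in that you spell out the converse direction and explicitly address the point $0$ via compactness, but the underlying mechanism is identical.
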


To  establish the theorem we first  show that the eigenvalue problem
$$P_2(-\Delta_{\alpha})^{-1}u=\lambda u$$ with $\lambda \in \sigma(A^{\alpha}(0))$ and eigenfunction $u\in L^2_\#(\alpha,Y)$ is  equivalent to finding $\lambda$ and $u\in W_2$ for which
\begin{equation}
(u, v)_{L^2(Y)} = \lambda \langle u, v \rangle, \hbox{ for all $v\in W_2$}.
\label{eqiveigenH01}
\end{equation}
To conclude we  will then show that the set of eigenvalues for \eqref{eqiveigenH01} is given by $\sigma(-\Delta^{-1}_D)$. 
To see the equivalence note that we have $u=P_2u$ and for $v \in H^1_{\#}(\alpha,Y)$,
\begin{equation}
\label{fivefour}
\begin{array}{lcl}
\langle P_2(-\Delta_{\alpha})^{-1}u, v \rangle =  \lambda \langle u, v \rangle=\lambda \langle P_2u, v \rangle \end{array}
\end{equation}
hence
\begin{equation}
\label{fivefour2}
\begin{array}{lcl}
\langle (-\Delta_{\alpha})^{-1}u, P_2v\rangle= \lambda \langle u, P_2v \rangle.
\end{array}
\end{equation}
Since $\langle (-\Delta_{\alpha})^{-1}u, v\rangle = \int_Y u\overline{v}\,dx=(u, v)_{L^2(Y)}$ for any $u\in L^2_\#(\alpha,Y)$ and $v \in H^1_{\#}(\alpha,Y)$, equation  \eqref{fivefour2} becomes
\begin{equation}
\label{projectedalpnanotzero}
(u, P_2v)_{L^2(Y)} = \lambda \langle u, P_2v \rangle,
\end{equation}
and the equivalence follows noting that $P_2$ is the  projection of $H^1_{\#}(\alpha,Y)$ onto $W_2$. 

To conclude we  show that the set of eigenvalues for \eqref{eqiveigenH01} is given by $\sigma(-\Delta^{-1}_D)$. Note that $P_2v$ is supported in $D$, so
\begin{equation}
\lambda^{-1} \int \limits_{D} u\overline{P_2v} = \int \limits_{D} \nabla u \cdot \nabla \overline{P_2v}.
\end{equation}
Now since $P_2: H^1_{\#}(\alpha,Y) \rightarrow W_2=\tilde{H}_0^1(D)$ is onto, it follows that $\lambda^{-1}$ is a Dirichlet eigenvalue of the negative Laplacian acting on $D$ and the proof of Theorem \ref{equiv1} is complete.

\section{Spectrum in the High Contrast Limit: Periodic Case}
\label{limitspeczero}

Recall for the periodic case $P_2$ is the projection onto  $W _2$ given by \eqref{definitionW2periodic} and the limiting operator $A^0(0)$ is written
\begin{equation}
A^0(0) = P_2(-\Delta_0)^{-1}.
\end{equation}
Here the operator $(-\Delta_0)^{-1}$ is  compact and self-adjoint on $L^2_{\#}(0,Y)$ and given by
\begin{equation}
(-\Delta_0)^{-1}u(x) = -\int \limits_Y G^0(x,y)u(y) dy.
\end{equation}
Denote the spectrum of $A^{0}(0)$ by $\sigma(A^{0}(0))$. To characterize this spectrum we introduce the sequence of numbers $\{\nu_j\}_{j\in\mathbb{N}}$ given by the positive roots $\nu$ of the  spectral function $S(\nu)$ defined by
\begin{equation}
S(\nu)=\nu\sum \limits_{i\in \mathbb{N}} \frac{ a^2_i}{\nu-\delta^*_i}-1,
\label{roots}
\end{equation}
where $\{\delta^*_j\}_{j\in\mathbb{N}}$ are the Dirichlet eigenvalues for $-\Delta_D$ associated with eigenfunctions $\psi_j$ for which $\int_D\psi_j\,dx\not=0$ and $a_j=|\int_D\psi_j\,dx|$. 
The following theorem provides the explicit characterization of $\sigma(A^{\alpha}(0))$. 
\begin{theorem}
\label{equiv2}
Let $\{\delta'_j\}_{j\in\mathbb{N}}$ denote the collection of Dirichlet eigenvalues  for $-\Delta_D$ associated with eigenfunctions $\psi_j$ for which $\int_D\psi_j\,dx=0$. 
Then $\sigma(A^{0}(0)) = \{{\delta'_j}^{-1}\}_{j\in\mathbb{N}}\cup\{{\nu_j}^{-1}\}_{j\in\mathbb{N}}$.
\end{theorem}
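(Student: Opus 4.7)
\textbf{Proof plan for Theorem \ref{equiv2}.} The plan is to mirror the strategy of Theorem \ref{equiv1}: first reduce $A^0(0)u=\lambda u$ to a generalized eigenvalue problem on $W_2$, then diagonalize in the Dirichlet eigenbasis of $-\Delta_D$. The new feature in the periodic case is that $W_2$ is not $\tilde{H}^1_0(D)$ itself but the subspace \eqref{definitionW2periodic} obtained by subtracting the mean-over-$D$ times $1_Y$. This mean-correction couples all Dirichlet eigenmodes whose eigenfunctions have nonzero mean over $D$, and that coupling is precisely what produces the scalar characteristic equation $S(\nu)=0$.

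First, arguing as in \eqref{fivefour}--\eqref{projectedalpnanotzero} and using $\langle(-\Delta_0)^{-1}u,w\rangle=(u,w)_{L^2(Y)}$ together with the surjectivity of $P_2$ onto $W_2$, the equation $P_2(-\Delta_0)^{-1}u=\lambda u$ (with $\lambda\ne 0$) forces $u=P_2u\in W_2$ and the weak form
\[(u,v)_{L^2(Y)}=\lambda\langle u,v\rangle\quad\text{for all}\ v\in W_2.\]
Writing $u=\tilde{u}-a_u 1_Y$ and $v=\tilde{v}-b_v 1_Y$ with $\tilde{u},\tilde{v}\in\tilde{H}^1_0(D)$, $a_u=\int_D\tilde{u}\,dx$, $b_v=\int_D\tilde{v}\,dx$, direct computation using $\nabla 1_Y=0$ (since the periodic extension of $1_Y$ is constant) gives $\langle u,v\rangle=\int_D\nabla\tilde{u}\cdot\nabla\overline{\tilde{v}}\,dx$ and $(u,v)_{L^2(Y)}=\int_D\tilde{u}\,\overline{\tilde{v}}\,dx-a_u\overline{b_v}$. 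Hence the problem reduces to finding $\lambda$ and $\tilde{u}\in H^1_0(D)$ with
\[\int_D\tilde{u}\,\overline{\tilde{v}}\,dx-\Bigl(\int_D\tilde{u}\,dx\Bigr)\overline{\Bigl(\int_D\tilde{v}\,dx\Bigr)}=\lambda\int_D\nabla\tilde{u}\cdot\nabla\overline{\tilde{v}}\,dx\qquad\text{for all}\ \tilde{v}\in H^1_0(D).\]

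Now expand $\tilde{u}=\sum_j c_j\psi_j$ in the $L^2(D)$-orthonormal Dirichlet eigenbasis $-\Delta\psi_j=\delta_j\psi_j$ and test against $\tilde{v}=\psi_k$; using $\int_D\nabla\psi_j\cdot\nabla\psi_k\,dx=\delta_j\delta_{jk}$ one obtains the infinite linear system $c_k-a_k\sum_j c_j a_j=\lambda\delta_k c_k$ with $a_j:=\int_D\psi_j\,dx$. When $a_k=0$ (so $\delta_k=\delta'_k$) the equation decouples to $(1-\lambda\delta'_k)c_k=0$, producing the eigenvalue $\lambda=1/\delta'_k$ with eigenfunction $\psi_k$ (already zero-mean, hence in $W_2$). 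When $a_k\ne 0$ (so $\delta_k=\delta^*_k$), put $S:=\sum_j c_j a_j$; then $(1-\lambda\delta^*_k)c_k=a_k S$. The possibility $S=0$ would force $c_k=0$ unless $\lambda=1/\delta^*_k$, but then the one excited coefficient would give $c_k a_k\ne 0$, contradicting $S=0$; so $S\ne 0$. Dividing by $S$ and summing the relation multiplied by $a_k$ yields $\sum_k a_k^2/(1-\lambda\delta^*_k)=1$, which under $\nu=1/\lambda$ is exactly $S(\nu)=0$ of \eqref{roots}.

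The step I expect to require the most care is verifying that each positive root $\nu_j$ of $S(\nu)$ really produces a genuine $L^2_\#(0,Y)$ eigenfunction: one has to check that the candidate series $\tilde{u}=\sum_k a_k(1-\delta^*_k/\nu_j)^{-1}\psi_k$ converges in $H^1_0(D)$. This follows from $\sum_k a_k^2=\|1_D\|_{L^2(D)}^2<\infty$, Weyl-type growth of $\delta^*_k$, and the fact that the weights $(1-\delta^*_k/\nu_j)^{-1}$ behave like $-\nu_j/\delta^*_k$ as $k\to\infty$, so the $H^1$-norm squared of the partial sums is controlled by $\sum_k \delta^*_k a_k^2 (1-\delta^*_k/\nu_j)^{-2}$, which is summable. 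Once convergence is established, the two disjoint families $\{1/\delta'_j\}_{j\in\mathbb{N}}$ and $\{1/\nu_j\}_{j\in\mathbb{N}}$ exhaust $\sigma(A^0(0))\setminus\{0\}$, and the identification claimed in the theorem is complete.
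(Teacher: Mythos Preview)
Your proposal is correct and follows the same route as the paper: reduce $A^0(0)u=\lambda u$ to the weak form $(u,v)_{L^2(Y)}=\lambda\langle u,v\rangle$ on $W_2$, split according to whether $\int_D\tilde u\,dx$ vanishes, and in the nonzero-mean case derive the scalar equation $S(\nu)=0$ by expanding in the Dirichlet eigenbasis. The only differences are cosmetic---the paper first writes the inhomogeneous PDE $\Delta\tilde u+\nu\tilde u=-\nu\gamma$ before expanding, and your explicit $H^1$-convergence check for the candidate eigenfunction at each root $\nu_j$ is a point the paper leaves implicit.
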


To  establish the theorem we proceed as before  to see that the eigenvalue problem $$P_2(-\Delta_0)^{-1}u=\lambda u$$ with $\lambda \in \sigma(A^{0}(0))$ and eigenfunction $u\in L^2_\#(0,Y)$ is  equivalent to finding $\lambda$ and $u\in W_2$ for which
\begin{equation}
(u, v)_{L^2(Y)} = \lambda \langle u, v \rangle, \hbox{ for all $v\in W_2$}.
\label{eqiveigenW2}
\end{equation}

To conclude we  show that the set of eigenvalues for \eqref{eqiveigenW2} is given by 
$\{{\delta'_j}^{-1}\}_{j\in\mathbb{N}}\cup\{\nu_j^{-1}\}_{j\in\mathbb{N}}$.
We see that $u\in W_2$ and from \eqref{definitionW2periodic} we have the dichotomy: $\int_D \tilde{u}dx=0$ and $u=\tilde{u}\in \tilde{H}^1_0(D)$ or  $\int_D \tilde{u}dx\not=0$ and $u=\tilde{u}-\gamma 1_Y$ with $\gamma=\int_D\tilde{u} dx$.  It is evident for the first case that the eigenfunction $u\in\tilde{H}^1_0(D)$ and for $v\in W_2$ given by 
\begin{equation}
\label{definitionW2periodicdefine}
v = \tilde{v} - \left( \int_D \tilde{v} dx \right) 1_Y \; \hbox{for} \;  \tilde{v} \in \tilde{H}^1_0(D)
\end{equation}
the problem \eqref{eqiveigenW2} becomes
\begin{equation}
\int_Du\overline{\tilde{v}} = \lambda \int_D\nabla u\cdot\nabla\overline{\tilde{v}}, \hbox{ for all $\tilde{v}\in \tilde{H}^1_0(D)$},
\label{eqiveigenW2first}
\end{equation}
and we conclude that $\tilde{u}$ is a Dirichlet eigenfunction with zero average over $D$ so $\lambda\in\{{\delta'_j}^{-1}\}_{j\in\mathbb{N}}$. While for the second, we have $u\in W_2$ and again
\begin{equation}
\int_Du\overline{\tilde{v}} = \lambda \int_D\nabla u\cdot\nabla\overline{\tilde{v}}, \hbox{ for all $\tilde{v}\in \tilde{H}^1_0(D)$}.
\label{eqiveigenW2nd}
\end{equation}
Writing $u=\tilde{u}-\gamma 1_Y$ and integration by parts in \eqref{eqiveigenW2nd} shows that $\tilde{u}\in\tilde{H}^1_0(D)$ is the solution of
\begin{eqnarray}
\Delta \tilde{u}+\nu \tilde{u}=-\nu\gamma \hbox{ for $x\in D$}.
\label{eigenw2}
\end{eqnarray}
We normalize  $\tilde{u}$ so that $\gamma = \int_D\tilde{u}dx=1$  and write
\begin{equation}
\label{series}
\tilde{u} = \sum \limits_{j=1}^{\infty} c_j \psi_j
\end{equation}
where, $\psi_j$ are the Dirichlet eigenfunctions of $-\Delta_D$ associated with eigenvalue $\delta_j$ extended by zero to $Y$.  Substitution of \eqref{series} into \eqref{eigenw2} gives
\begin{equation}
\label{fourierequation}
\sum \limits_{j=1}^{\infty} (-\delta_j + \nu) c_j \psi_j = -\nu.
\end{equation}
Multiplying both sides of \eqref{fourierequation} by $\overline{\psi_k}$ over $Y$ and orthonormality of $\{\psi_j\}_{j \in \mathbb{N}}$, shows that $\tilde{u}$ is given by
\begin{equation}
\label{finalformula}
\tilde{u} = \nu \sum \limits_{k\in \mathbb{N}} \frac{ \int_D \overline{{\psi}_k}}{\nu-\delta^*_k} \psi_k\text{,}
\end{equation}
where $\delta_k^*$ correspond to Dirichlet eigenvalues associated with eigenfunctions for which $\int_D\,\psi_k\,dx\not=0$. 
To calculate $\nu$, we integrate both sides of \eqref{finalformula} over $D$ to recover the identity
\begin{equation}
\nu \sum \limits_{k\in \mathbb{N}} \frac{ a^2_k}{\nu-\delta^*_k}-1=0.
\label{theIdentity}
\end{equation}
It follows from  \eqref{theIdentity} that $\lambda\in\{\nu_i^{-1}\}_{i\in\mathbb{N}}$ and the proof of Theorem \ref{equiv2} is complete.

\section{Radius of Convergence and Separation of Spectra}
\label{radius}

Fix an inclusion geometry specified by the domain $D$. Suppose first $\alpha\in Y^\star$ and $\alpha\not =0$. Recall from Theorem \ref{equiv1}  that the spectrum of $A^\alpha(0)$ is $\sigma(-\Delta_D^{-1})$. Take $\Gamma$ to be a closed contour in $\mathbb{C}$ containing an eigenvalue  $\beta^\alpha_j(0)$ in $\sigma(-\Delta^{-1}_{D})$ but no other element of $\sigma(-\Delta^{-1}_{D})$, see Figure \ref{spectrum}. Define $d$ to be the distance between $\Gamma$ and $ \sigma(-\Delta^{-1}_{D})$, i.e., 
\begin{eqnarray}
d={\rm{dist}}(\Gamma,\sigma(-\Delta^{-1}_{D})=\inf_{\zeta\in\Gamma}\{{\rm{dist}}(\zeta,\sigma(-\Delta^{-1}_{D})\}.
\label{dist}
\end{eqnarray}
The component of the spectrum of $A^\alpha(0)$ inside $\Gamma$  is precisely $\beta^\alpha_j(0)$ and we denote this   by $\Sigma'(0)$. The part of the spectrum of $A^\alpha(0)$ in the  domain exterior to $\Gamma$ is denoted by $\Sigma''(0)$ and $\Sigma''(0)=\sigma(-\Delta^{-1}_{D})\setminus \beta^\alpha_j(0)$. The invariant subspace of $A^\alpha(0)$ associated with $\Sigma'(0)$ is denoted by $M'(0)$ with $M'(0)=P(0)L^2_{\#}(\alpha,Y)$ .

\begin{figure} 
\centering
\begin{tikzpicture}[xscale=0.70,yscale=0.70]
\draw [-,thick] (-6,0) -- (6,0);
\draw [<->,thick] (0,0) -- (1.5,0);
\node [above] at (1,0) {$d$};
\draw [<->,thick] (1.5,0) -- (3,0);
\node [above] at (2,0) {$d$};
\draw (-4,0.2) -- (-4.0, -0.2);
\node [below] at (-4,0) {$\beta^\alpha_{j-1}(0)$};
\draw (0,0.2) -- (0, -0.2);
\node [below] at (0,0) {$\beta^\alpha_j(0)$};
\draw (3,0.2) -- (3, -0.2);
\node [below] at (3,0) {$\beta^\alpha_{j+1}(0)$};
\draw (0,0) circle [radius=1.5];
\node [right] at (1.1,1.1) {$\Gamma$};
\end{tikzpicture} 
\caption{$\Gamma$}
 \label{spectrum}
\end{figure}

Suppose the lowest quasi-periodic resonance eigenvalue for the domain $D$ lies inside $-1/2<\mu^-(\alpha)<0$. It is noted that in the sequel a large and generic class of domains are identified for which $-1/2<\mu^-(\alpha)$.  The corresponding upper bound on the set $z\in S$ for which $A^\alpha(z)$ is not invertible  is given by 
\begin{eqnarray}
z^\ast=\frac{\mu^-(\alpha)+1/2}{\mu^-(\alpha)-1/2}<0,
\label{upperonS}
\end{eqnarray}
see \eqref{bdsonS}.
Now set
\begin{equation}
r^*=\frac{|\alpha|^2d|z^\ast|}{\frac{1}{1/2-\mu^-}+|\alpha|^2d}.
\label{radiusalphanotzero}
\end{equation}
\begin{theorem}{\rm Separation of spectra and radius of convergence for $\alpha\in Y^\star$, $\alpha\not=0$.}\\
\label{separationandraduus-alphanotzero}
The following properties  hold for inclusions with domains $D$ that satisfy \eqref{upperonS}:
\begin{enumerate}
\item If $|z|<r^*$ then $\Gamma$ lies in the resolvent of both $A^\alpha(0)$ and $A^\alpha(z)$ and thus separates the spectrum of $A^\alpha(z)$ into two parts given by the component of spectrum of $A^\alpha(z)$ inside $\Gamma$ denoted by $\Sigma'(z)$ and the component exterior to $\Gamma$  denoted by $\Sigma''(z)$. The invariant subspace of $A^\alpha(z)$ associated with $\Sigma'(z)$ is denoted by $M'(z)$ with $M'(z)=P(z)L^2_{\#}(\alpha,Y)$.

\item The projection $P(z)$ is holomorphic for $|z|<r^*$ and $P(z)$ is given by
\begin{eqnarray}
P(z)=\frac{-1}{2\pi i}\oint_\Gamma R(\zeta,z)\,d\zeta.
\label{formula}
\end{eqnarray}
\item The spaces $M'(z)$ and $M'(0)$ are isomorphic for $|z|<r^*$.
\item The power series \eqref{foureleven} converges uniformly for $z\in\mathbb{C}$ inside $|z|<r^*$.

\end{enumerate}
\end{theorem}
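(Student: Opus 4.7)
The plan is to prove all four parts at once by reducing to a single uniform estimate on the resolvent $R(\zeta,z)=(A^\alpha(z)-\zeta)^{-1}$ along the contour $\Gamma$. The second resolvent identity
$$
R(\zeta,z)=R(\zeta,0)\bigl[I+(A^\alpha(z)-A^\alpha(0))R(\zeta,0)\bigr]^{-1}
$$
expresses $R(\zeta,z)$ as a Neumann series whenever $\|(A^\alpha(z)-A^\alpha(0))R(\zeta,0)\|_{L^2\to L^2}<1$; once this bound holds uniformly for $\zeta\in\Gamma$, parts (1)--(4) follow from standard Kato-type arguments: (1) from $\Gamma$ lying in the resolvent set of $A^\alpha(z)$; (2) and (4) from the joint holomorphicity of $R(\zeta,z)$ in $z$; and (3) from the rank-preserving property of a holomorphic family of projections, which admits a Kato transformation function $U(z)$ satisfying $U(z)P(0)U(z)^{-1}=P(z)$.

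Two quantitative ingredients are required. First, $A^\alpha(0)=P_2(-\Delta_\alpha)^{-1}$ is self-adjoint and compact on $L^2_\#(\alpha,Y)$, which I would verify using the identity $(f,g)_{L^2}=\langle(-\Delta_\alpha)^{-1}f,g\rangle$ together with the $H^1$-self-adjointness of $(T_k^\alpha)^{-1}$. The spectral theorem then yields $\|R(\zeta,0)\|_{L^2\to L^2}\le 1/d$ on $\Gamma$. Second, writing
$$
A^\alpha(z)-A^\alpha(0)=Q(z)(-\Delta_\alpha)^{-1},\qquad Q(z)=zP_1+\sum_{-1/2<\mu_i<1/2}\frac{z\,P_{\mu_i}}{(1/2+\mu_i)+z(1/2-\mu_i)},
$$
and noting that $\{P_1,P_2,P_{\mu_i}\}$ is a mutually orthogonal family summing to the identity in $H^1_\#(\alpha,Y)$, the $H^1$-operator norm of $Q(z)$ equals the supremum of the moduli of its scalar coefficients. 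Factoring the denominator as $(1/2+\mu_i)(1+z/|z_i|)$, applying $|z_i|\ge|z^*|$ and $\mu_i\ge\mu^-$, and invoking the algebraic identity $1/2+\mu^-=(1/2-\mu^-)|z^*|$ yields
$$
\|Q(z)\|_{H^1\to H^1}\le \frac{|z|}{(1/2-\mu^-)(|z^*|-|z|)}\qquad\text{for }|z|<|z^*|.
$$
Combined with the Poincar\'e inequality $\|(-\Delta_\alpha)^{-1}\|_{L^2\to L^2}\le 1/|\alpha|^2$ for $\alpha$-quasi-periodic functions, this gives
$$
\|A^\alpha(z)-A^\alpha(0)\|_{L^2\to L^2}\le \frac{|z|}{|\alpha|^2(1/2-\mu^-)(|z^*|-|z|)}.
$$
Setting the product of this bound and $1/d$ strictly less than $1$ and solving for $|z|$ returns exactly the condition $|z|<r^*$.

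With convergence of the Neumann series established on $\Gamma$, part (1) is immediate. For (2), differentiation under the contour integral $P(z)=-(2\pi i)^{-1}\oint_\Gamma R(\zeta,z)\,d\zeta$ establishes holomorphy of $P(z)$. For (3), the rank of $P(z)$ is integer-valued, continuous, and hence constant on the connected disk $|z|<r^*$, and Kato's transformation function supplies the isomorphism $M'(0)\cong M'(z)$. For (4), substituting the Neumann expansion of $R(\zeta,z)$ into the weighted-mean formula \eqref{fourten} and interchanging sum with trace and contour integral (justified by uniform convergence on $\Gamma$) reproduces the series \eqref{foureleven}--\eqref{fourtwelve} and establishes its uniform convergence on $|z|<r^*$. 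The main technical obstacle is the perturbation estimate above: the scalar coefficients of $Q(z)$ would blow up near $z=0$ if $\mu_i$ were allowed to approach $-1/2$, so it is essential to invoke the hypothesis $\mu^->-1/2$ and the algebraic identity converting $1/(1/2+\mu^-)$ into the geometric factor $|z^*|-|z|$, which is precisely what translates the spectral assumption into the explicit form of $r^*$.
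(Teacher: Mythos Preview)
Your proposal is correct and follows the same overall architecture as the paper's proof in Section~\ref{derivation}: both reduce everything to the single Neumann-series estimate $\|(A^\alpha(z)-A^\alpha(0))R(\zeta,0)\|<1$ on $\Gamma$, split this as $\|A^\alpha(z)-A^\alpha(0)\|\cdot d^{-1}$, control the perturbation via the $H^1$-spectral decomposition of $(T^\alpha_k)^{-1}-P_2$, and then read off parts (1)--(3) from standard Kato perturbation theory.

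The one genuine difference is how the supremum of the scalar coefficients $|z|\,|(1/2+\mu_i)+z(1/2-\mu_i)|^{-1}$ is bounded. The paper replaces the modulus by its real part, studies $g(u,x)=\bigl((1/2+x)+u(1/2-x)\bigr)^{-2}$ with $u=Re(z)$, computes $\partial_x g$ to show $g(u,\cdot)$ is decreasing on $[\mu^-,\mu^+]$ for $u\in[z^*,1]$ (Lemma~\ref{identifyu}), and then evaluates at $x=\mu^-$, $u=-|z|$. Your factorization $(1/2+\mu_i)+z(1/2-\mu_i)=(1/2+\mu_i)(1+z/|z_i|)$ together with $|1+z/|z_i||\ge 1-|z|/|z^*|$, $\mu_i\ge\mu^-$, and the identity $1/2+\mu^-=(1/2-\mu^-)|z^*|$ reaches the identical bound in one line and is cleaner; the paper's calculus detour is unnecessary.

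One small imprecision: you cannot simply multiply $\|Q(z)\|_{H^1\to H^1}$ by $\|(-\Delta_\alpha)^{-1}\|_{L^2\to L^2}$, since $Q(z)$ acts on $H^1$, not $L^2$. The correct chain (as the paper does) uses Poincar\'e twice, $\|(-\Delta_\alpha)^{-1}\|_{L^2\to H^1}\le |\alpha|^{-1}$ and the embedding $\|\,\cdot\,\|_{L^2}\le|\alpha|^{-1}\|\,\cdot\,\|_{H^1}$, which together still give the factor $|\alpha|^{-2}$ you need.
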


Suppose now $\alpha=0$. Recall from Theorem \ref{equiv2} that the limit spectrum for $A^0(0)$ is
$\sigma(A^0(0)) =  \{{\delta'_j}^{-1}\}_{j\in\mathbb{N}}\cup\{{\nu_j}^{-1}\}_{j\in\mathbb{N}}$. For this case take $\Gamma$ to be the closed contour in $\mathbb{C}$ containing an eigenvalue $\beta_j^0(0)$ in $\sigma(A^0(0))$ but no other element of
$\sigma(A^0(0))$ and define 
\begin{eqnarray}
\label{dforperiodic}
d=\inf_{\zeta\in\Gamma}\{\rm{dist}(\zeta,\sigma(A^0(0)))\}.
\end{eqnarray}
Suppose the lowest quasi-periodic resonance eigenvalue for the domain $D$ lies inside $-1/2<\mu^-(0)<0$ and the corresponding upper bound on $S$ is given by 
\begin{eqnarray}
z^\ast=\frac{\mu^-(0)+1/2}{\mu^-(0)-1/2}<0.
\label{upperonSzero}
\end{eqnarray}
Set
\begin{equation}
r^*=\frac{4\pi^2d|z^\ast|}{\frac{1}{1/2-\mu^-}+4\pi^2d}.
\label{radiusalphazero}
\end{equation}
\begin{theorem}{\rm Separation of spectra and radius of convergence for $\alpha=0$.}
\label{separationandraduus-alphazero}
\\
The following properties  hold for inclusions with domains $D$ that satisfy \eqref{upperonSzero}: 
\begin{enumerate}
\item If $|z|<r^*$ then $\Gamma$ lies in the resolvent of both $A^0(0)$ and $A^0(z)$ and thus separates the spectrum of $A^0(z)$ into two parts given by the component of spectrum of $A^0(z)$ inside $\Gamma$ denoted by $\Sigma'(z)$ and the component exterior to $\Gamma$  denoted by $\Sigma''(z)$. The invariant subspace of $A^0(z)$ associated with $\Sigma'(z)$ is denoted by $M'(z)$ with $M'(z)=P(z)L^2_{\#}(\alpha,Y)$.

\item The projection $P(z)$ is holomorphic for $|z|<r^*$ and $P(z)$ is given by
\begin{eqnarray}
P(z)=\frac{-1}{2\pi i}\oint_\Gamma R(\zeta,z)\,d\zeta.
\label{formula}
\end{eqnarray}
\item The spaces $M'(z)$ and $M'(0)$ are isomorphic for $|z|<r^*$.
\item The power series \eqref{foureleven} converges uniformly for $z\in\mathbb{C}$ inside $|z|<r^*$.
\end{enumerate}
\end{theorem}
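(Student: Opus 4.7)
The plan is to adapt the proof of Theorem \ref{separationandraduus-alphanotzero} to the periodic case, with the main substitution being to replace the inequality $\|(-\Delta_\alpha)^{-1}\|_{L^2 \to L^2}\leq |\alpha|^{-2}$ by the periodic analogue $\|(-\Delta_0)^{-1}\|_{L^2_\#(0,Y)\to L^2_\#(0,Y)}\leq (4\pi^2)^{-1}$. The latter is just the Poincar\'e inequality on the space of zero-mean periodic functions, since the smallest Dirichlet symbol of $-\Delta_0$ on this space is $|2\pi n|^2 = 4\pi^2$ for the first nontrivial Fourier mode. This is the sole structural reason for the substitution $|\alpha|^2 \mapsto 4\pi^2$ in the formula for $r^*$, and once this input is installed the rest of the argument is essentially identical.

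First I would decompose $A^0(z) - A^0(0) = B(z)(-\Delta_0)^{-1}$ where
\[
B(z) \;=\; z P_1 \;+\; z\sum_{-1/2 < \mu_i < 1/2} \bigl[(1/2+\mu_i) + z(1/2-\mu_i)\bigr]^{-1} P_{\mu_i}.
\]
Using the orthogonality of the spectral projections and the factorization $(1/2+\mu_i) + z(1/2-\mu_i) = (1/2-\mu_i)(z - z_i)$ with $z_i = (\mu_i+1/2)/(\mu_i-1/2)$, the reverse triangle inequality $|z - z_i| \geq |z_i| - |z| \geq |z^\ast| - |z|$ (valid when $|z| < |z^\ast|$) combined with the monotonicity of $(1/2+\mu_i) - (1/2-\mu_i)|z|$ in $\mu_i$ yields
\[
\|B(z)\|_{H^1_\#(0,Y)\to H^1_\#(0,Y)} \;\leq\; \frac{|z|}{(1/2-\mu^-)(|z^\ast| - |z|)},
\]
after verifying that this expression also dominates the $P_1$ contribution. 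Combining with $\|(-\Delta_0)^{-1}\|_{L^2\to H^1_\#(0,Y)} \leq (2\pi)^{-1}$ and the Poincar\'e embedding $\|\cdot\|_{L^2} \leq (2\pi)^{-1}\|\cdot\|_{H^1_\#(0,Y)}$ produces
\[
\|A^0(z) - A^0(0)\|_{L^2\to L^2} \;\leq\; \frac{|z|}{4\pi^2(1/2-\mu^-)(|z^\ast| - |z|)}.
\]

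Since $d = \mathrm{dist}(\Gamma,\sigma(A^0(0)))$, the resolvent bound $\|R(\zeta,0)\|_{L^2\to L^2} \leq d^{-1}$ holds on $\Gamma$, and the Neumann series in \eqref{foursix} converges uniformly for $\zeta \in \Gamma$ whenever $\|(A^0(z)-A^0(0))R(\zeta,0)\| < 1$. Substituting the estimate above and solving the resulting inequality for $|z|$ gives exactly $|z| < r^\ast$ with $r^\ast$ as in \eqref{radiusalphazero}. From this uniform convergence, item (1) follows immediately since $\Gamma$ avoids $\sigma(A^0(z))$; item (2) follows because $R(\zeta,z)$ is jointly continuous and holomorphic in $z$, so the contour integral representation of $P(z)$ is holomorphic; item (3) follows from the standard fact that a holomorphic family of projections on a connected set has constant-dimensional range and produces an isomorphism via the Sz.-Nagy--Kato transformation \cite{SzNagy,TKato1,KatoPerturb}; and item (4) follows by substituting the convergent Neumann expansion into the trace formula \eqref{fourten} to obtain \eqref{foureleven}--\eqref{fourtwelve}, which then converges absolutely and uniformly in the same disk $|z| < r^\ast$. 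The main obstacle is the operator norm bound on $A^0(z) - A^0(0)$: controlling the denominators $(1/2+\mu_i) + z(1/2-\mu_i)$ uniformly in $i$ requires the geometry-dependent lower bound $\mu^-$ from \eqref{upperonSzero}, and once this bound is in hand the remaining steps are formal consequences of Kato's analytic perturbation theory.
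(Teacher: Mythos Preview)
Your proposal is correct and follows essentially the same approach as the paper: the paper's proof of Theorem \ref{separationandraduus-alphazero} consists precisely of replacing the Poincar\'e inequality $\|u\|_{L^2}\le |\alpha|^{-1}\|u\|$ by its periodic analogue $\|u\|_{L^2}\le (2\pi)^{-1}\|u\|$ (Lemma \ref{poincalphazero}) and otherwise invoking the proof of Theorem \ref{separationandraduus-alphanotzero} verbatim. Your reverse-triangle-inequality bound on the denominators $(1/2+\mu_i)+z(1/2-\mu_i)$ is a slightly more direct route to the same estimate that the paper obtains via the analysis of $g(u,x)$ and Lemma \ref{identifyu}, but the resulting bound and the remaining Kato-theoretic deductions of items (1)--(4) coincide.
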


Next we provide an explicit representation of the integral operators appearing in the series expansion for the eigenvalue group.
\begin{theorem}{\rm Representation of integral operators in the series expansion for eigenvalues}\\
Let $P_3$ be the projection onto the orthogonal complement of $W_1\oplus W_2\oplus\rm{span}\{\Pi\}$ and let $I$ denote the identity on $L^2(\partial D)$,  then the explicit representation for
for the operators $A_n^\alpha$ in the expansion  \eqref{foureleven}, \eqref{fourtwelve} is given by
\begin{eqnarray}
&&A_1^\alpha=[S_D((\tilde{K}_D^{-\alpha})^\ast+\frac{1}{2}I)^{-1}S_D^{-1}P_3+P_1](-\Delta_\alpha)^{-1}\hbox{ \rm and}\nonumber\\
&&A_n^\alpha=S_D((\tilde{K}_D^{-\alpha})^\ast+\frac{1}{2}I)^{-1}S_D^{-1}[S_D((\tilde{K}_D^{-\alpha})^\ast-\frac{1}{2}I)((\tilde{K}_D^{-\alpha})^\ast+\frac{1}{2}I)^{-1}S_D^{-1}]^{n-1}P_3(-\Delta_\alpha)^{-1}.\nonumber\\
\label{terms}
\end{eqnarray}
\label{reptheorem1}
\end{theorem}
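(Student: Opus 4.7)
The plan is to read off the Taylor coefficients of $A^\alpha(z) = (T_k^\alpha)^{-1}(-\Delta_\alpha)^{-1}$ directly from the spectral representation \eqref{inverse}, recognize the resulting spectral sums as operator-valued functions of $T$, and finally substitute the layer-potential representation of $T$ established earlier in the paper. First I would expand each scalar rational function in \eqref{inverse} as a geometric series,
$$
\frac{z}{(1/2+\mu_i) + z(1/2-\mu_i)} \;=\; \sum_{n=1}^{\infty} (-1)^{n-1}\frac{(1/2-\mu_i)^{n-1}}{(1/2+\mu_i)^n}\, z^n,
$$
and combine this with the $zP_1 + P_2$ pieces of \eqref{inverse} to read off, term by term,
$$
A_1^\alpha = \Bigl[P_1 + \sum_{-\frac{1}{2}<\mu_i<\frac{1}{2}}(1/2+\mu_i)^{-1} P_{\mu_i}\Bigr](-\Delta_\alpha)^{-1},
$$
and for $n\geq 2$
$$
A_n^\alpha = \sum_{-\frac{1}{2}<\mu_i<\frac{1}{2}}(-1)^{n-1}\frac{(1/2-\mu_i)^{n-1}}{(1/2+\mu_i)^n}P_{\mu_i}(-\Delta_\alpha)^{-1}.
$$

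The second step is to identify these sums with functions of $T$. Since $P_3 = \sum_{-\frac{1}{2}<\mu_i<\frac{1}{2}} P_{\mu_i}$ is the orthogonal projection onto the complement of $W_1 \oplus W_2 \oplus \mathrm{span}\{\Pi\}$, the operator $T$ has spectrum strictly inside $(-\tfrac{1}{2},\tfrac{1}{2})$ on $\mathrm{Ran}(P_3)$, and consequently $T \pm \tfrac{1}{2}I$ are both invertible there. Rewriting the scalar coefficient as
$$
(-1)^{n-1}\frac{(1/2-\mu_i)^{n-1}}{(1/2+\mu_i)^n} \;=\; \frac{1}{1/2+\mu_i}\left[\frac{\mu_i-1/2}{\mu_i+1/2}\right]^{n-1}
$$
and invoking the functional calculus for $T$ on $\mathrm{Ran}(P_3)$, I would conclude
$$
A_n^\alpha \;=\; (T+\tfrac{1}{2}I)^{-1}\bigl[(T-\tfrac{1}{2}I)(T+\tfrac{1}{2}I)^{-1}\bigr]^{n-1} P_3(-\Delta_\alpha)^{-1}.
$$

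Finally I would substitute Theorem~2.4, namely $T = S_D(\tilde K_D^{-\alpha})^\ast S_D^{-1}$, which gives $T \pm \tfrac{1}{2}I = S_D\bigl((\tilde K_D^{-\alpha})^\ast \pm \tfrac{1}{2}I\bigr) S_D^{-1}$ on $W_3$. The telescoping cancellation of the alternating $S_D^{-1}S_D$ factors inside the $(n-1)$-fold product then yields precisely the formula for $A_n^\alpha$ stated in the theorem; the $n=1$ case is handled by the same substitution, with the extra $P_1$ summand coming directly from the $zP_1$ piece of \eqref{inverse}.

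The principal obstacle is organizational rather than analytical. One must keep careful track of how the three subspaces decompose $H^1_{\#}(\alpha,Y)$: the $\mu=1/2$ eigenspace $\mathrm{span}\{\Pi\}$ contributes only at first order in $z$ and is absorbed into $P_1$, and excluding it via $P_3$ is exactly what makes $(\tilde K_D^{-\alpha})^\ast + \tfrac{1}{2}I$ invertible on the image of $S_D^{-1} P_3$. Convergence of the operator series on the claimed disc of radius $r^\ast$ follows from the lower bound $\mu^-(\alpha) > -\tfrac{1}{2}$ underlying Theorems~\ref{separationandraduus-alphanotzero} and \ref{separationandraduus-alphazero}, which keeps the scalar ratios $|(1/2-\mu_i)/(1/2+\mu_i)|$ uniformly bounded and justifies termwise rearrangement of the geometric series.
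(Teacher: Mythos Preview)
Your proposal is correct and follows essentially the same route as the paper's proof in Section~\ref{leading-order}: expand the rational functions in the spectral formula \eqref{inverse} as geometric series to read off the $A_n^\alpha$, then use the spectral decomposition of $T$ on $\mathrm{Ran}(P_3)$ together with $T=S_D(\tilde K_D^{-\alpha})^\ast S_D^{-1}$ to convert the spectral sums into the stated layer-potential expressions. The only cosmetic difference is that you invoke the functional calculus for $T$ as a single step, whereas the paper writes out the identity $\sum_i (1/2+\mu_i)^{\pm 1}P_{\mu_i}P_3 = S_D((\tilde K_D^{-\alpha})^\ast+\tfrac12 I)^{\pm 1}S_D^{-1}P_3$ and its $(\mu_i-1/2)$ analogue explicitly via the orthogonality of the $P_{\mu_i}$; these are the same computation.
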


We have a corollary to Theorems \ref{separationandraduus-alphanotzero} and \ref{separationandraduus-alphazero} regarding the error incurred when only finitely many terms of the series \ref{foureleven} are calculated.
\begin{theorem}{\rm Error estimates for the eigenvalue expansion}.\\
\begin{enumerate}
\label{errorestm}
\item Let $\alpha \neq 0$, and suppose $D$, $z^*$, and $r^*$ are as in Theorem \ref{separationandraduus-alphanotzero}. Then the following error estimate for the series \eqref{foureleven} holds for $|z|<r^*$:
\begin{equation}
\left |\hat{\beta}^{\alpha}(z) - \sum \limits_{n = 0}^{p} z^n \beta^{\alpha}_n \right | \leq \frac{d|z|^{p+1}}{(r^*)^p(r^* - |z|)}.
\end{equation}
\item Let $\alpha = 0$, and suppose $D$, $z^*$, and $r^*$ are as in Theorem \ref{separationandraduus-alphazero}. Then the following error estimate for the series \eqref{foureleven} holds for $|z|<r^*$:
\begin{equation}
\left |\hat{\beta}^{0}(z) - \sum \limits_{n = 0}^{p} z^n \beta^{0}_n \right | \leq \frac{d|z|^{p+1}}{(r^*)^p(r^* - |z|)}.
\end{equation}
\end{enumerate}
\end{theorem}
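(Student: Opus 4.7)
The result follows from the analytic framework of section \ref{asymptotic} via a standard geometric-tail estimate. The structure is identical for $\alpha\neq 0$ and $\alpha=0$; only the explicit formula for $r^*$ differs, so I describe both cases together.

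\textbf{Step 1 (identify the tail).} By the respective radius-of-convergence theorem (Theorem \ref{separationandraduus-alphanotzero} or \ref{separationandraduus-alphazero}), the Neumann-series expansion \eqref{foursix},
\begin{equation*}
R(\zeta,z)=R(\zeta,0)+\sum_{n=1}^\infty z^n R_n(\zeta),
\end{equation*}
converges in operator norm uniformly for $\zeta\in\Gamma$ whenever $|z|<r^*$. Substituting this into the representation \eqref{fourten} and exchanging the contour integral with the sum recovers the series $\hat\beta^\alpha(z)=\beta_0^\alpha+\sum_{n\geq 1}z^n\beta_n^\alpha$, with $\beta_n^\alpha$ given by \eqref{fourtwelve}. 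Subtracting the partial sum leaves exactly
\begin{equation*}
\hat\beta^\alpha(z)-\sum_{n=0}^p z^n\beta_n^\alpha=\sum_{n=p+1}^\infty z^n\beta_n^\alpha.
\end{equation*}

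\textbf{Step 2 (coefficient bound).} The key inequality is $|\beta_n^\alpha|\leq d\,(r^*)^{-n}$ for every $n\geq 1$. To obtain it, choose $\Gamma$ to be the circle of radius $d$ centered at $\beta_0^\alpha$, so that $|\zeta-\beta_0^\alpha|=d$ and $|\Gamma|=2\pi d$. Since $A^\alpha(0)$ is self-adjoint, $\|R(\zeta,0)\|\leq 1/d$ on $\Gamma$. The construction of $r^*$ carried out in section \ref{derivation} (which supports Theorems \ref{separationandraduus-alphanotzero} and \ref{separationandraduus-alphazero}) supplies the companion bound
\begin{equation*}
\|(A^\alpha(z)-A^\alpha(0))R(\zeta,0)\|\leq \frac{|z|}{r^*}<1
\end{equation*}
for $\zeta\in\Gamma$ and $|z|<r^*$; indeed, this is what the expression \eqref{radiusalphanotzero} (respectively \eqref{radiusalphazero}) encodes. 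Iterating the Neumann series gives $\|R_n(\zeta)\|\leq (r^*)^{-n}\|R(\zeta,0)\|$ on $\Gamma$, and then the trace-of-rank-$m$ factor in \eqref{fourtwelve} cancels the $1/m$ in front, while $|\Gamma|/(2\pi)=d$ combines with $|\zeta-\beta_0^\alpha|\cdot\|R(\zeta,0)\|\leq 1$ to yield the claimed bound $|\beta_n^\alpha|\leq d\,(r^*)^{-n}$.

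\textbf{Step 3 (sum the geometric series).} With the coefficient bound in hand,
\begin{equation*}
\left|\hat\beta^\alpha(z)-\sum_{n=0}^p z^n\beta_n^\alpha\right|\leq d\sum_{n=p+1}^\infty\left(\frac{|z|}{r^*}\right)^n=\frac{d\,|z|^{p+1}}{(r^*)^p\,(r^*-|z|)},
\end{equation*}
which is the stated estimate in both parts of the theorem.

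\textbf{Main obstacle.} The one delicate point is Step 2: extracting precisely the factor $d$ (rather than an uncontrolled multiple $Cd$) in the coefficient bound. This depends on choosing $\Gamma$ as the sharp circle of radius $d$ about $\beta_0^\alpha$, on the sharp resolvent bound $\|R(\zeta,0)\|\leq 1/d$ afforded by self-adjointness, and on the sharp Neumann estimate built into the definition of $r^*$ in section \ref{derivation}. Once these sharp ingredients are assembled the remainder of the argument is bookkeeping.
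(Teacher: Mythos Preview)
Your overall strategy---identify the tail, bound the coefficients by $|\beta_n^\alpha|\leq d\,(r^*)^{-n}$, sum the geometric series---is exactly the paper's approach, and Steps 1 and 3 are fine.

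The gap is in Step 2. Even granting the inequality $\|(A^\alpha(z)-A^\alpha(0))R(\zeta,0)\|\leq |z|/r^*$ (which can in fact be verified from the explicit bound $B(\alpha,z)$ in section~\ref{derivation} by a convexity argument), you cannot deduce $\|R_n(\zeta)\|\leq (r^*)^{-n}\|R(\zeta,0)\|$ from it. The operator $R_n(\zeta)$ is the $z^n$-coefficient of $R(\zeta,z)$, and expanding it through the double series \eqref{foursix} brings in a sum over all compositions $k_1+\cdots+k_p=n$; bounding each $A_{k_j}^\alpha R(\zeta,0)$ separately introduces a combinatorial factor $2^{n-1}$ that spoils the estimate. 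Alternatively, applying Cauchy's inequality directly to the operator-valued series $R(\zeta,z)=\sum_n z^n R_n(\zeta)$ forces you through $\|R(\zeta,z)\|\leq \|R(\zeta,0)\|/(1-|z|/r^*)$, which blows up as $|z|\to r^*$ and again fails to give the clean $(r^*)^{-n}$ bound.

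The paper sidesteps this entirely: it applies the Cauchy inequality not to the operator $R_n(\zeta)$ but to the \emph{scalar} holomorphic function $\hat\beta^\alpha(z)-\beta_0^\alpha$ on the disk $|z|<r^*$. By part~1 of Theorem~\ref{separationandraduus-alphanotzero} (resp.~\ref{separationandraduus-alphazero}) the eigenvalue group $\Sigma'(z)$ stays inside the circle $\Gamma$ of radius $d$ about $\beta_0^\alpha$, so $|\hat\beta^\alpha(z)-\beta_0^\alpha|\leq d$ throughout, and the Cauchy coefficient estimate gives $|\beta_n^\alpha|\leq d\,(r^*)^{-n}$ immediately (this is the inequality cited from Kato, Chapter~II \S3). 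With that in hand, your Step~3 finishes the proof verbatim.
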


We summarize results in the following theorem.
\begin{theorem}
\label{maintheorem}
The Bloch eigenvalue problem \eqref{Eigen1} is defined for the coupling constant $k$ extended into
the complex plane and the operator $-\nabla\cdot(k\chi_{Y\setminus D}+\chi_D)\nabla$  with domain $H_{\#}^1(\alpha,Y)$ is holomorphic for $k\in\mathbb{C}\setminus Z$. The associated Bloch spectra is given by the eigenvalues $\lambda_j(k,\alpha)=(\beta_j^\alpha(1/k))^{-1}$, for $j\in\mathbb{N}$. For $\alpha\in Y^\star$ fixed, the eigenvalues are of finite multiplicity. Moreover for each $j$ and $\alpha\in Y^\star$, the eigenvalue group is analytic within a neighborhood of infinity
containing the disk $|k|>{r^*}^{-1}$ where $r^*$ is given by \eqref{radiusalphanotzero} for $\alpha\not=0$ and by \eqref{radiusalphazero} for $\alpha=0$.
\end{theorem}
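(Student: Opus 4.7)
The plan is to assemble Theorem \ref{maintheorem} as a compilation of results from the preceding sections, translating between the interior analysis near $z=0$ and the behavior near $k=\infty$ via the substitution $z=1/k$.

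First I would address holomorphy of the divergence-form operator. By the representation \eqref{representation}, $-\nabla\cdot(k\chi_{Y\setminus D}+\chi_D)\nabla = -\Delta_\alpha T_k^\alpha$, and Theorem \ref{T_z spectrum} expresses $T_k^\alpha$ as a linear combination, with coefficients affine in $k$, of the fixed orthogonal projections $P_1$, $P_2$, and $\{P_{\mu_i}\}$. Thus $T_k^\alpha$ is entire in $k$ as an operator on $H^1_{\#}(\alpha,Y)$, and composition with the fixed operator $-\Delta_\alpha$ preserves holomorphy. The set $Z$ defined in \eqref{invertability} is exactly the set where $T_k^\alpha$ fails to be invertible, and hence the only obstruction to holomorphy of the solution operator on $\mathbb{C}\setminus Z$.

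Next I would invoke Theorem \ref{extension} to extend the Bloch eigenvalue problem to $k\in\mathbb{C}\setminus Z$ and identify its eigenvalues as $\lambda_j(k,\alpha)=\gamma_j^{-1}(k,\alpha)$, where $\{\gamma_j(k,\alpha)\}$ is the discrete spectrum of the compact operator $B^\alpha(k)$ on $L^2_\#(\alpha,Y)$. Finite multiplicity of each $\lambda_j(k,\alpha)$ is a direct consequence of compactness. The identification $B^\alpha(k)=A^\alpha(1/k)$ recorded in Section \ref{asymptotic} then yields $\gamma_j(k,\alpha)=\beta_j^\alpha(1/k)$ and hence $\lambda_j(k,\alpha)=(\beta_j^\alpha(1/k))^{-1}$.

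For analyticity in a neighborhood of $k=\infty$, I would apply Theorems \ref{separationandraduus-alphanotzero} and \ref{separationandraduus-alphazero}. These establish that the weighted mean $\hat{\beta}_j^\alpha(z)$ of the eigenvalue group coalescing at $\beta_j^\alpha(0)$ is holomorphic on $\{|z|<r^*\}$, with $r^*$ given by \eqref{radiusalphanotzero} or \eqref{radiusalphazero} according as $\alpha\neq 0$ or $\alpha=0$. Under the biholomorphism $k=1/z$ the disk $\{|z|<r^*\}$ corresponds to the exterior disk $\{|k|>1/r^*\}$, a neighborhood of $k=\infty$. The limit values $\beta_j^\alpha(0)$ are strictly positive: by Theorem \ref{equiv1} they are reciprocals of Dirichlet eigenvalues of $-\Delta_D$ when $\alpha\neq 0$, and by Theorem \ref{equiv2} they are reciprocals of the positive numbers $\{\delta'_j\}\cup\{\nu_j\}$ when $\alpha=0$. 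In particular $\hat{\beta}_j^\alpha(0)>0$, and the associated weighted mean of the Bloch eigenvalue group, $1/\hat{\beta}_j^\alpha(1/k)$, is analytic in the corresponding neighborhood of infinity, extending to $k=\infty$ with value $1/\beta_j^\alpha(0)$.

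The main obstacle is the reciprocal step in the final paragraph: analyticity of $\hat{\beta}_j^\alpha(z)$ does not by itself produce analyticity of $1/\hat{\beta}_j^\alpha(z)$. To close this, I would choose the contour $\Gamma$ used in Theorems \ref{separationandraduus-alphanotzero}/\ref{separationandraduus-alphazero} to have radius $d<\beta_j^\alpha(0)$, so that $0$ lies strictly outside $\Gamma$. Item (1) of those theorems then guarantees that the entire cluster $\Sigma'(z)$ remains inside $\Gamma$ for $|z|<r^*$, hence bounded away from $0$; the weighted mean $\hat{\beta}_j^\alpha(z)$ consequently stays nonzero on the full disk, and taking reciprocals preserves holomorphy. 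Everything else is bookkeeping of the previously proven statements.
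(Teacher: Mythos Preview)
Your proposal is correct and matches the paper's treatment: Theorem \ref{maintheorem} is presented in the paper explicitly as a summary (``We summarize results in the following theorem''), with no separate proof, and you have correctly identified the ingredients---the spectral representation of $T_k^\alpha$ for holomorphy, Theorem \ref{extension} for the extension and finite multiplicity, the identification $B^\alpha(k)=A^\alpha(1/k)$, and Theorems \ref{separationandraduus-alphanotzero}/\ref{separationandraduus-alphazero} for analyticity of the eigenvalue group on $|z|<r^*$. Your handling of the reciprocal step (ensuring $0\notin\mathrm{int}(\Gamma)$ so that $\hat\beta_j^\alpha(z)$ stays bounded away from zero) is more explicit than anything the paper writes down; note that since the $\beta_j^\alpha(0)$ accumulate only at $0$ from above, the gap to the nearest neighbor is always strictly less than $\beta_j^\alpha(0)$, so the optimal circle $\Gamma$ of radius $d=\mathrm{gap}/2$ already excludes the origin and no shrinking of $r^*$ is needed.
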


The proofs of Theorems \ref{separationandraduus-alphanotzero}, \ref{separationandraduus-alphazero}  and \ref{errorestm} are given in section \ref{derivation}. The proof of Theorem \ref{reptheorem1} is given in section \ref{leading-order}.

\section{Radius of Convergence and Separation of Spectra for Periodic Scatterers of General Shape}
\label{radiusgeneralshape}

We start by identifying an explicit condition on the inclusion geometry that guarantees a lower bound $\mu^-$ on the quasi-periodic spectra that holds uniformly for $\alpha \in Y^*$, i.e., $-\frac{1}{2} < \mu^- \leq \mu^-(\alpha)=\min_i\{\mu_i\} \leq \frac{1}{2}$ .  

Let $D \Subset Y$ be a union of simply connected sets (inclusions) $D_i$, $i=1,\ldots,N$  with $C^2$ boundary.  Recall that, for any eigenpair $(\mu, w)$ of $T|_{W_3}$ and all $v\in H^1_{\#}(\alpha, Y)$,

\begin{equation}
\frac{1}{2} \int \limits_{Y\setminus D} \nabla w \cdot \nabla \bar{v} - \frac{1}{2} \int \limits_{D} \nabla w \cdot \nabla \bar{v} = \mu \int \limits_{Y} \nabla w \cdot \nabla \bar{v}.
\end{equation}

Adding $\frac{1}{2}\int \limits_{Y} \nabla w \cdot \nabla \bar{v}$ to both sides yields

\begin{equation}
\int \limits_{Y\setminus D} \nabla w \cdot \nabla \bar{v} = (\mu + \frac{1}{2}) \int \limits_{Y} \nabla w \cdot \nabla \bar{v}.
\end{equation}

We will show that there exists a $\rho > 0$ such that $\mu_i + \frac{1}{2} > \rho$ independent of $i \in \mathbb{N}$ and $\alpha \in Y^*$.  If such a $\rho$ exists, then clearly $\mu_i > \rho - \frac{1}{2}$ for all $i$ and $\alpha$, providing an explicit lower bound $\mu^- = \rho-\frac{1}{2}$ satisfying the desired inequality.

\begin{theorem}
\label{lowerboundrho}
Let $\mu^-(\alpha)$ be the lowest eigenvalue of $T$ in $W_3 \subset H^1_{\#}(\alpha, Y)$.  Suppose there is a $\theta >0$ such that for all $u \in W_3$ we have

\begin{equation}
\label{thetaineq}
\| \nabla u\|_{L^2(Y\setminus D)}^2 \geq \theta \| \nabla u \|_{L^2(D)}^2.
\end{equation}

Let $\rho = \min \{ \frac{1}{2}, \frac{\theta}{2} \}$.  Then $\mu^-(\alpha) + \frac{1}{2} > \rho$ for all $\alpha \in Y^*$.
\end{theorem}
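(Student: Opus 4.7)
The plan is to exploit the identity displayed just above the theorem, namely that for any eigenpair $(\mu,w)$ of $T\big|_{W_3}$ and test function $v\in H^1_\#(\alpha,Y)$,
\[
\int_{Y\setminus D}\nabla w \cdot \nabla \bar v \;=\; \bigl(\mu + \tfrac{1}{2}\bigr)\int_Y \nabla w\cdot \nabla \bar v .
\]
Testing with $v = w$ and abbreviating $X := \|\nabla w\|_{L^2(Y\setminus D)}^2$ and $Y' := \|\nabla w\|_{L^2(D)}^2$, this collapses to the Rayleigh-type formula $\mu + \tfrac{1}{2} = X/(X+Y')$. A uniform lower bound on $\mu+\tfrac{1}{2}$ is therefore equivalent to a uniform lower bound on this ratio, and $\alpha$ enters only through the quasi-periodicity class of $w$.

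Next I would invoke the hypothesis \eqref{thetaineq}, $X \geq \theta Y'$ for every $w\in W_3$, and split into the two cases dictated by $\rho=\min\{1/2,\theta/2\}$. When $\theta\leq 1$, adding $\theta X$ to both sides of $X\geq \theta Y'$ gives $(1+\theta)X\geq \theta(X+Y')$; since $1+\theta\leq 2$ this yields $\mu + \tfrac{1}{2}\geq \theta/2$. When $\theta\geq 1$, the chain $X\geq \theta Y'\geq Y'$ gives $2X\geq X+Y'$, so $\mu+\tfrac{1}{2}\geq 1/2$. Either way $\mu+\tfrac{1}{2}\geq \rho$ uniformly in the eigenpair, and since $\theta$ depends only on the inclusion geometry the bound is uniform in $\alpha\in Y^\star$.

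To upgrade this to the \emph{strict} inequality $\mu^-(\alpha)+\tfrac{1}{2}>\rho$, I would sharpen the case split. A nonzero $w\in W_3$ has $X+Y'>0$, and by \eqref{thetaineq} either $Y'>0$ (in which case $X\geq \theta Y'>0$) or $Y'=0$ (in which case $\mu+\tfrac{1}{2}=1>\rho$ trivially). Keeping both factors strict in the algebra above promotes the estimate to $\mu+\tfrac{1}{2}\geq \theta/(1+\theta)$, and one checks directly that $\theta/(1+\theta)>\min\{1/2,\theta/2\}$ for every $\theta\neq 1$. The main obstacle is pinning down strictness at the borderline $\theta=1$, which I expect to handle by noting that equality along an eigenfunction would force $X = Y' = \theta Y'$, i.e.\ equality in \eqref{thetaineq} for that specific $w$; this rigid equality case can be ruled out (or simply absorbed by the open condition $\rho<1/2$) since if the Poincar\'e-type constant in \eqref{thetaineq} is exactly $1$ we may instead use a marginally smaller $\theta'<1$ for which the same hypothesis still holds, giving $\rho = \theta'/2 < 1/2$ and strictness as before.
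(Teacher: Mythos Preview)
Your direct argument via the Rayleigh identity $\mu + \tfrac{1}{2} = X/(X+Y')$ is correct and somewhat cleaner than the paper's route. The paper instead argues by contradiction: assuming $\mu^-(\alpha)+\tfrac{1}{2}<\tfrac{1}{2}$ and $\mu^-(\alpha)+\tfrac{1}{2}<\tfrac{\theta}{2}$, it normalizes the associated eigenfunction so that $X+Y'=1$, reads off $X<\tfrac{1}{2}$ from the eigenvalue identity and $Y'<\tfrac{1}{2}$ from $\tfrac{\theta}{2}>X\geq\theta Y'$, and sums to the contradiction $X+Y'<1$. Your approach is more transparent and in fact extracts the sharper closed-form bound $\mu+\tfrac{1}{2}\geq \theta/(1+\theta)$, which the contradiction argument does not display.

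Your final paragraph, however, does not secure the strict inequality at $\theta=1$. Passing to a smaller $\theta'<1$ yields $\mu^-+\tfrac{1}{2}>\theta'/2$ for each such $\theta'$, but taking the supremum over $\theta'<1$ only recovers $\mu^-+\tfrac{1}{2}\geq \tfrac{1}{2}=\rho$, not the strict bound. On this point you are no worse off than the paper: its contradiction hypothesis uses \emph{strict} inequalities, so its proof likewise establishes only $\mu^-(\alpha)+\tfrac{1}{2}\geq\rho$. Since the downstream applications (Corollary~\ref{theta} and the formulas for $r^*$) require only $\mu^->-\tfrac{1}{2}$, i.e.\ $\rho>0$, the non-strict version that both arguments deliver is all that is actually needed.
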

\begin{proof}
We proceed by contradiction: suppose that $\mu^-(\alpha) + \frac{1}{2} < \frac{1}{2}$ and $\mu^-(\alpha) + \frac{1}{2} < \frac{\theta}{2}$.  Let $u^-$ be the normalized eigenvector of $T$ associated with $\mu^-(\alpha)$.  Then we have
\begin{equation}
\label{sevenfour}
\int \limits_{Y\setminus D} | \nabla u^-|^2 < \frac{1}{2}
\end{equation}
and
\begin{equation}
\frac{\theta}{2} > \int \limits_{Y \setminus D} | \nabla u^-|^2 \geq \theta \int \limits_{D} | \nabla u^-|^2.
\end{equation}
Thus we have
\begin{equation}
\label{sevensix}
\int \limits_{D} | \nabla u^-|^2 < \frac{1}{2}.
\end{equation}
Inequalities \eqref{sevenfour} and \eqref{sevensix} yield
$$\| \nabla u^- \|_{L^2(Y)}^2 <1.$$
But $u^-$ was normalized so that
$$\| \nabla u^- \|_{L^2(Y)}^2 = 1 \text{,}$$
completing the proof.
\end{proof}

Clearly the parameter $\theta$ is a geometric descriptor for $D$. The class of periodic distributions of inclusions for which Theorem \eqref{lowerboundrho} holds for a fixed positive value of $\theta$ is denoted by $P_\theta$ and we have the corollary given by:

\begin{corollary}
\label{theta}
For every inclusion domain $D$ belonging to $P_\theta$ Theorems \ref{separationandraduus-alphazero} through \ref{maintheorem} hold with $z^*$ replaced with $z_\theta^+$ given by
\begin{eqnarray}
z_\theta^+=\frac{\mu^-+1/2}{\mu^--1/2}<0,
\label{thetaclass}
\end{eqnarray}
where $\mu^-=\min\{\frac{1}{2},\frac{\theta}{2}\}-\frac{1}{2}$.
\end{corollary}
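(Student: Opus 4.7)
The plan is to combine the uniform geometric lower bound on $\mu^-(\alpha)$ furnished by Theorem \ref{lowerboundrho} with the earlier theorems, by passing through the M\"obius transform $f(\mu) := (\mu+1/2)/(\mu-1/2)$. First I would invoke Theorem \ref{lowerboundrho}: for any $D \in P_\theta$ the inequality \eqref{thetaineq} holds with this fixed $\theta>0$, so $\mu^-(\alpha) + 1/2 > \rho := \min\{1/2, \theta/2\}$ for every $\alpha \in Y^\star$. Setting $\mu^- := \rho - 1/2$, this is exactly the uniform bound $\mu^-(\alpha) > \mu^-$, independent of $\alpha$.

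Next I would propagate this bound to the pole set $S$. The derivative of $f$ is $-1/(\mu-1/2)^2 < 0$ on $(-1/2,1/2)$, so $f$ is strictly decreasing. Combining this monotonicity with \eqref{bdsonS} gives, for every pole $z_i \in S$,
$$z_i \;\leq\; z^*(\alpha) \;=\; f(\mu^-(\alpha)) \;<\; f(\mu^-) \;=\; z_\theta^+ \;<\; 0,$$
uniformly in $\alpha \in Y^\star$. This is precisely the uniform upper bound on $S$ asserted in the corollary, and it automatically verifies the hypotheses \eqref{upperonS} and \eqref{upperonSzero} with the common constant $z_\theta^+$ in place of $z^*$.

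With this uniform bound in hand, I would substitute $z_\theta^+$ for $z^*$ and the uniform $\mu^-$ for $\mu^-(\alpha)$ throughout the hypotheses and conclusions of Theorems \ref{separationandraduus-alphanotzero}, \ref{separationandraduus-alphazero}, \ref{reptheorem1}, \ref{errorestm} and \ref{maintheorem}. Since $|z_\theta^+| \le |z^*(\alpha)|$ and $1/(1/2-\mu^-) \ge 1/(1/2 - \mu^-(\alpha))$, the radii produced by \eqref{radiusalphanotzero} and \eqref{radiusalphazero} with $z_\theta^+$ and the uniform $\mu^-$ are pointwise dominated by the corresponding $\alpha$-dependent radii, and in particular remain strictly positive and uniform in $\alpha$. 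Because the proofs of those earlier theorems use only (i) the existence of a negative upper bound on $S$ bounded away from $0$ and (ii) a lower bound $\mu^- > -1/2$, they transfer verbatim under these substitutions.

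The content here is essentially book-keeping atop Theorem \ref{lowerboundrho}; the only analytic input beyond the quoted theorems is the elementary monotonicity of $f$. The one step I would check most carefully is the simultaneous replacement of $z^*$ and $\mu^-(\alpha)$ in the formula for $r^*$: since $|z^*|$ sits in the numerator while $1/(1/2-\mu^-)$ sits in the denominator, one must verify that both substitutions individually push $r^*$ downward, which they do by the two monotone inequalities above. With that verification in place the corollary follows immediately.
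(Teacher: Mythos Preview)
Your overall strategy---pulling the uniform lower bound $\mu^- = \rho - \tfrac12$ from Theorem~\ref{lowerboundrho}, pushing it through the decreasing M\"obius map $f$, and then invoking Theorems~\ref{separationandraduus-alphanotzero}--\ref{maintheorem} on the resulting smaller disk---is exactly what the paper intends; the corollary is stated there without proof, as an immediate consequence of Theorem~\ref{lowerboundrho}.

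There is, however, one reversed inequality in your monotonicity check. From $\mu^- < \mu^-(\alpha)$ one gets $\tfrac12 - \mu^- > \tfrac12 - \mu^-(\alpha)$, hence
\[
\frac{1}{1/2-\mu^-}\;\le\;\frac{1}{1/2-\mu^-(\alpha)},
\]
the opposite of what you wrote. So the two substitutions do \emph{not} individually push $r^*$ downward: replacing $\mu^-(\alpha)$ by $\mu^-$ in the denominator of \eqref{radiusalphanotzero} on its own \emph{increases} $r^*$. The quickest repair is to use the coupling $|z^*|=(1/2+\mu^-)/(1/2-\mu^-)$ and rewrite
\[
r^*=\frac{|\alpha|^2 d\,(1/2+\mu^-)}{1+|\alpha|^2 d\,(1/2-\mu^-)},
\]
which is visibly increasing in $\mu^-$ (numerator up, denominator down), so passing from $\mu^-(\alpha)$ to the smaller $\mu^-$ does give a smaller radius, and your domination argument then goes through. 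Alternatively---and more in the spirit of Section~\ref{derivation}---observe that the proof there uses $\mu^-(\alpha)$ only as a lower bound on the spectrum $\{\mu_i\}$; any $\mu^-\in(-\tfrac12,\mu^-(\alpha)]$ may be substituted verbatim into that argument, yielding $z_\theta^+$ and the associated $r^*$ directly without comparing radii at all.
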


Now we introduce a wide class of inclusion shapes with $\theta>0$ that satisfy \eqref{thetaineq}.  
Consider an inclusion domain $D=\cup_{i=1}^N D_i$. Suppose we can surround each $D_i$ by a buffer layer $R_i$ so that each inclusion $D_i$ together with its buffer does not intersect with the any of the other buffered inclusions, i.e., $D_i\cup R_i\cap D_j\cup R_j=\emptyset$, $i\not=j$. The set of such inclusion domains will be called {\em buffered geometries}, see Figure \ref{plane2}.  We now denote the operator norm for the Dirichlet to Neumann map for each inclusion by $\Vert DN_i\Vert$ and the Poincare constant for each buffer layer by $C_{R_i}$ and we have the following theorem.

\begin{theorem}
The buffered geometry lies in $P_\theta$ provided
\begin{eqnarray}
\theta^{-1}=\max_i\{(1+C_{R_i})\Vert DN_i\Vert\}<\infty.
\label{setcriteria}
\end{eqnarray}
\label{pthetabuffered}
\end{theorem}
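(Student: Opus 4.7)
The plan is to verify the geometric inequality \eqref{thetaineq} inclusion-by-inclusion and then sum, using the key structural fact (coming from the decomposition of $H^1_\#(\alpha,Y)$) that any $u\in W_3$ is harmonic inside each component $D_i$. This reduces the Dirichlet energy on $D_i$ to a quadratic form in the trace $u|_{\partial D_i}$ via the Dirichlet-to-Neumann map, and the buffer $R_i$ then provides the bridge back to the energy on $Y\setminus D$.

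First I would fix an arbitrary $u\in W_3$, fix an inclusion $D_i$ with its buffer $R_i$, and record the integration-by-parts identity
\begin{equation*}
\int_{D_i}|\nabla u|^2\,dx=\int_{\partial D_i}u\,\overline{\partial_\nu u}\,ds=\langle u|_{\partial D_i},\mathrm{DN}_i(u|_{\partial D_i})\rangle_{H^{1/2},H^{-1/2}},
\end{equation*}
which is valid since $u$ is harmonic on $D_i$. Applying the $H^{1/2}(\partial D_i)$–$H^{-1/2}(\partial D_i)$ duality estimate gives
\begin{equation*}
\int_{D_i}|\nabla u|^2\,dx\le \|\mathrm{DN}_i\|\,\|u\|_{H^{1/2}(\partial D_i)}^2.
\end{equation*}
Because both sides are invariant under adding a constant to $u$ (the Dirichlet energy is, and $\mathrm{DN}_i$ annihilates constants), I may freely replace $u$ by $u-\bar u_{R_i}$, where $\bar u_{R_i}$ denotes the average of $u$ over the buffer $R_i$.

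Second, I would use the trace theorem to pass from the boundary seminorm to a volume norm on the buffer: $\|u-\bar u_{R_i}\|_{H^{1/2}(\partial D_i)}^2\le \|u-\bar u_{R_i}\|_{H^1(R_i)}^2$, and then Poincar\'e's inequality in $R_i$, with constant $C_{R_i}$, to estimate $\|u-\bar u_{R_i}\|_{L^2(R_i)}^2\le C_{R_i}\|\nabla u\|_{L^2(R_i)}^2$. Putting these together yields
\begin{equation*}
\int_{D_i}|\nabla u|^2\,dx\le (1+C_{R_i})\,\|\mathrm{DN}_i\|\,\|\nabla u\|_{L^2(R_i)}^2\le \theta^{-1}\,\|\nabla u\|_{L^2(R_i)}^2,
\end{equation*}
by the hypothesis \eqref{setcriteria}. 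Here the normalization of $\|\mathrm{DN}_i\|$ is chosen so that it absorbs the trace constant, which is the natural convention for the Dirichlet-to-Neumann operator viewed between its graph spaces.

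Third, because the buffered geometry assumption says the sets $\{D_i\cup R_i\}_{i=1}^N$ are pairwise disjoint, each $R_i\subset Y\setminus D$ and the $R_i$ themselves are disjoint. Summing the inequality above over $i$ and discarding the portion of $Y\setminus D$ outside $\bigcup_i R_i$ gives
\begin{equation*}
\|\nabla u\|_{L^2(D)}^2=\sum_{i=1}^N\int_{D_i}|\nabla u|^2\,dx\le \theta^{-1}\sum_{i=1}^N\|\nabla u\|_{L^2(R_i)}^2\le \theta^{-1}\|\nabla u\|_{L^2(Y\setminus D)}^2,
\end{equation*}
which is exactly \eqref{thetaineq}. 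I expect the only delicate step to be bookkeeping of constants: identifying the correct operator-norm convention for $\mathrm{DN}_i$ so that the trace-theorem constant does not introduce an extra factor beyond $(1+C_{R_i})\|\mathrm{DN}_i\|$ advertised in \eqref{setcriteria}. All other ingredients — harmonicity in $D_i$ (from Lemma~\ref{alphasplit}), the constant-invariance of Dirichlet energy, and disjointness of buffers — are either already established or built into the definition of a buffered geometry.
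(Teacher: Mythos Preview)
Your proof is correct and follows essentially the same route as the paper's: harmonicity of $u$ in $D_i$ yields $\int_{D_i}|\nabla u|^2=\langle u,\mathrm{DN}_i u\rangle$, the constant $\bar u_{R_i}$ is subtracted using $\int_{\partial D_i}\partial_\nu u=0$ and $\ker\mathrm{DN}_i=\{\text{constants}\}$, and then the trace inequality (with the $H^{1/2}$ norm taken as the infimum of extensions so the trace constant is $1$) together with Poincar\'e on $R_i$ gives the factor $(1+C_{R_i})\|\mathrm{DN}_i\|$. The only cosmetic difference is that the paper first states the estimate as a general extension lemma for arbitrary $w'\in H^1(R_i)$ and then specializes to $u\in W_3$ via uniqueness of the harmonic extension, whereas you work directly with $u\in W_3$ and make the summation over $i$ explicit.
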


\begin{proof}
To prove this theorem it suffices to consider one of the components $D_i$ denoted by $D$ and its buffer $R_i$ denoted by $R$. The union of inclusion and buffer is denoted by $D'=D\cup R$. We now show for any function $w' \in H^1(R)$ there is a $w \in H^1(D')$ such that
$$w(x) = w'(x) \text{,  } x \in R$$
and

\begin{equation}
\label{sevenseven}
\int \limits_D |\nabla w|^2 dx \leq \theta^{-1} \int \limits_R |\nabla w'|^2,
\end{equation}
where $\theta^{-1}=\{1+C_{R}\Vert DN\Vert\}$ and $DN$ is the Dirichlet to Neumann map for $D$.

Let $w \in H^1(D')$ such that $w = w'$ in $R$ and $\Delta w = 0$ in $D$ with boundary condition $w|_{\partial D} = w'$.  Note that since $w$ is harmonic in $D$, we have
$$\int \limits_{\partial D} \partial_\nu w d\sigma =0 \text{,}$$
where $\nu$ is the outward pointing normal vector on $\partial D$.  Thus

\begin{equation}
\begin{array}{lcl}
\int \limits_D |\nabla w|^2 & = & \int \limits_{\partial D} \partial_\nu w \bar{w} = \int \limits_{\partial D} \partial_\nu w \overline{(w - (w')^*)}\\
\\
& = & \int \limits_{\partial D} \partial_\nu w \overline{(w' - (w')^*)} \text{,}
\end{array}
\end{equation}

where $(w')^*$ is the average of $w'$ over $R$, given by

\begin{equation}
(w')^* = \frac{1}{|R|} \int \limits_R w' dx.
\end{equation}

Taking $DN$ as the Dirichlet-to-Neumann map on $H^{1/2}(\partial D)$, we have

\begin{equation}
\label{seventen}
\begin{array}{lcl}
\int \limits_{\partial D} \partial_\nu w \overline{(w' - (w')^*)} & \leq & {}_{H^{-1/2}(\partial D)} \langle DN|_{\partial D} w, w' - (w')^* \rangle_{H^{1/2}(\partial D)} \\
\\
& = & {}_{H^{-1/2}(\partial D)} \langle DN|_{\partial D} [w' - (w')^*], w' - (w')^* \rangle_{H^{1/2}(\partial D)} \\
\\
& \leq & \| DN|_{\partial D} \| \| w' - (w')^* \|_{H^{1/2}(\partial D)}^2.
\end{array}
\end{equation}
The second line of \eqref{seventen} holds since $w = w'$ on $\partial D$ and $Ker(DN)$ is simply the constant functions on $\partial D$.  Let $C_R$ be the Poincar\'{e} constant of $R$, i.e.

\begin{equation}
\| q - (q)^*\|_{L^2(R)}^2 \leq C_R \| \nabla q\|_{L^2(R)}^2
\end{equation}

for all $q \in H^1(R)$.  Then we calculate

\begin{equation}
\label{seventwelve}
\begin{array}{lcl}
\| w' - (w')^* \|_{H^{1/2}(\partial D)}^2 & \leq & \| w' - (w')^* \|_{H^{1/2}(\partial R)}^2 \\
\\
& = &\inf \limits_{v|_{\partial R} = w' - (w')^*} \|v \|^2_{H^1(R)}\\
\\
& \leq & \| w' - (w')^* \|_{H^1(R)}^2 \leq (1+C_R) \| \nabla w'\|_{L^2(R)}^2.
\end{array}
\end{equation}
Substituting the last line of \eqref{seventwelve} into the last line of \eqref{seventen} and setting $\theta^{-1} = \| DN\|(1+C_R)$, we obtain inequality \eqref{sevenseven} as desired.

Let $u \in W_3$, and set $w' = u$ in $R$.  Then the $w$ arising from the above theorem is a harmonic function in $D$ satisfying $w|_{\partial D} = u$.  Since $u$ is also harmonic in D, we have that $u = w$ in $D$ by uniqueness of solutions to Laplace's equation with Dirichlet boundary conditions, and inequality \eqref{sevenseven} becomes
\begin{equation}
\theta \int \limits_D | \nabla u|^2 \leq \int \limits_R | \nabla u |^2 \leq \int \limits_{Y\setminus D} | \nabla u |^2 \text{.}
\end{equation}

\end{proof}


\section{Radius of Convergence and Separation of Spectra  for Disks}
\label{radiusmultiplescatterers}
We now consider Bloch spectra for crystals in $\mathbb{R}^2$ with each  period cell containing an identical random distribution of $N$ disks $D_i$, $i=1,\ldots, N$ of radius $a$. We suppose that the  smallest distance separating the disks is $t_d>0$. The buffer layers $R_i$ are annuli with inner radii $a$ and outer radii $b=a+t$ where $t\leq t_d/2$ and  is chosen so that the collection of buffered disks lie within the period cell. For this case the constant $\theta$ is computed in \cite{Bruno} and is given by
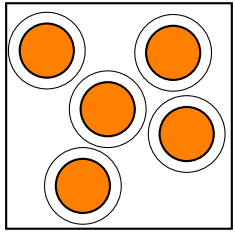
\begin{figure} 
\centering
\begin{tikzpicture}[xscale=0.6,yscale=0.6]
\draw [thick] (-2,-2) rectangle (3,3);
\draw [fill=orange,thick] (0.25,0.65) circle [radius=0.6];
\draw (0.25,0.65) circle [radius=0.85];
\draw [fill=orange,thick] (-1.1,1.95) circle [radius=0.6];
\draw (-1.1,1.95) circle [radius=0.85];
\draw [fill=orange,thick] (-0.3,-1.05) circle [radius=0.6];
\draw (-0.3,-1.05) circle [radius=0.85];
\draw [fill=orange,thick] (2.0,0.10) circle [radius=0.6];
\draw  (2.0,0.10) circle [radius=0.85];
\draw [fill=orange,thick] (1.7,1.9) circle [radius=0.6];
\draw  (1.7,1.9) circle [radius=0.85];

\end{tikzpicture} 
\caption{\bf Random buffered suspension.}
 \label{plane2}
\end{figure}

\begin{equation}
\label{thetaofb}
\theta = \frac{b^2 - a^2}{b^2 + a^2}.
\end{equation}
Since $a<b$, we have that
\begin{equation}
\label{thetabounds}
0 < \theta <1.
\end{equation}
We also note that when $D_i$ is a disc of radius $a>0$, we can recover an explicit formula for $d$ from equation \ref{dist}.  In particular, any eigenvalue $\beta_j^{\alpha}(0)$ of $-\Delta_D^{-1}$, for $\alpha\not=0$. may be written
\begin{equation}
\label{dirichletvaluedisc}
\beta_j^{\alpha}(0) = \left (\frac{\eta_{n,k}}{a} \right )^{-2} \text{,}
\end{equation}
where $\eta_{n,k}$ is the $k$th zero of the $n$th Bessel function $J_n(r)$.  Let $\tilde{\eta}$ be the minimizer of
\begin{equation}
\min \limits_{m,j \in \mathbb{N}} |(\eta_{n,k})^{-2} - (\eta_{m,j})^{-2}|.
\end{equation} 
Then we may choose $\Gamma$ from section \ref{radius} so that
\begin{equation}
d = \frac{1}{2}|(\frac{a}{\eta_{n,k}})^2 - (\frac{a}{\tilde{\eta}})^2|.
\end{equation}
We apply explicit form for $\theta$ to obtain a formula for $r^*$ in terms of $a$,$b$, $d$ given above, and $\alpha$.  Recall that $\rho$ from Theorem \ref{lowerboundrho} is given by $\rho = \min \{\frac{1}{2}, \frac{\theta}{2} \}$.  In light of inequality \eqref{thetabounds}, we have that
\begin{equation}
\label{rhoforab}
\rho = \frac{1}{2} \left ( \frac{b^2-a^2}{b^2+a^2} \right ),
\end{equation}
and we calculate the lower bound $\mu^-$:
\begin{equation}
\label{lowermuforab}
\mu^- = \rho - \frac{1}{2} = -\frac{a^2}{b^2+a^2}.
\end{equation}
Recalling that
$$|z^*|\leq|z^+| = \frac{\mu^- + 1/2}{1/2-\mu^-} \text{ , }$$
we obtain an explicit radius of convergence $r^*$ in terms of $a$, $b$, $\eta_{n,k}$, $\tilde{\eta}$, and $\alpha$ for $\alpha \neq 0$,
\begin{equation}
\label{ralphaforab}
r^* = \frac{|\alpha|^2|(\frac{a}{\eta_{n,k}})^2 - (\frac{a}{\tilde{\eta}})^2|(b^2-a^2)}{4(b^2+a^2) + |\alpha|^2|(\frac{a}{\eta_{n,k}})^2 - (\frac{a}{\tilde{\eta}})^2|(b^2+3a^2)}.
\end{equation}

When $\alpha=0$  Theorem \ref{equiv2} shows that the limit spectrum consists of a component given by the roots $\nu_{0k}$ of
\begin{eqnarray}
1=\nu\sum_{k\in\mathbb{N}}\frac{a_{0k}^2}{\nu-(\eta_{0k}/a)^2},
\label{extraspec}
\end{eqnarray}
where $a_{0k}=\int_D u_{0k}\,dx$ are averages of the rotationally symmetric normalized eigenfunctions $u_{0k}$ given by
\begin{eqnarray}
u_{0k}=J_0(r\eta_{0k}/a)/(a\sqrt{\pi}J_1(\eta_{0k})).
\label{rotintavg}
\end{eqnarray}
The other component is comprised of the eigenvalues exclusively associated with mean zero eigenfunctions. The collection of these eigenvalues is given by $\{\cup_{n\not=0,k}(\eta_{nk}/a)^2\}$
The elements $\lambda_{nk}$ of the spectrum $\sigma(A^0(0))$ are given by the set $\{\cup_{n\not=0,k}(\eta_{nk}/a)^2\}\cup\{\cup_k\nu_{0k}\}$. Now fix an element $\lambda_{nk}$ and  let $\tilde{\eta}$ be the minimizer of
\begin{equation}
\min \limits_{m,j \in \mathbb{N}} |(\lambda_{n,k})^{-1} - (\lambda_{m,j})^{-1}|.
\end{equation} 
Then as before we may choose $\Gamma$ from section \ref{radius} so that
\begin{equation}
d = \frac{1}{2}|({\lambda_{n,k}}^{-1} - {\tilde{\eta}}^{-1}|
\end{equation}
and in terms of $a$, $b$, $\lambda_{n,k}$, and $\tilde{\eta}$ for $\alpha = 0$:
\begin{equation}
\label{ralphaforabzero}
r^* = \frac{4\pi^2|(\lambda_{n,k})^{-1} - {\tilde{\eta}}^{-1}|(b^2-a^2)}{4(b^2+a^2) + 4\pi^2|({\lambda_{n,k}})^{-1} - {\tilde{\eta}}^{-1}|(b^2+3a^2)}.
\end{equation}

The collection of suspensions of $N$ buffered disks is an example of a class of buffered inclusion geometries and collecting results we have the following:

\begin{corollary}
\label{theta2}
For every suspension of buffered disks with $\theta$ given by  \eqref{thetaofb}: Theorem \ref{separationandraduus-alphanotzero} holds with $r^*$ given by \eqref{ralphaforab} for $\alpha\in Y^\star$, $\alpha\not=0$, and Theorem \ref{separationandraduus-alphazero} holds with $r^*$ given by \eqref{ralphaforabzero} for $\alpha=0$.
\end{corollary}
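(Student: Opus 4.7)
The plan is to verify that a periodic suspension of $N$ identical buffered disks fits into the framework developed in Sections \ref{radius} and \ref{radiusgeneralshape}, and then simply substitute the explicit geometric data (disk radius $a$, buffer outer radius $b$, Dirichlet spectrum of a disk, etc.) into the formulas for $r^*$ already proved in Theorems \ref{separationandraduus-alphanotzero} and \ref{separationandraduus-alphazero}.

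First I would verify that the $N$-disk configuration is a buffered geometry in the sense of Section \ref{radiusgeneralshape}. Since the pairwise minimum separation is $t_d > 0$, choosing the annular buffer $R_i = \{x : a < |x - x_i| < a + t\}$ with $t \le t_d/2$ yields disjoint sets $D_i \cup R_i$, all contained in the period cell by hypothesis. The Dirichlet-to-Neumann norm of a disk and the Poincaré constant of an annulus are both finite and explicitly computable by separation of variables, so Theorem \ref{pthetabuffered} applies. The specific value $\theta = (b^2 - a^2)/(b^2 + a^2)$ stated in \eqref{thetaofb} is taken from \cite{Bruno}; in particular $\theta^{-1} = \|DN\|(1 + C_R) < \infty$, so the suspension lies in $P_\theta$.

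Next I would invoke Corollary \ref{theta}: for any inclusion domain in $P_\theta$, Theorems \ref{separationandraduus-alphanotzero} and \ref{separationandraduus-alphazero} hold with $z^*$ replaced by $z_\theta^+$ of \eqref{thetaclass}. Since $0 < \theta < 1$ here (inequality \eqref{thetabounds}), the minimum $\rho = \min\{1/2, \theta/2\}$ equals $\theta/2$, giving the explicit value $\rho = \tfrac{1}{2}(b^2-a^2)/(b^2+a^2)$ as in \eqref{rhoforab}. The resulting lower bound $\mu^- = \rho - 1/2 = -a^2/(a^2+b^2)$ from \eqref{lowermuforab} plugged into \eqref{thetaclass} produces the upper bound $|z^+| = (\mu^- + 1/2)/(1/2 - \mu^-) = (b^2-a^2)/(b^2 + 3a^2)$ on the pole set.

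Finally I would compute the separation distance $d$ from \eqref{dist} and \eqref{dforperiodic} for the disk geometry. For $\alpha \ne 0$, Theorem \ref{equiv1} identifies $\sigma(A^\alpha(0))$ with the Dirichlet spectrum $\{(\eta_{n,k}/a)^{-2}\}$; taking $\Gamma$ as a circle around $(\eta_{n,k}/a)^{-2}$ of radius equal to half the distance to the nearest other element (reached at some $\tilde\eta$) gives the $d$ displayed before \eqref{ralphaforab}. Substituting this $d$ together with $1/(1/2-\mu^-) = 2(b^2+a^2)/(b^2 + 3a^2)$ and the value of $|z^*| \le |z^+|$ into \eqref{radiusalphanotzero} and simplifying yields \eqref{ralphaforab}. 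For $\alpha = 0$, Theorem \ref{equiv2} together with the rotational symmetry of the disk implies that only the $J_0$-modes contribute mean-nonzero eigenfunctions with averages $a_{0k}$ given by \eqref{rotintavg}, while every mode with angular index $n \ne 0$ is mean-zero; the limit spectrum is $\{(\eta_{n,k}/a)^2\}_{n \ne 0,k} \cup \{\nu_{0k}\}_k$, and the same argument with \eqref{radiusalphazero} in place of \eqref{radiusalphanotzero} delivers \eqref{ralphaforabzero}.

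The only genuinely non-trivial ingredient is the explicit computation of $\theta = (b^2-a^2)/(b^2+a^2)$, which requires knowing both the Dirichlet-to-Neumann operator norm on a disk and the Poincaré constant of the surrounding annulus; this is quoted from \cite{Bruno}. Every other step is a substitution into the formulas already established in the main theorems, so the proof is essentially a bookkeeping exercise once the $P_\theta$ membership is in hand.
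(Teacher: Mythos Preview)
Your proposal is correct and mirrors the paper's own treatment: the paper does not give a separate proof of Corollary \ref{theta2}, but simply states it as a summary of the computations carried out earlier in Section \ref{radiusmultiplescatterers} (verifying buffered-geometry membership, quoting $\theta=(b^2-a^2)/(b^2+a^2)$ from \cite{Bruno}, computing $\rho$, $\mu^-$, $|z^+|$, and the disk-specific $d$, and substituting into \eqref{radiusalphanotzero} and \eqref{radiusalphazero}). Your reconstruction of these steps, including the algebra leading to \eqref{ralphaforab} and \eqref{ralphaforabzero}, matches the paper's derivation exactly.
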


\section{Layer Potential Representation of Operators in Power Series}
\label{leading-order}

In this section we identify explicit formulas for the operators $A^\alpha_n$ appearing in the power series  \eqref{fourtwelve}. It is shown that $A^{\alpha}_n$, $n \neq 0$ can be expressed in terms of  integral operators associated with layer potentials and we establish Theorem \ref{reptheorem1}.  

Recall that $A^{\alpha}(z) - A^{\alpha}(0)$ is given by
 
 \begin{equation}
 (z P_1 + \sum_{-\frac{1}{2} < \mu_i < \frac{1}{2}}z[(1/2 + \mu_i) + z(1/2-\mu_i)) ^{-1}P_{\mu_i})(-\Delta_{\alpha}^{-1}).
 \end{equation}
 
 Factoring $(1/2+\mu_i)^{-1}$ from the second summand, we expand in power series
 
 \begin{equation}
 [(1/2 + \mu_i) + z(1/2-\mu_i)]^{-1} = (1/2 + \mu_i)^{-1}\sum \limits_{n=0}^{\infty}z^n\left (\frac{\mu_i-1/2}{\mu_i + 1/2} \right )^n \text{,}
 \end{equation}
 
and
 
  \begin{equation}
A^{\alpha}(z) - A^{\alpha}(0) = (z P_1 + \sum \limits_{n=1}^{\infty}z^{n}\sum_{-\frac{1}{2} < \mu_i < \frac{1}{2}}(\mu_i +1/2)^{-1} \left (\frac{\mu_i-1/2}{\mu_i + 1/2} \right)^{n-1}  P_{\mu_i} P_3)(-\Delta_{\alpha}^{-1}).
 \end{equation}
 
 It follows that
 \begin{equation}
 \label{AnProjections}
 A_1^{\alpha}  =  [P_1 + \sum \limits_{-\frac{1}{2} < \mu_i < \frac{1}{2}} (1/2 + \mu_i)^{-1} P_{\mu_i}P_3](-\Delta_{\alpha}^{-1})
 \end{equation}
{and}
\begin{equation}
 A_n^{\alpha}  =  [\sum \limits_{-\frac{1}{2} < \mu_i < \frac{1}{2}}(\mu_i +1/2)^{-1} \left (\frac{\mu_i-1/2}{\mu_i + 1/2} \right)^{n-1}  P_{\mu_i} P_3](-\Delta_{\alpha}^{-1}).
 \label{AnProjections2}
 \end{equation}
 
 Recall also that we have the resolution of the identity
 
 \begin{eqnarray}
 I_{H_{\#}^1(\alpha,Y)} = P_1+P_2+P_3\hbox{ with } P_3=\sum \limits_{-\frac{1}{2} < \mu_i < \frac{1}{2}} P_{\mu_i} \text{,}
 \label{residentity}
 \end{eqnarray}
 
 and the spectral representation
 
 \begin{equation}
 \begin{array}{lcl}
 \langle Tu,v \rangle & = & \langle (S_D (\tilde{K}_D^{-\alpha})^*S_D^{-1}) P_3u + \frac{1}{2} P_1u - \frac{1}{2} P_2u, v \rangle \\
 \\ 
 & = & \langle \sum \limits_{-\frac{1}{2} < \mu_i < \frac{1}{2}} \mu_i P_{\mu_i}u + \frac{1}{2} P_1u - \frac{1}{2} P_2u, v \rangle.
 \end{array}
 \end{equation}
 
 Adding $\frac{1}{2} I$ to both sides of the above equation, we obtain
 
 \begin{equation}
 \begin{array}{lcl}
 \langle (T+ \frac{1}{2}I)u,v \rangle & = & \langle (\sum \limits_{-\frac{1}{2} < \mu_i < \frac{1}{2}} (\mu_i + \frac{1}{2})P_{\mu_i} +P_1)u,v\rangle \\
  \\ 
  & = &  \langle ((S_D(\tilde{K}_D^{-\alpha})^*S_D^{-1} +\frac{1}{2}P_3)P_3 +P_1)u,v\rangle \\
  \\
  & = & \langle ((S_D((\tilde{K}_D^{-\alpha})^*+ \frac{1}{2} \tilde{I})S_D^{-1})P_3 +P_1)u,v\rangle \text{,}
  \end{array}
  \label{identproj}
  \end{equation}
 
 where $\tilde{I}$ is the identity on $H^{-1/2}(\partial D)$.  Now from \eqref{identproj} we see that

 \begin{equation}
 \begin{array}{lcl}
 \label{A1spectrumtolayers}
 \sum \limits_{-\frac{1}{2} < \mu_i < \frac{1}{2}} (\frac{1}{2} + \mu_i)^{-1} P_{\mu_i} P_3 & = & (S_D(\tilde{K}_D^{-\alpha})^*S_D^{-1} +\frac{1}{2}P_3)^{-1}P_3 \\
 \\
 & = & (S_D ((\tilde{K}_D^{-\alpha})^* + \frac{1}{2}\tilde{I})S_D^{-1})^{-1}P_3\\
 \\
 & = & (S_D ((\tilde{K}_D^{-\alpha})^* + \frac{1}{2}\tilde{I})^{-1}S_D^{-1})P_3.
 \end{array}
 \end{equation}
 
 Combining the first line of \eqref{AnProjections} and \eqref{A1spectrumtolayers}, we obtain
 
 \begin{equation}
 \label{A1Layers}
 A_1^\alpha=[S_D((\tilde{K}_D^{-\alpha})^*+\frac{1}{2}\tilde{I})^{-1}S_D^{-1}P_3+P_1](-\Delta_\alpha)^{-1}.
 \end{equation}
 
 We now turn to the higher-order terms.  By the mutual orthogonality of the projections $P_{\mu_i}$, we have that
 
\begin{equation}
\begin{array}{lcl}
\label{separateresonances}
 \sum \limits_{-\frac{1}{2} < \mu_i < \frac{1}{2}} (\mu_i +1/2)^{-1} \left (\frac{\mu_i-1/2}{\mu_i + 1/2} \right)^{n-1}  P_{\mu_i} \\
 \\
 = \left( \sum \limits_{-\frac{1}{2} < \mu_i < \frac{1}{2}} (1/2 + \mu_i)^{-1}P_{\mu_i} \right ) \left( \sum \limits_{-\frac{1}{2} < \mu_i < \frac{1}{2}} \left( \frac{\mu_i-1/2}{\mu_i +1/2} \right) P_{\mu_i} \right)^{n-1}\\
  \\
 =  \left( \sum \limits_{-\frac{1}{2} < \mu_i < \frac{1}{2}} (1/2 + \mu_i)^{-1}P_{\mu_i} \right ) \left( \sum \limits_{-\frac{1}{2} < \mu_i < \frac{1}{2}} (\mu_i-1/2) P_{\mu_i} \right)^{n-1} \left( \sum \limits_{-\frac{1}{2} < \mu_i < \frac{1}{2}} (\mu_i +1/2) P_{\mu_i} \right)^{1-n}.
 \end{array}
 \end{equation}
 
 As above, we have that
 \begin{equation}
 \begin{array}{lcl}
 \label{resonancetolayersAn}
 \sum \limits_{-\frac{1}{2} < \mu_i < \frac{1}{2}} (1/2 + \mu_i)^{-1}P_{\mu_i} P_3 & = & S_D((\tilde{K}_D^{-\alpha})^*+\frac{1}{2}\tilde{I})^{-1}S_D^{-1}P_3,\\
 \\
  \sum \limits_{-\frac{1}{2} < \mu_i < \frac{1}{2}} (1/2 + \mu_i)P_{\mu_i} P_3 & = & S_D((\tilde{K}_D^{-\alpha})^*+\frac{1}{2}\tilde{I})S_D^{-1}P_3,\\
  \\
   \sum \limits_{-\frac{1}{2} < \mu_i < \frac{1}{2}} (\mu_i - 1/2)P_{\mu_i} P_3 & = & S_D((\tilde{K}_D^{-\alpha})^*- \frac{1}{2}\tilde{I})S_D^{-1}P_3.
   \end{array}
   \end{equation}
 
 Combining \eqref{resonancetolayersAn}, \eqref{separateresonances}, and \eqref{AnProjections}, we obtain the layer-potential representation for $A_n^{\alpha}$, proving Theorem \ref{reptheorem1}:
 
 \begin{equation}
 A_n^\alpha=S_D((\tilde{K}_D^{-\alpha})^\ast+\frac{1}{2}I)^{-1}S_D^{-1}[S_D((\tilde{K}_D^{-\alpha})^\ast-\frac{1}{2}I)((\tilde{K}_D^{-\alpha})^\ast+\frac{1}{2}I)^{-1}S_D^{-1}]^{n-1}P_3(-\Delta_\alpha)^{-1}.
 \end{equation}
 
 \section{Explicit First Order Correction to the Bloch Band Structure in the High Contrast Limit}
 \label{explicitfirstorder}
In this section we develop explicit formulas for the second term in the power series 
 \begin{equation}
\label{nineone}
\beta^{\alpha}_j (z) = \beta^{\alpha}_j (0) + z\beta^{\alpha}_{j,1} + z^2\beta^{\alpha}_{j.2} + ...
\end{equation}
for simple eigenvalues.
We use the analytic representation of $A^{\alpha}(z)$ and the Cauchy Integral Formula to represent  $\beta^{\alpha}_{j,1}$
\begin{equation}
\begin{array}{lcl}
\label{ninetwo}
\beta^{\alpha}_{j,1} & = & \frac{1}{2\pi im}\text{tr} \oint_\Gamma A^{\alpha}_1 R(0,\zeta) d\zeta \\
\\ 
& = &  \frac{1}{2\pi im}\text{tr}  \left (A^{\alpha}_1 \oint_\Gamma R(0,\zeta) d\zeta \right) \\
\\
& = & \text{tr} \left (A^{\alpha}_1 P(0)\right ) = \frac{1}{m} \sum \limits_{k=1}^m \langle \varphi_k, A^{\alpha}_1 P(0)\varphi_k \rangle_{L^2_{\#} (\alpha,Y)}.
\end{array}
\end{equation}
 Here $P(0)$ is the $L^2_{\#} (\alpha,Y)$ projection onto the eigenspace corresponding to the Dirichlet eigenvalue $(\beta^{\alpha}_j (0))^{-1}$ of $-\Delta$ on $D$.  For simple eigenvalues consider the normalized eigenvector $P(0) \varphi = \varphi$
 and 
 \begin{equation}
\begin{array}{lcl}
\label{ninetwotwo}
\beta^{\alpha}_{j,1} & = \langle \varphi, A^{\alpha}_1 P(0)\varphi \rangle_{L^2_{\#} (\alpha,Y)}.
\end{array}
\end{equation}

 We  apply the integral operator representation of $A_1^{\alpha}$ to deliver an explicit formula for the first order term $\beta_{j,1}^{\alpha}$ in the series for $\beta_j^{\alpha}(z)$.  The explicit formula is given by the following theorem.
 
 \begin{theorem}
Let $\beta^{\alpha}_j(z)$ be an eigenvalue of $A^{\alpha}(z)$.  Then for $|z|<r^*$ there is a $\beta_j(0) \in \sigma(-\Delta^{-1}|_D)$ with corresponding eigenfunction $\varphi_j$ such that
\begin{equation}
\beta^{\alpha}_j (z) = \beta_j (0) +z(\beta_j(0))^2\int_{Y \setminus D} |\nabla v|^2 + z^2\beta^{\alpha}_{j.2} + ...
\end{equation}
Where $v$ takes $\alpha$-quasi periodic boundary conditions on $\partial Y$, is harmonic in $Y \setminus D$, and takes the Neumann boundary condition on $\partial D$ given by
$$\partial_n v |_{\partial D^+} = \partial_n \varphi|_{\partial D^-} \text{,}$$
where $\partial_n$ is the normal derivitave on $\partial D$ with normal vector $n$ pointing into $Y\setminus D$.
\end{theorem}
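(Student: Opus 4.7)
My plan is to evaluate \eqref{ninetwotwo} directly using the layer potential representation of $A_1^\alpha$ established in Theorem \ref{reptheorem1}. For the simple eigenvalue $\beta_j(0) = \delta_j^{-1}$ with unit $L^2$ normalized Dirichlet eigenfunction $\varphi_j$ (extended by zero outside $D$), the task reduces to computing $(\varphi_j, A_1^\alpha \varphi_j)_{L^2_\#(\alpha, Y)}$. Setting $g := (-\Delta_\alpha)^{-1}\varphi_j$, Theorem \ref{reptheorem1} gives $A_1^\alpha\varphi_j = S_D[((\tilde K_D^{-\alpha})^* + \tfrac{1}{2}I)^{-1}S_D^{-1}P_3 g] + P_1 g$, so I first need to decompose $g$ along $W_1 \oplus W_2 \oplus W_3$.

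The weak equation $-\Delta g = \varphi_j$ shows $g$ is harmonic in $Y \setminus D$, and since $-\Delta\varphi_j = \delta_j\varphi_j = \beta_j(0)^{-1}\varphi_j$, the combination $g - \beta_j(0)\varphi_j$ is harmonic in $D$. Hence $g = \beta_j(0)\varphi_j + G$ with $\beta_j(0)\varphi_j \in W_2$ and $G \in W_3$; a short integration by parts against any $W_1$ test function (using harmonicity of $g$ in $Y \setminus D$ and quasi-periodic cancellation on $\partial Y$) shows $P_1 g = 0$. Therefore $A_1^\alpha\varphi_j = \psi$, where $\psi := S_D[((\tilde K_D^{-\alpha})^* + \tfrac{1}{2}I)^{-1}S_D^{-1}G]$.

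To pin down $\psi$, apply the single-layer jump formula to $G = S_D[S_D^{-1}G]$: $S_D^{-1}G = \partial_n G|^+ - \partial_n G|^-$. Using $G = g - \beta_j(0)\varphi_j$ together with continuity of $\partial_n g$ across $\partial D$ (which follows from $g \in H^1_\#(\alpha,Y)$ satisfying $-\Delta g = \varphi_j$ as an $L^2$ identity), this collapses to $S_D^{-1}G = \beta_j(0)\partial_n\varphi_j|^-$. The jump relation $\partial_n S_D[\rho]|^+ = ((\tilde K_D^{-\alpha})^* + \tfrac{1}{2}I)\rho$ then gives $\partial_n\psi|^+ = \beta_j(0)\partial_n\varphi_j|^-$. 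Thus $\psi|_{Y\setminus D}$ is harmonic, $\alpha$-quasi-periodic, and satisfies exactly the Neumann problem solved by $\beta_j(0)v$; uniqueness is automatic for $\alpha\neq 0$, while for $\alpha=0$ the compatibility $\int_{\partial D}\partial_n\varphi_j|^- = -\delta_j\int_D\varphi_j = 0$ required by periodicity holds precisely for the Dirichlet eigenfunctions contributing to $\sigma(-\Delta_D^{-1})\cap \sigma(A^0(0))$ in Theorem \ref{equiv2}. In either case $\psi|_{Y\setminus D} = \beta_j(0) v$, and by continuity $\psi|_{\partial D} = \beta_j(0)v|_{\partial D}$.

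Finally, since $\varphi_j$ is supported in $D$ and $\psi$ is harmonic in $D$ with $\varphi_j|_{\partial D} = 0$, writing $\varphi_j = \beta_j(0)(-\Delta\varphi_j)$ and integrating by parts twice (the cross term $\int_D \nabla\varphi_j\cdot\nabla\bar\psi$ vanishes from $\varphi_j|_{\partial D}=0$ and $\Delta\psi=0$ in $D$) yields
$$
(\varphi_j, \psi)_{L^2} = -\beta_j(0)\int_{\partial D}\bar\psi\,\partial_n\varphi_j|^-\,d\sigma = -\beta_j(0)^2\int_{\partial D}\bar v\,\partial_n\varphi_j|^-\,d\sigma.
$$
A last application of Green's identity to $v$ in $Y\setminus D$, using $\Delta v = 0$, $\partial_n v|^+ = \partial_n\varphi_j|^-$, and quasi-periodic cancellation on $\partial Y$, produces $\int_{\partial D}\bar v\,\partial_n\varphi_j|^-\,d\sigma = -\int_{Y\setminus D}|\nabla v|^2$, delivering $\beta_{j,1}^\alpha = (\beta_j(0))^2 \int_{Y\setminus D}|\nabla v|^2$. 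The main obstacle will be orientation bookkeeping on $\partial D$ together with tracking of complex conjugates (since $v$ is generally complex when $\alpha \neq 0$, even though $\varphi_j$ and $\partial_n\varphi_j|^-$ are real); ensuring that the two single-layer jump relations and the two Green's identities combine with consistent signs is where one is most likely to drop a factor.
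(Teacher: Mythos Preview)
Your proof is correct and follows essentially the same route as the paper: both apply the layer-potential formula for $A_1^\alpha$ from Theorem~\ref{reptheorem1} to $\varphi_j$, identify the resulting single-layer density as $\beta_j(0)\,\partial_n\varphi_j|_{\partial D^-}$, recognize the output as $\beta_j(0)v$, and finish with the same pair of Green's identities. The only noteworthy difference is in how that density is extracted: the paper manipulates the Green's function integral for $(-\Delta_\alpha)^{-1}\varphi_j$ directly (equations \eqref{string}--\eqref{R}) to exhibit $\beta_j(0)S_D[\partial_n\varphi|_{\partial D^-}]$ explicitly, whereas you obtain it more abstractly by decomposing $g=\beta_j(0)\varphi_j+G$ with $G\in W_3$ and then reading off $S_D^{-1}G$ from the normal-derivative jump; your route is arguably cleaner and sidesteps the sign bookkeeping in the Green's-representation computation.
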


\begin{remark}
Recall from Theorem \ref{maintheorem} that the eigenvalues $\lambda_j^\alpha(k)=(\beta_j^\alpha(1/k))^{-1}$, for $j\in\mathbb{N}$. The high coupling limit expansion for $\lambda_j^\alpha(k)$ is written in terms of the expansion $\beta_j^{\alpha}(z)=\beta_j (0)+z\beta_{j,1}^{\alpha}+\cdots$ as
\begin{eqnarray}
\lambda_j^\alpha(k)&=&(\beta_j (0))^{-1}-\frac{1}{k}(\beta_j (0))^{-2}\beta_{j,1}^{\alpha}+\cdots\nonumber\\
&=&\lambda_j (0)-\frac{1}{k}\int_{Y \setminus D} |\nabla v|^2 +\cdots,
\label{reversionofseries}
\end{eqnarray}
where $\lambda_j(0)=(\beta_j (0))^{-1}$ is the $j^{th}$ Dirichlet eigenvalue for the Laplacian on $D$.
This naturally agrees with the formula for the leading order terms presented in \cite{AmmariKangLee}.
\end{remark}
 
 \begin{proof}
 Recall from the previous section that
 \begin{equation}
 \begin{array}{lcl}
 A^{\alpha}_1 & = & [S_D((\tilde{K}_D^{-\alpha})^\ast+\frac{1}{2}I)^{-1}S_D^{-1}P_3+P_1](-\Delta_\alpha)^{-1}\\
 \\
 & = & K_1^{\alpha} (-\Delta_\alpha)^{-1},
 \end{array}
 \end{equation}
 
 where $K_1^{\alpha} : = S_D((\tilde{K}_D^{-\alpha})^\ast+\frac{1}{2}I)S_D^{-1}P_3+P_1$.  Moreover,
 
 \begin{equation}
(-\Delta_\alpha)^{-1}f = -\int_Y G^{\alpha}(x,y)f(y)dy.
\end{equation}

Since $\varphi$ is a Dirichlet eigenvector of $D$ with  eigenvalue $(\beta_j(0))^{-1}$ and $\varphi=0$ in $Y\setminus D$, we have

\begin{equation}
\varphi = -\beta_j(0)\chi_D (-\Delta \varphi).
\label{compact}
\end{equation}

Now from \eqref{compact}
\begin{eqnarray}
-\Delta_\alpha^{-1}\varphi & = & \beta_j(0)\int_Y G^{\alpha}(x,y)\chi_D (\Delta_y \varphi)dy\nonumber\\
& = & \beta_j(0)\int_D G^{\alpha}(x,y)(\Delta_y \varphi)dy\nonumber\\
& = & \beta_j(0)(\int_D \nabla_y \cdot (G^{\alpha}(x,y)\nabla_y\varphi) \, dy - \int_D \nabla_yG^{\alpha}(x,y) \cdot \nabla_y\varphi \,dy)\nonumber\\
&=&\beta_j(0)(S_D[\partial_n\varphi_{|_{\scriptscriptstyle{\partial D^-}}}](x)-R(x)),
\label{string}
\end{eqnarray}
where the last equality follows from the divergence theorem and definition of the single layer potential $S_D$ and 
\begin{equation}
R(x)=\int_D \nabla_yG^{\alpha}(x,y) \cdot \nabla_y\varphi \, dy.
\label{R}
\end{equation}
Hence
\begin{eqnarray}
\label{nineeight}
A_1^{\alpha} \varphi & = & K_1^\alpha\beta_j(0)(S_D[\partial_n\varphi_{\scriptscriptstyle{\partial D^-}}](x)-R(x)).
\end{eqnarray}
Now we aply the definition of $K_1^\alpha$ and compute $P_1R(x)$ and $P_3R(x)$. Integrating by parts, we find

\begin{equation}
\begin{array}{lcl}
R(x) & = & \int_D \nabla_yG^\alpha(x,y) \cdot \nabla_y \varphi \,dy\\
\\
& = & \int_D \nabla_y \cdot (\nabla_yG^\alpha(x,y)\varphi)dy - \int_D -\Delta_yG^\alpha(x,y)\varphi \,dy\\
\\
& = & \varphi(x).
\end{array}
\end{equation}

Thus $P_1R(x) = P_3R(x) = 0$ since $\varphi\in W_1$.  Combining this result, \eqref{ninetwo}, \eqref{nineeight}, and the definition of $K^\alpha_1$ we obtain

\begin{equation}
\begin{array}{lcl}
\label{nineten}
\beta^\alpha_{j,1}=\text{tr} \left (A^{\alpha}_1 P(0)\right ) & = & \langle \varphi, A^{\alpha}_1 P(0)\varphi \rangle_{L^2_{\#} (\alpha,Y)}\\
\\
& = & \langle \varphi, \beta_j(0)S_D((\tilde{K}_D^{-\alpha})^\ast+\frac{1}{2}I)^{-1} [\partial_n\varphi_{|_{\scriptscriptstyle{\partial D^-}}}] \rangle_{L^2_{\#} (\alpha,Y)}.
\end{array}
\end{equation}

Let $v \in H^1_{\#}(\alpha,Y)$ be defined
\begin{equation}
v := S_D((\tilde{K}_D^{-\alpha})^\ast + \frac{1}{2}I)^{-1} [\partial_n\varphi_{|_{\scriptscriptstyle{\partial D^-}}}].
\label{v}
\end{equation}
Then $v$ is harmonic in $D$ and $Y\setminus D$, and
\begin{equation}
\partial_n v |_{\partial D^+} = \partial_n \varphi|_{\partial D^-} \text{.}
\label{identbdry}
\end{equation}

On applying \eqref{compact}, \eqref{v}, and \eqref{identbdry}  equation \eqref{nineten} becomes

\begin{eqnarray}
\label{ninethirteen}
\beta^\alpha_{j,1}& = & \beta_j(0) \langle \varphi, v \rangle = - (\beta_j(0))^2\int_Dv\Delta\overline{\varphi}\,dy \\
& = & - (\beta_j(0))^2(\int_{\partial D} \partial_n \varphi_{\scriptscriptstyle |_{\partial D^-}}\bar{v} d\sigma-\int_D \nabla \varphi \cdot \nabla \bar{v} )\nonumber\\
&=& - (\beta_j(0))^2(\int_{\partial D} \partial_n v_{\scriptscriptstyle |_{\partial D^+}}\bar{v} d\sigma-\int_D \nabla \varphi \cdot \nabla \bar{v} ).\nonumber
\end{eqnarray}
Last an integration by parts yields
\begin{eqnarray}
\int_D \nabla \varphi \cdot \nabla \bar{v} & = & \int_D \nabla \cdot(\nabla \bar{v} \varphi) - \Delta \bar{v} \varphi \nonumber \\ 
& = & \int_{\partial D} \partial_n \bar{v}{\scriptscriptstyle |_{\partial D^-}} \varphi d\sigma = 0.
\label{intparts}
\end{eqnarray}
Combining this result with the last line of \eqref{ninethirteen} and integrating by parts a final time reveals a representation of the second term in \eqref{nineone}
\begin{equation}
\beta_{j,1}^\alpha = (\beta_j(0))^2\int_{Y \setminus D} |\nabla v|^2 dx,
\end{equation}
and the theorem follows.
\end{proof}

\section{Derivation of the Convergence Radius and Separation of Spectra}
\label{derivation}

Here we prove Theorems \ref{separationandraduus-alphanotzero} and \ref{separationandraduus-alphazero}.  To begin, we suppose $\alpha\not=0$ and  recall that the Neumann series \eqref{foursix} and consequently \eqref{Project1} and \eqref{foureleven} converge provided that
\begin{equation}
\label{tenone}
\| (A^{\alpha}(z) - A^{\alpha}(0))R(\zeta,0) \|_{\mathcal{L}[L^2_{\#}(\alpha,Y);L^2_{\#}(\alpha,Y)]} <1.
\end{equation}
With this in mind we will compute an explicit upper bound $B(\alpha,z)$ and identify a neighborhood of the origin on the complex plane for which
\begin{equation}
\label{tenoneb}
\| (A^{\alpha}(z) - A^{\alpha}(0))R(\zeta,0) \|_{\mathcal{L}[L^2_{\#}(\alpha,Y);L^2_{\#}(\alpha,Y)]} <B(\alpha,z)<1,
\end{equation}
holds for $\zeta\in\Gamma$.
The inequality $B(\alpha,z)<1$ will be used first to derive a lower bound on the radius of convergence of the power series expansion of the eigenvalue group about $z=0$. It will then be used to provide a lower bound on the neighborhood of $z=0$ where properties 1 through 3 of Theorem \ref{separationandraduus-alphanotzero} hold.

We have the basic estimate given by
\begin{eqnarray}
\label{tenonedouble}
&&\| (A^{\alpha}(z) - A^{\alpha}(0))R(\zeta,0) \|_{\mathcal{L}[L^2_{\#}(\alpha,Y);L^2_{\#}(\alpha,Y)]}\leq \\
&&\| (A^{\alpha}(z) - A^{\alpha}(0))\|_{\mathcal{L}[L^2_{\#}(\alpha,Y);L^2_{\#}(\alpha,Y)]}\|R(\zeta,0) \|_{\mathcal{L}[L^2_{\#}(\alpha,Y);L^2_{\#}(\alpha,Y)]}.\nonumber
\end{eqnarray}
Here $\zeta\in\Gamma$ as defined in Theorem \ref{separationandraduus-alphanotzero} and elementary arguments deliver the estimate
\begin{eqnarray}
\label{tenonedoubleRz}
\|R(\zeta,0) \|_{\mathcal{L}[L^2_{\#}(\alpha,Y);L^2_{\#}(\alpha,Y)]}\leq d^{-1},
\end{eqnarray}
where $d$ is given by \eqref{dist}.

Next we estimate $\| (A^{\alpha}(z) - A^{\alpha}(0))\|_{\mathcal{L}[L^2_{\#}(\alpha,Y);L^2_{\#}(\alpha,Y)]} $. Denote the energy seminorm of  $u$ by

\begin{equation}
\| u \|= \| \nabla u \|_{L^2(Y)}.
\end{equation}
To proceed we introduce the Poincare estimate for functions belonging to $H^1_{\#}(\alpha,Y)$ for $\alpha\not=0$:
\begin{lemma}
\begin{equation}
\label{alpha-poincare}
\| u \|_{L^2(Y)} \leq |\alpha|^{-1}\|u\|.
\end{equation}
\label{poincarealpha}
\end{lemma}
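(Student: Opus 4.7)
The plan is to prove the Poincar\'e-type inequality by expanding $u$ in the natural Fourier basis adapted to $\alpha$-quasi-periodic functions. Specifically, the family $\{\phi_n(x) := e^{i(2\pi n + \alpha)\cdot x}\}_{n\in\mathbb{Z}^d}$ is an orthonormal basis for $L^2_\#(\alpha,Y)$ on the unit cell $Y=(0,1]^d$ (orthonormality follows from $\int_Y e^{i 2\pi (n-m)\cdot x}\,dx = \delta_{nm}$, and completeness is the standard Fourier result transferred from periodic to $\alpha$-quasi-periodic functions by the multiplication $u(x)=e^{i\alpha\cdot x}v(x)$ with $v$ periodic).

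Given $u\in H^1_\#(\alpha,Y)$, I would write $u=\sum_{n\in\mathbb{Z}^d}c_n\phi_n$ and apply Parseval to both $u$ and $\nabla u$, obtaining
\begin{equation*}
\|u\|_{L^2(Y)}^2 = \sum_{n\in\mathbb{Z}^d}|c_n|^2,
\qquad
\|u\|^2 = \|\nabla u\|_{L^2(Y)}^2 = \sum_{n\in\mathbb{Z}^d}|2\pi n+\alpha|^2\,|c_n|^2.
\end{equation*}
The inequality then reduces to showing $|2\pi n+\alpha|^2\ge |\alpha|^2$ uniformly in $n\in\mathbb{Z}^d$ (for $n=0$ this is equality), because then
\begin{equation*}
\|u\|^2 \ge |\alpha|^2\sum_{n}|c_n|^2 = |\alpha|^2\,\|u\|_{L^2(Y)}^2,
\end{equation*}
which is exactly \eqref{alpha-poincare}.

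To verify the coordinatewise bound $(2\pi n_j+\alpha_j)^2\ge \alpha_j^2$ for $\alpha_j\in(-\pi,\pi]$ and $n_j\in\mathbb{Z}$, I would rewrite the difference as $4\pi n_j(\pi n_j+\alpha_j)$ and split on the sign of $n_j$: for $n_j\ge 1$ one has $\pi n_j+\alpha_j\ge \pi+\alpha_j\ge 0$, and for $n_j\le -1$ one has $\pi n_j+\alpha_j\le -\pi+\alpha_j\le 0$, so the product is nonnegative in either case (and zero only when $n_j=0$). Summing over coordinates gives $|2\pi n+\alpha|^2\ge|\alpha|^2$, which is precisely the required eigenvalue lower bound on the shifted dual lattice. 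The argument has no real obstacles; the only mild point to be careful about is that the half-open convention $\alpha\in(-\pi,\pi]^d$ excludes the troublesome endpoint $\alpha_j=-\pi$ (where one would otherwise lose strict positivity in a single coordinate but still retain the non-strict inequality), and it is consistent with the definition of $Y^\star$ given at the start of the paper.
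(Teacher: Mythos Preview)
Your proof is correct. Both your argument and the paper's rest on the same spectral fact, namely that $|2\pi n+\alpha|^2\ge|\alpha|^2$ for all $n\in\mathbb{Z}^d$ when $\alpha\in(-\pi,\pi]^d$, and your coordinatewise verification of this is clean. The difference is in packaging: you expand $u$ directly in the quasi-periodic Fourier basis and apply Parseval to $u$ and $\nabla u$, whereas the paper works through the inverse Laplacian, first bounding $(-\Delta_\alpha^{-1}v,v)_{L^2}\le|\alpha|^{-2}\|v\|_{L^2}^2$ via the Green's function \eqref{Greensalpha}, then deducing $\|-\Delta_\alpha^{-1}v\|\le|\alpha|^{-1}\|v\|_{L^2}$, and finally combining this with the Cauchy--Schwarz identity $\|v\|_{L^2}^2=\langle-\Delta_\alpha^{-1}v,v\rangle\le\|-\Delta_\alpha^{-1}v\|\,\|v\|$. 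Your route is more direct for the Poincar\'e inequality itself; the paper's detour has the side benefit of producing the intermediate estimate $\|-\Delta_\alpha^{-1}v\|\le|\alpha|^{-1}\|v\|_{L^2}$ (their \eqref{energyl2}), which they reuse independently in the proof of Theorem~\ref{separationandraduus-alphanotzero}.
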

\begin{proof}
A straight forward calculation using \eqref{Greensalpha} gives the upper bound
\begin{eqnarray}
(-\Delta_\alpha^{-1}v,v)_{L^2(Y)}\leq|\alpha|^{-2}\|v\|^2_{L^2(Y)}
\label{spectralbound}
\end{eqnarray}
and we have the Cauchy inequality
\begin{eqnarray}
\|v\|^2_{L^2(Y)}=\langle-\Delta_\alpha^{-1}v,v\rangle\leq\|-\Delta_\alpha^{-1}v\|\|v\|.
\label{identl2}
\end{eqnarray}
Applying \eqref{spectralbound} we get
\begin{eqnarray}
\|-\Delta_\alpha^{-1}v\|=(\langle-\Delta_\alpha^{-1}v,-\Delta_\alpha^{-1}v\rangle)^{1/2}=((-\Delta_\alpha^{-1}v,v))^{1/2}\leq |\alpha|^{-1}\|v\|_{L^2(Y)}
\label{energyl2}
\end{eqnarray}
and the Poincare inequality follows from \eqref{identl2} and \eqref{energyl2}.
\end{proof}
For any $v \in L^2_{\#}(\alpha,Y)$, we  apply \eqref{alpha-poincare} to find
\begin{eqnarray}
\label{tenfive}
&&\| (A^{\alpha}(z) - A^{\alpha}(0)) v\|_{L^2(Y)} \nonumber\\
&&\leq  |\alpha |^{-1}\| (A^{\alpha}(z) - A^{\alpha}(0)) v\|\\
&&= |\alpha |^{-1}\| ((T^{\alpha}_k)^{-1} - (T^{\alpha}_0)^{-1})(-\Delta_\alpha)^{-1} v\|\nonumber\\
&&\leq |\alpha |^{-1}\| ((T^{\alpha}_k)^{-1} - P_2)\|_{\mathcal{L}[H^1_{\#}(\alpha,Y);H^1_{\#}(\alpha,Y)]} \|-\Delta_\alpha^{-1} v\|.\nonumber
\end{eqnarray}
Applying  \eqref{energyl2} and \eqref{tenfive} delivers the upper bound:
\begin{equation}
\label{tenten}
\| (A^{\alpha}(z) - A^{\alpha}(0)) \|_{\mathcal{L}[L^2_{\#}(\alpha,Y);L^2_{\#}(\alpha,Y)]}  \leq  |\alpha |^{-2}\| ((T^{\alpha}_k)^{-1} - P_2)\|_{\mathcal{L}[H^1_{\#}(\alpha,Y);H^1_{\#}(\alpha,Y)]}.
\end{equation}

The next step is to obtain an upper bound on $\| ((T^{\alpha}_k)^{-1} - P_2)\|_{\mathcal{L}[H^1_{\#}(\alpha,Y);H^1_{\#}(\alpha,Y)]}$. For all $v \in H^1_{\#}(\alpha, Y)$, we have

\begin{equation}
\label{teneleven}
\frac{\| ((T^{\alpha}_k)^{-1} - P_2)v\|}{\| v \|} \leq |z|\{w_0 + \sum \limits_{i=1}^{\infty}w_i |(1/2 + \mu_i) + z(1/2-\mu_i)|^{-2}\}^{1/2},
\end{equation}
where $w_0=\|P_1 v\|^2/\|v\|^2$,  $w_i=\|P_i v\|^2/\|v\|^2$, and $w_0+\sum_{i=1}^\infty w_i=1$.
So maximizing the right hand side is equivalent to calculating

\begin{equation}
\begin{array}{lcl}
\max \limits_{w_0+\sum w_i = 1} \{w_0 + \sum \limits_{i=1}^{\infty}w_i |(1/2 + \mu_i) + z(1/2-\mu_i)|^{-2}\}^{1/2}\\
\\
 =  \sup \{1, |(1/2 + \mu_i) + z(1/2-\mu_i)|^{-2}\}^{1/2}.
\end{array}
\end{equation}
Thus we maximize the function
\begin{equation}
f(x) = |\frac{1}{2} + x + z(\frac{1}{2} -x)|^{-2}
\end{equation}
over $x \in [\mu^-(\alpha), {\mu}^+(\alpha)]$ for $z$ in a neighborhood about the origin.  Let $Re(z)=u$, $Im(z)=v$ and we write
\begin{equation}
\begin{array}{lcl}
f(x) & = & |\frac{1}{2} + x + (u+iv)(\frac{1}{2} -x)|^{-2}\\
\\ & = & ((\frac{1}{2}+x+u(\frac{1}{2}-x))^2 + v^2(\frac{1}{2}-x)^2)^{-1}\\
\\
& \leq & (\frac{1}{2}+x+u(\frac{1}{2}-x))^{-2} = g(Re(z),x)\text{,}
\end{array}
\end{equation}
to get the bound
\begin{equation}
\label{tenfifteen}
\| ((T^{\alpha}_k)^{-1} - P_2)\|_{L(H^1_{\#}(\alpha,Y))} \leq |z| \sup\{1, \sup \limits_{x \in [\mu^-(\alpha), \mu^+(\alpha)]} g(u,x)\}^{1/2}.
\end{equation}

We now examine the poles of $g(u,x)$ and the sign of its partial derivative $\partial_x g(u,x)$ when $|u|<1$.  If $Re(z)=u$ is fixed, then $g(u,x) = ((\frac{1}{2} + x) + u(\frac{1}{2} - x))^{-2}$ has a pole when $(\frac{1}{2} + x) + u(\frac{1}{2} - x)=0$.
For $u$ fixed this occurs when
\begin{equation}
\hat{x}=\hat{x}(u)= \frac{1}{2} \left(\frac{1+u}{u-1}\right).
\end{equation}
On the other hand, if $x$ is fixed, $g$ has a pole at
\begin{equation}
u= \frac{\frac{1}{2} + x}{x - \frac{1}{2}}.
\end{equation}
The sign of $\partial_x g$ is determined by the formula
\begin{equation}
\label{teneighteen}
\begin{array}{lcl}
\partial_x g(u,x) & = & {N}/{D} \text{,}
\end{array}
\end{equation}
where $N=-2(1-u)^2x-(1-u^2)$ and $D := ((\frac{1}{2} + x) + u(\frac{1}{2} - x))^4 \geq 0$.  Calculation shows that $\partial_x g<0$ for $x>\hat{x}$, i.e. $g$ is decreasing on $(\hat{x},\infty)$.  Similarly, $\partial_x g>0$ for $x<\hat{x}$ and $g$ is increasing on $(-\infty, \hat{x})$.\\

Now we identify all $u=Re(z)$ for which $\hat{x}=\hat{x}(u)$ satisfies 
\begin{equation}
\hat{x} < \mu^-(\alpha) < 0 \text{.}
\end{equation}
Indeed for such $u$, the function $g(u,x)$ will be decreasing on $[\mu^-(\alpha), \mu^+(\alpha)]$, so that $g(u,\mu^-(\alpha)) \geq g(u,x)$ for all $x \in [\mu^-(\alpha), \bar{\mu}]$, yielding an upper bound for \eqref{tenfifteen}.
\begin{lemma}
\label{identifyu}
The set $U$ of $u \in \mathbb{R}$ for which $-\frac{1}{2} < \hat{x}(u) < \mu^-(\alpha) < 0$ is given by
$$U := [z^*, 1]$$
where
$$-1\leq z^*:=\frac{\mu^-(\alpha)+\frac{1}{2}}{\mu^-(\alpha)-\frac{1}{2}}<0.$$
\end{lemma}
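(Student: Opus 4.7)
The plan is to reduce the scalar inequality $\hat{x}(u) < \mu^-(\alpha)$ to a rational inequality in $u$ and read the solution set off a sign table. This is an elementary exercise, and the interval $[z^*, 1]$ will emerge directly from the two sign changes of numerator and denominator.

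First I would bring $\hat{x}(u) - \mu^-(\alpha)$ over a common denominator:
$$\hat{x}(u) - \mu^-(\alpha) \;=\; \frac{(1+2\mu^-(\alpha)) + u(1-2\mu^-(\alpha))}{2(u-1)}.$$
The numerator $N(u) := (1+2\mu^-(\alpha)) + u(1-2\mu^-(\alpha))$ is affine with positive slope $1 - 2\mu^-(\alpha) > 0$, since $\mu^-(\alpha) < 1/2$; its unique root is
$$u \;=\; -\frac{1+2\mu^-(\alpha)}{1-2\mu^-(\alpha)} \;=\; \frac{\mu^-(\alpha)+1/2}{\mu^-(\alpha)-1/2} \;=\; z^*,$$
which matches the candidate left endpoint. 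The denominator $D(u) := 2(u-1)$ has its single root at $u=1$.

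Next I would assemble the sign table. The numerator is negative for $u < z^*$ and positive for $u > z^*$; the denominator is negative for $u < 1$ and positive for $u > 1$. Consequently $N(u)/D(u) < 0$ precisely on $(z^*, 1)$, so $\hat{x}(u) < \mu^-(\alpha)$ if and only if $u \in (z^*, 1)$. Taking the closure (noting that $\hat{x}(z^*) = \mu^-(\alpha)$ at the left endpoint while $u = 1$ is a pole of $\hat{x}$) yields $U = [z^*, 1]$ as claimed.

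Finally, to confirm the bound $-1 \leq z^* < 0$, I would use the hypothesis $-1/2 < \mu^-(\alpha) < 0$: this gives $\mu^-(\alpha) + 1/2 > 0$ and $\mu^-(\alpha) - 1/2 < 0$, hence $z^* < 0$; and $|z^*| = (\mu^-(\alpha)+1/2)/(1/2-\mu^-(\alpha)) \leq 1$ reduces to $\mu^-(\alpha) \leq 0$, which holds. No genuine obstacle is anticipated; the only care needed is in tracking the signs of $N$ and $D$ under the standing constraint $\mu^-(\alpha) \in (-1/2, 0)$.
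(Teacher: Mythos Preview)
Your proof is correct and follows essentially the same elementary route as the paper: both reduce to analyzing the M\"obius map $\hat{x}(u) = \tfrac{1}{2}\tfrac{u+1}{u-1}$ to locate where $\hat{x}(u) < \mu^-(\alpha)$. The paper inverts to write $u = h(\hat{x}) = \tfrac{1/2+\hat{x}}{\hat{x}-1/2}$ and reads off the interval from monotonicity of $h$, whereas you carry out a direct sign-table analysis of the rational difference $\hat{x}(u)-\mu^-(\alpha)$; these are equivalent computations and yield the same interval $[z^*,1]$ together with the bound $-1\le z^*<0$.
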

\begin{proof}
Note first that $\mu^-(\alpha)=\inf_{i\in\mathbb{N}}\{\mu_i\}\leq 0$ follows from the fact that zero is an accumulation point for the sequence $\{\mu_i\}_{i\in\mathbb{N}}$ so it follows that $-1\leq z^*$.
Noting $\hat{x} = \hat{x}(u) = \frac{1}{2} \frac{u+1}{u-1}$, we invert and write
\begin{equation}
u = \frac{\frac{1}{2} + \hat{x}}{\hat{x} - \frac{1}{2}}.
\end{equation}
We now show that
\begin{equation}
  z^*\leq u\leq 1
\end{equation}
for $ \hat{x} \leq \mu^-(\alpha)$.  Set $h(\hat{x}) = \frac{\frac{1}{2} +\hat{x}}{\hat{x}-\frac{1}{2}}$.  Then
\begin{equation}
h'(\hat{x}) = \frac{-1}{(\hat{x}-\frac{1}{2})^2} \text{,}
\end{equation}
and so $h$ is decreasing on $(-\infty, \frac{1}{2})$.  Since $\mu^-(\alpha)<\frac{1}{2}$, $h$ attains a minimum over $(-\infty, \mu^-(\alpha)]$ at $x=\mu^-(\alpha)$.  Thus $\hat{x}(u) \leq \mu^-(\alpha)$ implies
\begin{equation}
z^*=\frac{\mu^-(\alpha)+\frac{1}{2}}{\mu^-(\alpha) - \frac{1}{2}} \leq u\leq 1
\end{equation}
as desired.
\end{proof}

Combining Lemma \ref{identifyu} with inequality \eqref{tenfifteen}, noting that $-|z|\leq Re(z) \leq |z|$ and on rearranging terms we obtain the following corollary.
\begin{corollary}
\label{boundAz}
For $|z| < |z^*|$:
\begin{equation}
\| (A^{\alpha}(z) - A^{\alpha}(0)) \|_{\mathcal{L}[L^2_{\#}(\alpha,Y);L^2_{\#}(\alpha,Y)]}  \leq |\alpha |^{-2} |z| (-|z|-z^*)^{-1}(\frac{1}{2}-\mu^-(\alpha))^{-1}.
\end{equation}
\end{corollary}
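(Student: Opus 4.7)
The plan is to assemble inequalities \eqref{tenten} and \eqref{tenfifteen} with the monotonicity information established in Lemma \ref{identifyu} and in the sign analysis around \eqref{teneighteen}. The outer $L^2\to L^2$ bound \eqref{tenten} reduces the task to an energy-norm bound on $(T_k^\alpha)^{-1}-P_2$, and \eqref{tenfifteen} further reduces this to an estimate of the shape $|z|\sup\{1,\sup_x g(u,x)\}^{1/2}$, where $u=Re(z)$ and $g(u,x)=((1/2+x)+u(1/2-x))^{-2}$. So the substance of the argument is to evaluate the $x$-supremum of $g(u,\cdot)$ uniformly over the relevant range of $u$, and then to express the resulting upper bound in terms of $z^{*}$.

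First I would observe that $|u|\leq|z|<|z^*|$ together with $z^*<0$ places $u$ in the interior of $[z^*,1]$, hence by Lemma \ref{identifyu} $\hat x(u)<\mu^-(\alpha)$. From the sign calculation for $\partial_x g$ after \eqref{teneighteen}, $g(u,\cdot)$ is then monotonically decreasing on $[\mu^-(\alpha),\mu^+(\alpha)]$, so its $x$-supremum is attained at the left endpoint and equals $g(u,\mu^-(\alpha))=|(1/2+\mu^-(\alpha))+u(1/2-\mu^-(\alpha))|^{-2}$. Since $1/2-\mu^-(\alpha)>0$, the expression inside the modulus is an affine function of $u$ with positive slope, hence minimized over $u\in[-|z|,|z|]$ at $u=-|z|$. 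Rewriting using the defining relation $-z^*=(1/2+\mu^-(\alpha))/(1/2-\mu^-(\alpha))$ yields the lower bound $(1/2-\mu^-(\alpha))(-z^*-|z|)$ for that minimum, which is strictly positive precisely because $|z|<|z^*|$. Taking reciprocals and square roots produces the pointwise estimate
\begin{equation*}
\sup_{x\in[\mu^-(\alpha),\mu^+(\alpha)]} g(u,x)^{1/2} \leq \bigl[(1/2-\mu^-(\alpha))(-|z|-z^*)\bigr]^{-1}.
\end{equation*}

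To collapse the outer $\sup\{1,\cdot\}^{1/2}$ factor, I would note that $(1/2-\mu^-(\alpha))(-z^*)=1/2+\mu^-(\alpha)<1$, and this quantity only decreases once we subtract $|z|>0$; hence the right-hand side above is itself $\geq 1$, and therefore dominates the alternative $1$ in the sup automatically. Substituting this estimate through \eqref{tenfifteen} and \eqref{tenten} then delivers exactly the claimed inequality. I do not foresee any real obstacle here: the two genuinely delicate ingredients, namely the identification of the critical value $z^*$ controlling the range of admissible $u$ and the monotonicity of $g(u,\cdot)$, were already isolated in Lemma \ref{identifyu} and in \eqref{teneighteen}, so this corollary is essentially a bookkeeping step that performs the endpoint optimization and re-expresses the resulting expression in terms of $z^*$ and $\mu^-(\alpha)$.
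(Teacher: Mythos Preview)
Your proposal is correct and follows essentially the same approach as the paper, which merely states ``Combining Lemma \ref{identifyu} with inequality \eqref{tenfifteen}, noting that $-|z|\leq Re(z)\leq |z|$ and on rearranging terms we obtain the following corollary.'' You have unpacked exactly this: the monotonicity of $g(u,\cdot)$ from the sign analysis at \eqref{teneighteen} plus Lemma \ref{identifyu} forces the supremum to $g(u,\mu^-(\alpha))$, the affine minimization over $u\in[-|z|,|z|]$ rewrites it via $z^*$, and your check that $(1/2-\mu^-(\alpha))(-|z|-z^*)<1$ correctly absorbs the outer $\sup\{1,\cdot\}$.
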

From Corollary \ref{boundAz}, \eqref{tenonedouble}, and  \eqref{tenonedoubleRz}   we easily see that
\begin{eqnarray}
\label{tentwentyseven}
&&\| (A^{\alpha}(z) - A^{\alpha}(0))R(\zeta,0) \|_{\mathcal{L}[L^2_{\#}(\alpha,Y);L^2_{\#}(\alpha,Y)]} \leq\\
&&B(\alpha,z)=|\alpha |^{-2} |z| (-|z|-z^*)^{-1}(\frac{1}{2}-\mu^-(\alpha))^{-1}d^{-1}. \nonumber
\end{eqnarray}
a straight forward calculation shows that $B(\alpha,z)<1$ for
\begin{equation}
|z| < r^*:= \frac{|\alpha|^2d|z^*|}{\frac{1}{\frac{1}{2} - \mu^-(\alpha)} + |\alpha|^2d}
\end{equation}
and property 4 of Theorem \ref{separationandraduus-alphanotzero} is established since $r^* < |z^*|$.

Now we establish properties 1 through 3 of Theorem \ref{separationandraduus-alphanotzero}.
First note that inspection of  \eqref{foursix} shows that if \eqref{tenone} holds and if $\zeta\in\mathbb{C}$ belongs to the resolvent of $A^\alpha(0)$  then it also belongs to the resolvent of $A^\alpha(z)$. Since \eqref{tenone} holds for $\zeta\in\Gamma$ and $|z|<r^*$, property 1 of Theorem \ref{separationandraduus-alphanotzero} follows.
Formula \eqref{Project1} shows that $P(z)$ is analytic in a neighborhood of $z=0$ determined by the condition that \eqref{tenone}  holds for $\zeta\in\Gamma$. The set $|z|<r^*$ lies inside this neighborhood
and property 2 of Theorem \ref{separationandraduus-alphanotzero} is proved. The isomorphism expressed in property 3 of Theorem \ref{separationandraduus-alphanotzero}  follows directly from Lemma 4.10
(\cite{KatoPerturb}, Chapter I \S 4) which is also valid in a Banach space.

The proof of \ref{separationandraduus-alphazero} proceeds along identical lines.
To prove  Theorem \ref{separationandraduus-alphazero}, we need  the  following Poincar\'{e} inequality for $H^1_{\#}(0,Y)$.
\begin{lemma}
\begin{equation}
\label{poincarealphazero}
\|v\|_{L^2_{\#}(0,Y)} \leq \frac{1}{2\pi}\|v\|.
\end{equation}
\label{poincalphazero}
\end{lemma}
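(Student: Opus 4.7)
The plan is to mirror the proof of Lemma \ref{poincarealpha} verbatim, replacing the quasi-periodic Green's function $G^{\alpha}$ with the periodic Green's function $G^0$ given by \eqref{Greensalphazero}. The essential point is that for $v \in L^2_{\#}(0,Y)$ (mean zero over $Y$), the Fourier series of $v$ contains no $n=0$ mode, so the sum in \eqref{Greensalphazero} is compatible with the action of $(-\Delta_0)^{-1}$ and there is a uniform spectral bound.

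First I would record the analogue of \eqref{spectralbound}: for any $v \in L^2_{\#}(0,Y)$,
\begin{equation*}
((-\Delta_0)^{-1} v, v)_{L^2(Y)} = \sum_{n \in \mathbb{Z}^d \setminus \{0\}} \frac{|\hat{v}(n)|^2}{|2\pi n|^2} \leq \frac{1}{(2\pi)^2} \|v\|_{L^2(Y)}^2,
\end{equation*}
where the inequality uses $|2\pi n|^2 \geq (2\pi)^2$ for $n \in \mathbb{Z}^d \setminus \{0\}$ together with Parseval's identity. This is the single step that replaces the $|\alpha|^{-2}$ estimate by $(2\pi)^{-2}$.

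Next, using the fact that $-\Delta_0$ is the Riesz map associated with the inner product $\langle \cdot, \cdot \rangle$ on $H^1_{\#}(0,Y)$, I would apply the identity $\|v\|_{L^2(Y)}^2 = \langle (-\Delta_0)^{-1} v, v \rangle$ together with the Cauchy inequality to obtain $\|v\|_{L^2(Y)}^2 \leq \|(-\Delta_0)^{-1} v\| \, \|v\|$. Then the energy norm of $(-\Delta_0)^{-1} v$ satisfies
\begin{equation*}
\|(-\Delta_0)^{-1} v\| = \bigl(((-\Delta_0)^{-1} v, v)_{L^2(Y)}\bigr)^{1/2} \leq \frac{1}{2\pi} \|v\|_{L^2(Y)},
\end{equation*}
by the spectral bound established above. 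Combining the last two inequalities gives $\|v\|_{L^2(Y)}^2 \leq \frac{1}{2\pi} \|v\|_{L^2(Y)} \|v\|$, from which the claim $\|v\|_{L^2_{\#}(0,Y)} \leq \frac{1}{2\pi}\|v\|$ follows on dividing by $\|v\|_{L^2(Y)}$.

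There is no real obstacle: the only subtlety is ensuring that $(-\Delta_0)^{-1}$ is a well-defined, self-adjoint, bounded operator on $L^2_{\#}(0,Y)$, which is immediate from the Fourier series representation since the zero mode has been excluded both in the definition of $L^2_{\#}(0,Y)$ (mean-zero functions) and in the Green's function \eqref{Greensalphazero}. The constant $\frac{1}{2\pi}$ is sharp, saturated by any first Fourier mode $v(x) = e^{i 2\pi e_j \cdot x}$.
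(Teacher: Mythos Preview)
Your proposal is correct and follows exactly the approach the paper itself indicates: the paper simply states that the inequality ``is established using \eqref{Greensalphazero} and proceeding using the same steps as in the proof of Lemma~\ref{poincarealpha},'' and you have carried out precisely those steps with the constant $|\alpha|^{-1}$ replaced by $(2\pi)^{-1}$.
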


This inequality is established using \eqref{Greensalphazero} and proceeding using the same steps as in the proof of Lemma \ref{poincarealpha}.
Using \eqref{poincarealphazero} in place of \eqref{alpha-poincare} we argue as in the proof of Theorem \ref{separationandraduus-alphanotzero}  to show that 
\begin{equation}
\| (A^{0}(z) - A^{0}(0))R(\zeta,0) \|_{\mathcal{L}[(L^2_{\#}(0,Y);L^2_\#(0,Y)]} <1
\end{equation}
holds provided $|z| < r^*$, where $r^*$ is given by \eqref{radiusalphazero}.   This establishes Theorem \ref{separationandraduus-alphazero}.

The error estimates presented in Theorem \ref{errorestm} are easily recovered from the arguments in (\cite{KatoPerturb} Chapter II, \S 3); for completeness, we restate them here.  We begin with the following application of Cauchy inequalities to the coefficients $\beta^{\alpha}_n$ of \eqref{foureleven} from (\cite{KatoPerturb} Chapter II, \S 3, pg 88):
\begin{equation}
\left | \beta^{\alpha}_n \right | \leq d(r^*)^{-n}.
\end{equation}
It follows immediately that, for $|z|<r^*$,
\begin{equation}
\left |\hat{\beta}^{\alpha}(z) - \sum \limits_{n = 0}^{p} z^n \beta^{\alpha}_n \right | \leq \sum \limits_{n=p+1}^{\infty} |z|^n |\beta^{\alpha}_n| \leq \frac{d|z|^{p+1}}{(r^*)^p(r^* - |z|)}\text{,}
\end{equation}
completing the proof.

For completeness we establish the boundedness and compactness of the operator $B^\alpha(k)$.
\begin{theorem}
\label{bounded}
The operator $B^\alpha(k): L^2_{\#}(\alpha,Y) \longrightarrow H^1_{\#}(\alpha,Y)$ is  bounded for $k\not\in Z$.
\end{theorem}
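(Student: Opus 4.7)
The plan is to decompose $B^\alpha(k) = (T^\alpha_k)^{-1}(-\Delta_\alpha)^{-1}$ as a composition and bound each factor separately in the appropriate operator norm.

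First I would show that $(-\Delta_\alpha)^{-1} : L^2_\#(\alpha,Y) \to H^1_\#(\alpha,Y)$ is bounded. This is essentially Lemmas \ref{poincarealpha} and \ref{poincalphazero}: the estimate \eqref{energyl2} gives $\|-\Delta_\alpha^{-1} v\| \leq |\alpha|^{-1}\|v\|_{L^2(Y)}$ for $\alpha \neq 0$, and \eqref{poincarealphazero} gives the analogous estimate with constant $1/(2\pi)$ when $\alpha=0$. So $(-\Delta_\alpha)^{-1}$ is bounded from $L^2_\#(\alpha,Y)$ into the Hilbert space $H^1_\#(\alpha,Y)$ equipped with the energy inner product.

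Next I would show $(T^\alpha_k)^{-1} : H^1_\#(\alpha,Y) \to H^1_\#(\alpha,Y)$ is bounded for $k \notin Z$. Using the spectral representation \eqref{inverse} and the mutual orthogonality of the projections $P_1, P_2, \{P_{\mu_i}\}$, for any $u \in H^1_\#(\alpha,Y)$ with $z=1/k$
\begin{equation*}
\|(T^\alpha_k)^{-1}u\|^2 = |z|^2\|P_1 u\|^2 + \|P_2 u\|^2 + \sum_{-\tfrac12<\mu_i<\tfrac12}\left|\frac{z}{(\tfrac12+\mu_i)+z(\tfrac12-\mu_i)}\right|^2 \|P_{\mu_i}u\|^2,
\end{equation*}
so it suffices to show that the sequence $c_i(z) := z/[(\tfrac12+\mu_i)+z(\tfrac12-\mu_i)]$ is uniformly bounded in $i$. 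The condition $k \notin Z$ is exactly $z \notin S = \{z_i\}$ where $z_i = (\mu_i+\tfrac12)/(\mu_i-\tfrac12)$ is the unique zero of the denominator, so each $c_i(z)$ is finite. Since $\mu_i \to 0$, the denominators converge to $\tfrac12(1+z)$ and $c_i(z) \to 2z/(1+z)$; hence the sequence has a finite limit, and combined with the finiteness of each term, $\sup_i |c_i(z)| < \infty$ whenever $z$ is at positive distance from $S$. This yields a constant $C(k)$ with $\|(T^\alpha_k)^{-1}u\| \leq C(k)\|u\|$.

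Composing the two bounds gives $\|B^\alpha(k)v\| \leq C(k)|\alpha|^{-1}\|v\|_{L^2(Y)}$ for $\alpha\neq 0$ (and the analogous bound with $1/(2\pi)$ for $\alpha=0$), establishing boundedness. The only subtle point is the uniform control of $|c_i(z)|$ near the accumulation point $z=-1$ of $S$: for each fixed $z \notin S$ the distance from $z$ to $S$ is strictly positive, which is all that is needed, but care must be taken because this distance degenerates as $z \to -1$. For any $k \notin Z$ the bound $C(k)$ is finite, although it may blow up as $k$ approaches the accumulation point $k = -1$ of $Z$.
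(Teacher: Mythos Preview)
Your approach is essentially identical to the paper's: both factor $B^\alpha(k)=(T_k^\alpha)^{-1}(-\Delta_\alpha)^{-1}$, bound the second factor via the estimate \eqref{energyl2} (or its $\alpha=0$ analogue), and bound the first via the spectral representation \eqref{inverse} together with the orthogonality of the projections $P_1,P_2,\{P_{\mu_i}\}$. You are in fact slightly more careful than the paper, which simply records the bound $M=\max\{1,|z|,\sup_i|(\tfrac12+\mu_i)+z(\tfrac12-\mu_i)|^{-1}\}$ without checking that the supremum is finite; your observation that $\mu_i\to 0$ forces $c_i(z)\to 2z/(1+z)$ supplies exactly that missing step (for $z\neq -1$, the edge case you flag and which the paper's proof also leaves implicit).
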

To prove the theorem for $\alpha\not=0$ we observe for $v\in L^2_\#(\alpha,Y)$ that
\begin{eqnarray}
&&\Vert B^\alpha(k) v\Vert=\vert(T_k^\alpha)^{-1}(-\Delta_\alpha)^{-1} v\Vert\leq\nonumber\\
&&\leq \| ((T^{\alpha}_k)^{-1}\|_{\mathcal{L}[H^1_{\#}(\alpha,Y);H^1_{\#}(\alpha,Y)]} \|-\Delta_\alpha^{-1} v\|\nonumber\\
&&\leq |\alpha|^{-1}\| ((T^{\alpha}_k)^{-1}\|_{\mathcal{L}[H^1_{\#}(\alpha,Y);H^1_{\#}(\alpha,Y)]} \Vert v\Vert_{L^2(Y)},
\label{operatorfield}
\end{eqnarray}
where the last inequality follows from \eqref{energyl2}. The upper estimate on $\| ((T^{\alpha}_k)^{-1}\|_{\mathcal{L}[H^1_{\#}(\alpha,Y);H^1_{\#}(\alpha,Y)]} $ is obtained from
\begin{equation}
\label{tenelevenpart2}
\frac{\| T^{\alpha}_k)^{-1}v\|}{\| v \|} \leq \{|z|\hat{w}+\tilde{w}+|\sum \limits_{i=1}^{\infty}w_i |(1/2 + \mu_i) + z(1/2-\mu_i)|^{-2}\}^{1/2},
\end{equation}
where $\hat{w}=\|P_1 v\|^2/\|v\|^2$=, $\tilde{w}=\|P_2v\|^2/\|v\|^2$, $w_i=\|P_i v\|^2/\|v\|^2$. Since $\hat{w}+\overline{w}+\sum_{i=1}^\infty w_i=1$
one recovers the upper bound
\begin{equation}
\label{tenelevenpart2}
\frac{\| T^{\alpha}_k)^{-1}v\|}{\| v \|} \leq M,
\end{equation}
where
\begin{equation}
M= \max\{1, |z|, \sup_{i} \{ |(1/2 + \mu_i) + z(1/2-\mu_i)|^{-1}\}\},
\label{summupperbound}
\end{equation}
and the proof of Theorem \ref{bounded} is complete. An identical proof can be carried out when $\alpha=0$.

\begin{remark}
The Poincare inequalities \eqref{alpha-poincare} and \eqref{poincarealphazero} together with Theorem \ref{bounded} show that
$B^\alpha(k): L^2_{\#}(\alpha,Y) \longrightarrow L^2_{\#}(\alpha,Y)$ is a bounded linear operator mapping $L^2_\#(\alpha,Y)$ into itself. The compact embedding of $H^1_\#(\alpha,Y)$ into $L^2_\#(\alpha,Y)$ shows the operator is compact on $L^2_\#(\alpha,Y)$. 
\label{compact2}
\end{remark}



\section*{Acknowledgements}
This research is supported by AFOSR MURI Grant FA9550-12-1-0489 administered through the University of New Mexico, NSF grant DMS-1211066, and NSF EPSCOR Cooperative Agreement No. EPS-1003897 with additional support from the Louisiana Board of Regents.

\bibliographystyle{plain}
\bibliography{Photonicrefs1}






\end{document}